\documentclass[11pt,reqno]{amsart}
\usepackage[letterpaper,margin=1in]{geometry}
\usepackage{amsmath,amssymb,amsthm,mathtools}
\usepackage{booktabs,array,longtable,enumitem}
\usepackage{microtype}
\usepackage{hyperref}
\usepackage{bookmark}
\usepackage{needspace}
\newcommand{\Z}{\mathbb Z}
\newcommand{\Q}{\mathbb Q}
\newcommand{\F}{\mathbb F}
\newcommand{\OO}{\mathcal O}
\newcommand{\C}[1]{C_{#1}}
\newcommand{\Sdet}[1]{S(#1)}
\DeclareMathOperator{\Norm}{N}

\theoremstyle{plain}
\newtheorem{theorem}{Theorem}[section]
\newtheorem{lemma}[theorem]{Lemma}
\newtheorem{proposition}[theorem]{Proposition}
\newtheorem{corollary}[theorem]{Corollary}
\theoremstyle{definition}

\theoremstyle{remark}

\numberwithin{equation}{section}
\setlist[itemize]{topsep=3pt,itemsep=1pt}
\setlist[enumerate]{topsep=3pt,itemsep=2pt}
\hypersetup{colorlinks=true,linkcolor=blue,citecolor=blue,urlcolor=blue,
 pdftitle={Integer group determinants for abelian groups of order 24},
 pdfauthor={Chatchawan Panraksa}}
\title[Integer group determinants of order 24]{Integer group determinants\\for abelian groups of order 24}
\author{Chatchawan Panraksa}
\address{Applied Mathematics Program, Mahidol University International College,
Salaya, Nakhon Pathom 73170, Thailand}
\email{chatchawan.pan@mahidol.ac.th}
\date{9 September 2026}
\subjclass[2020]{11C20, 11R18, 15B36, 20C05}
\keywords{Integer group determinant, circulant determinant, integral group ring,
cyclotomic norm, conductor, finite abelian group}
\begin{document}
\begin{abstract}
We determine the integer group determinant sets for
$C_{24}$, $C_2\times C_{12}$ and $C_2^2\times C_6$, the three abelian
groups of order $24$. Character factorization expresses each determinant
as a product of cyclotomic norms, whose components must satisfy integral
compatibility conditions.
The classifications are constructive and reduce each nonzero signed
integer to valuation conditions at $2$ and $3$ and a finite test on its
remaining prime factors. The tests use conductor quotients and
norm-preserving global units. Each prime contribution has a uniform
bound independent of its multiplicity, and successful tests yield
integer coefficients realizing the prescribed determinant.
The critical cyclic strata share one compatibility group. For
$C_2^2\times C_6$, a normalized rational--Eisenstein pair settles the
odd values and most even strata; target stabilizers reduce the
remaining tests. We also prove that the odd determinant set of
$C_2^2\times C_6$ is properly contained in that of $C_2\times C_{12}$,
exhibit an infinite family in the difference, and determine the signed
powers of two in all three sets. Exact arithmetic
certificates and coefficient constructors accompany the proofs.
\end{abstract}
\maketitle

\section{Introduction}\label{sec:introduction}

For a finite group $G$, its integer group determinant set is
\[
 S(G)=\left\{\det(a_{gh^{-1}})_{g,h\in G}:a_g\in\Z\right\}.
\]
When $G=\C{n}$, these are the integer circulant determinants of order
$n$. Character factorization expresses an abelian group determinant as a
product of cyclotomic norms. The factors must satisfy congruences
because they come from one integral group-ring element.

Pinner and Smyth determine the sets
for groups of order at most fourteen and several further groups
\cite{PinnerSmyth2020}. Subsequent work treats the abelian groups of
order sixteen \cite{YamaguchiAbelian16,YamaguchiCirculant16}, three
nonabelian groups of that order \cite{YamaguchiNonabelian16}, and the
groups of order eighteen \cite{PaudelPinner18}. For $\C{5}\times\C{5}$,
Panraksa \cite{Panraksa2026} proves that the $5$-divisible determinant
values are exactly $5^8\Z$, completing the classification for this
group. Prime-power circulants have been studied in
\cite{MossinghoffPinner2023}. For mixed cyclic
orders, the order-fifteen classification already requires conditions
on the prime factors of the integer to be represented
\cite{PaudelPinner15}. Product factorizations and subgroup relations
provide further tools \cite{PaudelPinner2023,YamaguchiDedekind2024}.

We determine $S(\C{24})$, $S(\C{2}\times\C{12})$ and
$S(\C{2}^2\times\C{6})$ in Theorems~\ref{cy24:main},
\ref{c2c12:classification} and \ref{c2c2c6:classification}.
The cyclic classification includes the known families of integers prime
to $6$ and multiples of $288$, given by Kaiblinger's
Theorem~4.4(i)~\cite{Kaiblinger2012} with $q=3$; its critical strata
share one obstruction group. For $\C{2}^2\times\C{6}$, a normalized
rational--Eisenstein pair settles the odd values and most even strata.
The $27$ exceptional pairs use target stabilizers to reduce the tests;
four require at most one prime move.

For $D=2^k3^bm\ne0$, with $m$ signed and $\gcd(m,6)=1$,
Table~\ref{tab:classification-guide} gives sufficient thresholds for
every such $D$ to occur, provided $b\ne1$. Valuation $b=1$ is always excluded.

\begin{table}[tb]
\centering
\caption{Compatibility groups and classification routes.}
\label{tab:classification-guide}
\begin{tabular}{@{}lccp{3.1cm}@{}}
\toprule
$G$ & Compatibility group & Threshold & Normalized-pair test\\
\midrule
$\C{24}$ & $\C{2}^2\times\C{4}$ & $k\ge8$ & Not needed\\
$\C{2}\times\C{12}$ & $\C{2}^5\times\C{4}^2$ & $k\ge22$ & Not needed\\
$\C{2}^2\times\C{6}$ & $\C{2}^{10}\times\C{4}$ & $k\ge32$ &
Odd and selected even strata\\
\bottomrule
\end{tabular}
\end{table}

The common argument allocates prime factors among character components
and uses norm-preserving global units to restore integrality.
Theorem~\ref{thm:prime-allocation} bounds each prime contribution
independently of its multiplicity, giving constructive criteria for
arbitrary signed integers. The group-specific work determines the
compatibility quotients and their local class sets.

The classifications also distinguish the two noncyclic groups in
ways that valuation restrictions alone do not detect. We prove
\[
 S_{\mathrm{odd}}(\C{2}\times\C{2}\times\C{6})
 \subsetneq S_{\mathrm{odd}}(\C{2}\times\C{12}),
\]
where $S_{\mathrm{odd}}$ denotes the odd members of a determinant set.
An explicit infinite family in the difference is
$3^4\cdot5\cdot13\cdot73^{r+1}$, $r\ge0$.
We also give the exact signed powers of two in all three sets.

Section~\ref{sec:framework} gives the framework, Sections~3--5 the
classifications, and Section~\ref{sec:comparison} their comparisons.
The appendices specify the finite data; Section~\ref{sec:computation}
gives the computational methodology and reproduction guide.


\section{Integral compatibility and prime allocation}\label{sec:framework}

Let $G$ be a finite abelian group and put $A=\Z[G]$. Multiplication by
$F=\sum_{g\in G}a_g g$ on the group basis has matrix
$(a_{gh^{-1}})_{g,h\in G}$. Its determinant is therefore the algebra norm
of $F$. Evaluating at the complex characters diagonalizes this matrix.
Grouping Galois-conjugate characters gives
\begin{equation}\label{eq:character-norm}
 \Q[G]\simeq\prod_j K_j,\qquad
 D_G(F)=\prod_j\Norm_{K_j/\Q}(F_j).
\end{equation}
Each $K_j$ is cyclotomic. Throughout, the norm of a rational component
is that component itself, with its sign. A nonzero norm from a totally
imaginary field is positive.

The character fields used here are $\Q$ and $K_d=\Q(\zeta_d)$, where $d=3,4,6,8,12,24$.
Write $O_d=\Z[\zeta_d]$. We use the following arithmetic input.

\begin{lemma}\label{lem:fields}
Every $O_d$ just listed has class number one. In each row of
Table~\ref{tab:field-units}, the displayed elements generate its full unit
group. All these units have absolute norm one. For a prime $q>3$, every
prime ideal above $q$ in any of these fields has residue degree one or two.
\end{lemma}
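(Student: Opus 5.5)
Since $\zeta_6=-\zeta_3^2$ we have $O_6=O_3$, and $\zeta_{12}$ generates $K_{12}$, so the relevant rings are $O_3,O_4,O_8,O_{12},O_{24}$ of degrees $2,2,4,4,8$. The lemma splits into three independent assertions, and I would verify each directly.

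\emph{Class number one.} For the quadratic fields $K_3=\Q(\sqrt{-3})$ and $K_4=\Q(i)$ this is classical: the Minkowski bound is below $2$. For $K_8$, $K_{12}$ and $K_{24}$ I would appeal to the known list of cyclotomic fields of class number one (Masley--Montgomery), which contains all $\Q(\zeta_d)$ with $\varphi(d)\le 8$, in particular $d=8,12,24$.

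A self-contained route, if wanted, is a Minkowski bound computation. In degree $4$ the bound is below $2$ for discriminants $256$ and $144$, so there is nothing to check. For $K_{24}$ the discriminant is $2^{16}3^4$, and the bound $\frac{8!}{8^8}\left(\frac{4}{\pi}\right)^4\sqrt{2^{16}3^4}$ is about $6.1$. It then suffices to show that the primes above $2$, $3$ and $5$ are principal. The primes above $2$ and $3$ are ramified, and a prime above $2$ or $3$ is generated by the appropriate factor $1-\zeta$ of the totally ramified part. The prime $5$ has order $2$ modulo $24$, so its primes have norm $25>6.1$ and need no check. This verification is the step I expect to be the main (if modest) obstacle; citing the literature avoids it.

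\emph{Units.} Table~\ref{tab:field-units} is not shown here, but presumably it lists $-1$ or a root of unity together with fundamental units of the maximal real subfields: none for $d=3,4$; $1+\sqrt2$ for $d=8$; $2+\sqrt3$ for $d=12$; and for $d=24$ a system for $\Q(\zeta_{24})^+=\Q(\sqrt2,\sqrt3)$, for example $1+\sqrt2$, $2+\sqrt3$, $\sqrt2+\sqrt3$ or a square root of a product of these.

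For a CM field, the Hasse unit index $Q\in\{1,2\}$ gives $[O^\times:\mu O^{+\times}]=Q$. So I would first get the real units from the known unit group of $\Q(\sqrt2,\sqrt3)$ (Kubota's method: check which products of the quadratic fundamental units are squares). Then I would decide $Q$: for $d$ a prime power $Q=1$, and for $d=12,24$ one checks whether $\zeta\varepsilon$ can be a square, adjoining e.g.\ $1-\zeta_{12}$-type cyclotomic units as needed. The displayed generators should be exactly the result of this.

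\emph{Absolute norm one.} A norm from a totally imaginary field equals $\Norm_{K^+/\Q}(u)\overline{\Norm_{K^+/\Q}(u)}$, which is positive. A unit has norm $\pm1$, so its norm is $+1$.

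\emph{Residue degrees.} For $q\nmid d$, the residue degree of $q$ in $\Q(\zeta_d)$ is the multiplicative order of $q$ modulo $d$. Since $(\Z/24\Z)^\times$ is elementary abelian of exponent $2$, every order divides $2$ when $d\mid 24$. Primes $q>3$ do not divide $d$, so the residue degree is $1$ or $2$.
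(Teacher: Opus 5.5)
\textbf{Gap: the unit-generation assertion is not proved.} The second assertion is about specific elements of Table~\ref{tab:field-units}: $z,\,1+z+z^2$ for $d=8$; $z,\,z-1$ for $d=12$; and $z,\,1+z^2-z^6,\,z-1,\,-1-z^5$ for $d=24$. Your proposal never examines these elements. It outlines how one might find \emph{some} generating set and then says the displayed generators ``should be exactly the result of this''. That proves nothing about the displayed elements.

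Your guess at their shape is also wrong where it matters. Since $12$ and $24$ are not prime powers, the Hasse index is $Q=2$, so roots of unity times real units give only an index-two subgroup. This is why the table uses non-real cyclotomic units such as $z-1$. Your real system for $d=24$ is not full either, since $2+\sqrt3=(z+z^{-1})^2$. The real unit group is $\langle -1,\,1+\sqrt2,\,z+z^{-1},\,\sqrt2+\sqrt3\rangle$, and even this needs Kubota's square-root analysis.

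The paper settles the assertion by expressing the displayed elements in a PARI-certified (\texttt{bnfcertify}) full unit group and checking that they generate with index one. To close it by hand:
\begin{itemize}
\item For $d=8$: $1+z+z^2=z(1+\sqrt2)$ and $Q=1$.
\item For $d=12$: $(z-1)^2=z(\sqrt3-2)$. Together with $Q\le2$ and $2+\sqrt3$ being fundamental, this gives $O_{12}^\times=\langle z,z-1\rangle$.
\item For $d=24$: you must show that the three free displayed units have regulator $2^3R^+/Q=4\log(1+\sqrt2)\log(2+\sqrt3)\log(5+2\sqrt6)$.
\end{itemize}
This index-one comparison does hold numerically, but it is the substance of the assertion, and it is missing from your proposal.

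\textbf{The other parts.} Citing Masley--Montgomery for class number one is what the paper does, and your residue-degree argument is the paper's. Your positivity argument for the norms is correct and more conceptual than the paper's check in the polynomial quotient. It should read $\Norm_{K/\Q}(u)=\Norm_{K^+/\Q}(u\overline u)=\prod_\sigma|\sigma(u)|^2$, with $\sigma$ running over embeddings modulo complex conjugation.

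Your optional Minkowski route, however, is miscomputed. For $K_8$ the bound is $24/\pi^2\approx2.43$, not below $2$, so the prime $(1-\zeta_8)$ above $2$ must be checked; this is harmless. For $K_{24}$ the bound is $1417.5/\pi^4\approx14.6$, not $6.1$. This is still harmless for primes $q>3$, whose prime ideals have norm $q^2\ge25$.

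More seriously, $3$ is not totally ramified in $K_{24}$. It has two complex-conjugate primes of norm $9$ whose product is $(1-\zeta_3)$, so $1-\zeta_3$ generates neither of them. One of them is generated by $1+\zeta_3\sqrt{-2}$, where $\sqrt{-2}=\zeta_8+\zeta_8^3$. Indeed, its relative norm from $\Q(\zeta_3,\sqrt{-2})$ to $\Q(\zeta_3)$ is $1+2\zeta_3^2=-\sqrt{-3}$, so its absolute norm in $K_{24}$ is $9$.
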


\begin{table}[tb]
\centering
\caption{Full field units; in each row $z=\zeta_d$.}\label{tab:field-units}
\begin{tabular}{crl}
\toprule
$d$ & $\operatorname{disc}(K_d)$ & Generators of $O_d^\times$\\
\midrule
3 & $-3$ & $1+z$\\
4 & $-4$ & $z$\\
6 & $-3$ & $z$\\
8 & $256$ & $z,\ 1+z+z^2$\\
12 & $144$ & $z,\ z-1$\\
24 & $2^{16}3^4$ & $z,\ 1+z^2-z^6,\ z-1,\ -1-z^5$\\
\bottomrule
\end{tabular}
\end{table}

\begin{proof}
The residue degree is the order of $q$ modulo $d$. Every unit modulo
$24$ has square one, proving the last assertion. The discriminants
follow from the cyclotomic discriminant formula.

Masley's classification~\cite[Main Theorem, p.~179]{Masley1976} gives
class number one, with $K_6=K_3$. The displayed generators, expressed
in the certified full unit group,
generate a coordinate lattice of index one together with the torsion
relation. Their norms are checked in the defining polynomial quotient.
The exact certificates and regeneration route are described in
Section~\ref{sec:computation}.
\end{proof}

\paragraph{The \texorpdfstring{$\C{3}$}{C3} factor.}
All three groups have the form $P\times\C{3}$ with $|P|=8$.
Put $R=\Z[P]$, $O=\Z[\omega]$, with $\omega^2+\omega+1=0$,
and $\lambda=1-\omega$.
Evaluation at $z=1,\omega$, with $b=b_0+b_1\omega$, gives
\begin{gather}
 \Z[P\times\C{3}]\simeq
 \{(a,b)\in R\times O[P]:a\equiv b\pmod\lambda\},
 \label{eq:c3-evaluation}\\
 q=(a-b_0-b_1)/3,\qquad
 F=(b_0+q)+(b_1+q)z+qz^2,\label{eq:c3-inverse}\\
 D_{P\times\C{3}}(F)=D_P(a)\,
 \Norm_{\Q(\omega)/\Q}\bigl(D_P(b)\bigr).
 \label{eq:c3-norm}
\end{gather}
The coefficientwise congruence makes $q\in R$; evaluation verifies the
inverse and its uniqueness. Character factorization gives the norm identity.

For any abelian $P$ of order $8$, a pair
$A_0=1+8a$, $\beta=1+8b+8c\omega$, where $a,b,c\in\Z$ and
$A_0\equiv\beta\pmod\lambda$, has the realization
\begin{equation}\label{eq:normalized-pair-construction}
 \begin{gathered}
 F=1+J_P(r+sz+tz^2),\qquad J_P=\sum_{g\in P}g,\\
 t=(a-b-c)/3,\qquad r=b+t,\qquad s=c+t.
 \end{gathered}
\end{equation}
Indeed, the congruence makes $r,s,t$ integral. The trivial $P$-character
gives $A_0,\beta,\overline\beta$, and all other characters give one.
Thus $D_{P\times\C{3}}(F)=A_0\Norm(\beta)$, with its given sign.

\subsection{The finite compatibility condition}

Let $M=\prod_j O_j$ be the maximal order in the algebra
\eqref{eq:character-norm}, with $O_j=\Z$ for a rational component.
The conductor $\mathfrak f=\{x\in M:xM\subseteq A\}$ and the order
index follow from the uniform formulas, with $N=|G|$,
\begin{equation}\label{eq:abelian-conductor}
 \mathfrak f_j=N\mathfrak D_{K_j/\Q}^{-1},\qquad
 [M:A]^2=\frac{N^N}{\prod_j|\operatorname{disc}(K_j)|}.
\end{equation}
An isolated component $a_j$ has coefficient
$N^{-1}\operatorname{Tr}_{K_j/\Q}(a_j\chi_j(g^{-1}))$ at $g$.
The character values run through all powers of the primitive root defining
$K_j$, and hence span $O_j$. Its integral-coefficient locus is therefore
$N\mathfrak D_{K_j/\Q}^{-1}$. This is an $O_j$-ideal contained in
$O_j$, since its elements are evaluations of elements of $A$;
it is the conductor component. The regular trace matrix on the group
basis is $N$ times the inversion permutation matrix, giving absolute
discriminant $N^N$ and the index formula.

For our groups the conductor is supported at $2$ and $3$.
Since it is an ideal of $M$ contained in $A$, a tuple belongs to $A$
exactly when its residue belongs to $A/\mathfrak f$.

Set $T=(M/\mathfrak f)^\times$. Let $L$ be the units of the image
of $A$ in $M/\mathfrak f$, and let $E_+$ be the image in $T$ of
$U_+=\{\varepsilon\in M^\times:\prod_j\Norm(\varepsilon_j)=1\}$.
Multiplication by an element of $U_+$ is a \emph{norm-preserving unit
correction}. The finite abelian group $H=T/(E_+L)$ measures the
obstruction to such a correction into $A$.
We write $h:T\longrightarrow H$ additively.

\begin{lemma}\label{lem:unit-correction}
If every component norm of $v\in M$ is prime to six, there is an
$\varepsilon\in U_+$ such that $\varepsilon v\in A$ if and only if
$h(v)=0$. For the same $v$, fix $r\in M$ with nonzero components and
put
\[
 Y_r=\{t\in T:rt\bmod\mathfrak f\in A/\mathfrak f\}.
\]
Then some $\varepsilon\in U_+$ satisfies $r\varepsilon v\in A$
if and only if $h(v)\in h(Y_r)$.
\end{lemma}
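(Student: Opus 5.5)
Both parts reduce to a computation in the finite quotient $M/\mathfrak f$. The key fact, recorded just before the lemma, is that $\mathfrak f\subseteq A$ is an ideal of $M$. Hence $x\in M$ lies in $A$ exactly when $\bar x=x\bmod\mathfrak f$ lies in $A/\mathfrak f$.

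First I check that $v$ reduces to a unit, so that $\bar v\in T$. The conductor is supported at $2$ and $3$, meaning every prime of $O_j$ dividing $\mathfrak f_j$ lies over $2$ or $3$. Since $\Norm(v_j)$ is prime to six, $v_j$ lies in no such prime. By the Chinese remainder theorem in each $O_j$, $\bar v_j$ is a unit of $O_j/\mathfrak f_j$, so $\bar v\in T$ and $h(v)$ is defined.

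For the first statement, suppose $\varepsilon\in U_+$ and $\varepsilon v\in A$. Then $\overline{\varepsilon v}=\bar\varepsilon\,\bar v$ is a unit of $M/\mathfrak f$ lying in the image of $A$. It is therefore a unit of that image ring: the image is a finite ring, so the inverse of a unit is a power of it and stays in the image. Thus $\bar\varepsilon\bar v\in L$, and $\bar v\in E_+L$, i.e.\ $h(v)=0$. Conversely, if $\bar v=\bar\varepsilon^{-1}\ell$ with $\varepsilon\in U_+$ and $\ell\in L$, then $\overline{\varepsilon v}=\ell\in A/\mathfrak f$, so $\varepsilon v\in A$ by the conductor property. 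Here I should write $\bar v=e\ell$ with $e\in E_+$ and use that $E_+$ is a group, being the image of the group $U_+$, so that $e^{-1}$ lifts to some $\varepsilon\in U_+$.

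For the second statement, the same reduction shows that $r\varepsilon v\in A$ holds if and only if $\bar r\,\overline{\varepsilon v}\in A/\mathfrak f$, which says precisely that $\bar\varepsilon\bar v\in Y_r$. So the condition is that $\bar v\in E_+^{-1}Y_r=E_+Y_r$.

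The remaining step is to pass from $E_+Y_r$ to the criterion $h(v)\in h(Y_r)$, which requires showing that $Y_r$ is stable under multiplication by $L$. Take $t\in Y_r$ and $\ell\in L$. Then $rt\ell\bmod\mathfrak f=(rt\bmod\mathfrak f)\cdot\ell$ is a product of two elements of the subring $A/\mathfrak f$, so it lies in $A/\mathfrak f$. Hence $Y_rL=Y_r$, and so $E_+Y_r=E_+LY_r$ is a union of cosets of $E_+L$. Therefore $\bar v\in E_+Y_r$ if and only if $h(v)\in h(Y_r)$.

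The main obstacle is bookkeeping rather than depth: making sure $h$ is applied only to genuine units of $T$, and that $L$-stability is what lets membership be read off in $H$. The hypothesis that $r$ has nonzero components is not needed for this argument.
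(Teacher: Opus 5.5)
Your proof is correct and follows essentially the same route as the paper's. You reduce modulo the conductor using $\mathfrak f\subseteq A$, and you identify $L$ with the elements of the image of $A$ that are units of $M/\mathfrak f$, by the finite-ring power argument. You then use $Y_rL=Y_r$ to get $E_+Y_r=h^{-1}(h(Y_r))$. Your added check that $\bar v\in T$ and your remark that the nonzero-component hypothesis on $r$ is not needed for the equivalence are both accurate.
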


\begin{proof}
The image of $A$ is a finite ring, so its elements which are units
in $M/\mathfrak f$ form its unit group $L$: the inverse is a
positive power of the element in this finite ring. Thus the first
condition is exactly $v\bmod\mathfrak f\in E_+L$.
The set $Y_r$ is stable under multiplication by $L$ and its inverses.
Consequently
$h^{-1}(h(Y_r))=E_+Y_r$. Lifting the $E_+$ factor to an actual
global unit proves sufficiency; necessity follows by reduction.
Retaining generator words supplies the correcting global unit; the
inverse character map recovers the integral coefficients.
\end{proof}

\paragraph{Chinese remainder assembly.}
By Chinese remainders, write $M/\mathfrak f=M_2\times M_3$ and
$A/\mathfrak f=A_2\times A_3$, with $T_p=M_p^\times$.
For raw tuples $r_2,r_3$ supported above $2,3$, respectively, denote the
image of $r_p$ at $q$ by $r_{p,q}$ and put
\[
 \begin{aligned}
 Y_2&=\{u\in T_2:r_{2,2}u\in A_2\},&
 B_2&=h\bigl(Y_2\times\{r_{2,3}^{-1}\}\bigr),\\
 Y_3&=\{v\in T_3:r_{3,3}v\in A_3\},&
 B_3&=h\bigl(\{r_{3,2}^{-1}\}\times Y_3\bigr).
 \end{aligned}
\]
The local conditions give
$Y_{r_2r_3}=(r_{3,2}^{-1}Y_2)\times(r_{2,3}^{-1}Y_3)$; applying $h$ yields
\begin{equation}\label{eq:crt-assembly}
 h(Y_{r_2r_3})=B_2+B_3.
\end{equation}
Each inverse is taken only at the opposite prime, where the raw factor
is a unit. Lemma~\ref{lem:unit-correction} applies this identity to the
supplied cofactor tuple, retaining its component absolute norms and total sign.

\paragraph{Finite unit generators.}
For a finite commutative ring $S$ and a nilpotent ideal $I$, multiplication
gives, for $j\ge1$,
\begin{equation}\label{eq:unit-filtration}
 (1+I^j)/(1+I^{j+1})\simeq I^j/I^{j+1},\qquad
 1+x\longmapsto x.
\end{equation}
Indeed, products become sums modulo $I^{j+1}$. Every lift of a unit of
$S/I$ is a unit: lift an inverse and invert the resulting element of
$1+I$ by a finite geometric series. Lifts of generators of $(S/I)^\times$,
together with $1+x$ for additive generators of each layer, therefore
generate $S^\times$, by successive cancellation. The appendices specify
the ideals, nilpotence bounds, and spanning elements.

\subsection{A prime-factor test with a uniform finite bound}

In this subsection, $M=\Z\times\prod_{j=1}^s O_j$, where each $O_j$
is either $\Z$ or the ring of integers of a totally imaginary
class-number-one field. Choose the first rational component as position
zero. Suppose $h$ is a homomorphism from the tuples prime to six to a
finite abelian group $H$, obtained from finite residue rings and killing
the image of $U_+$. Write $h_j$ for its component maps and put
$c(m)=h_0(m)$ for signed $m$ prime to six.

For each prime $q>3$ and each position $j>0$, factor $qO_j$.
For every prime ideal $\mathfrak q\mid qO_j$ of norm $q^f$,
choose an integral generator $\alpha_{\mathfrak q}$ and attach the
move
\begin{equation}\label{eq:prime-move}
 h_j(\alpha_{\mathfrak q})-f h_0(q),\qquad\text{of cost }f.
\end{equation}
For a rational position use the generator $q$ and cost one.
Include every prime ideal and component position.
Let $R_q(e)$ consist of the sums of these moves having total cost at
most $e$, including the empty sum. Repetition of a move is allowed.
The subtraction records removal of the same norm from position zero.

\begin{theorem}\label{thm:prime-allocation}
For every nonzero signed integer $m$ prime to six, the set of classes
$h(v)$ of integral tuples with signed norm product $m$ is exactly
\begin{equation}\label{eq:prime-allocation}
 c(m)+\sum_{q^e\parallel |m|}R_q(e).
\end{equation}
Here the sum is a sumset in $H$, and the empty sum is $\{0\}$.
If all costs are at most $w$, each $R_q(e)$ is unchanged when $e$
is replaced by $\min\{e,w(|H|-1)\}$.
The description is independent of the chosen prime generators and
constructs a tuple for each class in \eqref{eq:prime-allocation}.
\end{theorem}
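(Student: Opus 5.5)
The plan has four steps: a unit-invariance reduction, a normal form for tuples with signed norm product $m$, a computation of $h$ on that normal form, and a pigeonhole argument for the bound.

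\textbf{Unit invariance.} Since $h$ kills the image of $U_+$, the class $h(v)$ depends only on the tuple of component ideals together with the allowed sign data. The first task is to put every integral tuple $v$ with signed norm product $m$ into a normal form. For $j>0$, each $O_j$ is a principal ideal domain by Lemma~\ref{lem:fields}, or the rational case, so
\[
 v_j=\varepsilon_j\prod_{\mathfrak q}\alpha_{\mathfrak q}^{n_{\mathfrak q}}
\]
with $\varepsilon_j\in O_j^\times$. Totally imaginary components have positive norms, and units there have norm one. In a rational position $j>0$ the unit is $\pm1$. I absorb that sign into position zero by multiplying by the element of $U_+$ that is $-1$ at both $j$ and $0$. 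After also removing the units $\varepsilon_j$ at totally imaginary positions, which lie in $U_+$, I may assume $v_j=\prod\alpha_{\mathfrak q}^{n_{\mathfrak q}}$ for every $j>0$. The product of all norms is then $m$, so
\[
 v_0=m\Big/\prod_{j>0}\Norm(v_j),
\]
and this is an integer.

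\textbf{Computing the class.} Since $h$ is a homomorphism on tuples prime to six, I compute $h(v)$ by multiplicativity. Write $v=v^{(0)}\cdot\prod_{j,\mathfrak q}w_{j,\mathfrak q}^{n_{\mathfrak q}}$, where $v^{(0)}=(m,1,\dots,1)$ and $w_{j,\mathfrak q}$ is the tuple with $\alpha_{\mathfrak q}$ in position $j$, $q^{-f}$ in position zero (rational, hence invertible modulo the finite residue rings since $q>3$), and $1$ elsewhere. Then $h(v)=c(m)+\sum n_{\mathfrak q}\bigl(h_j(\alpha_{\mathfrak q})-fh_0(q)\bigr)$. The total cost $\sum n_{\mathfrak q}f$ at $q$ is at most $e$, because $q^e\parallel|m|$ and $v_0$ must remain integral. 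This gives the inclusion of the set of classes in \eqref{eq:prime-allocation}.

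\textbf{Converse and independence.} Conversely, given moves of cost at most $e$ at each $q$, form $v$ as above with $v_0$ the remaining integer cofactor. This realizes the class and gives the construction. Independence from the chosen prime generators holds because two generators differ by a unit of $O_j$. At an imaginary position that unit has norm one and so lies in $U_+$. At a rational position the generator is $\pm q$, and the sign is moved to position zero as before, so the move is unchanged.

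\textbf{The uniform bound.} Fix $q$ and let $X\subseteq H$ be the finite set of moves with their costs at most $w$. The issue is that $R_q(e)$ counts multisets of total cost at most $e$. A long sum $x_1+\dots+x_n$ with $n\ge|H|$ has partial sums $s_0=0,s_1,\dots,s_n$, and by pigeonhole two of them coincide. The block between them sums to zero and can be deleted, which lowers the cost while keeping the same element. Hence every element of $R_q(e)$ is a sum of at most $|H|-1$ moves, of cost at most $w(|H|-1)$. So $R_q(e)=R_q(\min\{e,w(|H|-1)\})$, using the monotonicity of $R_q(e)$ in $e$. I expect the main obstacle to be the sign and unit bookkeeping in the first step, in particular checking that sign corrections between rational positions lie in $U_+$ and that $h$ being a homomorphism extends to tuples with the rational factor $q^{-f}$ inverted; the pigeonhole bound itself is routine.
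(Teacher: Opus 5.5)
Your proposal is correct and follows essentially the same route as the paper. You factor the nonrational components using class number one and push the leftover unit and rational-sign data into $U_+$, which $h$ kills. You then read off $h(v)$ as $c(m)$ plus a word of moves whose cost at each $q$ is at most $v_q(m)$, because position zero must stay integral, and you reverse this placement for sufficiency and the construction. The only cosmetic difference is that the paper phrases the length bound through the Davenport constant with $\mathsf D(H)\le|H|$ (later sharpened by Olson's theorem). You delete consecutive zero-sum blocks directly, which is the same partial-sums pigeonhole argument.
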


\begin{proof}
Factor each nonrational component ideal of $v$ into prime ideals.
Class number one writes that component as a product of the chosen
generators times a global unit. Factor the absolute values of the
rational components as well. For a fixed $q$, their combined norm
contribution is at most $q^e$; unused powers lie in position zero.
All nonrational units have norm one. The rational signs left after
putting the total sign in position zero have product one, so the
remaining unit tuple is in $U_+$. Applying $h$ proves necessity.

Conversely, select move words realizing one element of each $R_q(e)$.
Put the selected prime generators in their indicated components,
and put $m$ divided by their positive norm product in position zero.
The cost inequalities make this last quantity a signed integer.
The resulting tuple has norm product $m$, and its class is exactly
the selected sum. A different generator of the same prime ideal
differs by a global unit, which $h$ kills in a nonrational position.
Positive generators fix this choice in rational positions.

Finally, let $\mathsf D(H)$ be the Davenport constant, the least
length forcing a nonempty zero-sum subsequence. A minimum-cost word
has no such subsequence, since deletion preserves its endpoint and
reduces cost. It therefore costs at most $w(\mathsf D(H)-1)$.
The $|H|+1$ partial sums, including zero, of any $|H|$ terms have a
repetition, so $\mathsf D(H)\le|H|$, proving the bound.
\end{proof}

For $H\simeq\prod_i\C{n_i}$ with each $n_i$ a power of $2$,
Olson's $p$-group theorem~\cite{Olson1969} gives
$\mathsf D(H)=1+s$, where $s=\sum_i(n_i-1)$. Thus
\begin{equation}\label{eq:two-group-budget}
 R_q(e)=R_q\bigl(\min\{e,ws\}\bigr).
\end{equation}
An elementary proof is retained in the supplementary background note.

For the obstruction groups
$\C{2}^2\times\C{4}$, $\C{2}^5\times\C{4}^2$ and
$\C{2}^{10}\times\C{4}$ that occur below, the values of $s$ are
$5$, $11$ and $13$, respectively.  With residue degrees at most two,
the corresponding prime budgets are therefore $10$, $22$ and $26$.
These bounds concern prime allocation and do not change the local
valuation thresholds.

For a prescribed compatibility set $B$, test whether $B$ meets
\eqref{eq:prime-allocation}. Selected prime words and
Lemma~\ref{lem:unit-correction} recover integral coefficients.
When all rational component maps vanish, $c(m)$ and the subtractions
in \eqref{eq:prime-move} vanish; this applies to the cyclic and
small Eisenstein tests.


\section{The cyclic group of order 24}\label{cy24:section}

Four critical valuation strata share an obstruction group of order $16$;
the other strata admit explicit parameter families.

Put
\[
 \mathcal D=\{1,2,3,4,6,8,12,24\},\qquad
 K_d=\Q(\zeta_d),\qquad \OO_d=\Z[\zeta_d],
\]
with $K_1=K_2=\Q$ and $\OO_1=\OO_2=\Z$. All six nonrational
fields have class number one, by Lemma~\ref{lem:fields}.
Write $H=\C{2}\times\C{2}\times\C{4}$ additively, with coordinates
modulo $2,2,4$. For $d\in\mathcal D$, let
\[
 \psi_d:\{\alpha\in\OO_d: (\Norm_{K_d/\Q}(\alpha),6)=1\}
       \longrightarrow H
\]
be the component homomorphism defined by the finite residue table in
Appendix~\ref{cy24:residue-data}. Its definition uses only multiplication
in explicitly specified rings of characteristic $2$, $3$, $4$ or $8$.
Every global unit is killed, and $\psi_1=\psi_2=0$.

Apply Theorem~\ref{thm:prime-allocation} with component maps $\psi_d$
and distinguished component $d=1$. Since $\psi_1=\psi_2=0$, the
moves reduce to $\psi_d(\alpha_{\mathfrak q})$, for $q>3$ and
$\mathfrak q\mid q$ in $K_d$, $d>2$, at cost $f$ when
$\Norm(\mathfrak q)=q^f$. Every such cost is at most two, since
$(\Z/24\Z)^\times$ has exponent two.
Let $\ell_q(h)$ be the minimum cost of a prime word representing $h$,
or infinity if none exists, with the empty word of cost zero.
For $m>0$ prime to six, define
\begin{equation}\label{cy24:prime-test}
 \mathcal R(m)=\sum_{q^e\parallel m}
       \{h\in H:\ell_q(h)\le\min(e,10)\},
 \qquad \mathcal R(1)=\{0\}.
\end{equation}
Thus only sixteen states are retained at each prime. The bound ten
follows from~\eqref{eq:two-group-budget}, since
$H=\C{2}^2\times\C{4}$ and the move costs are at most two.

Set
\begin{equation}\label{cy24:targets}
\begin{split}
 c&=(0,1,2),\qquad B=\{(0,1,0),(0,1,2)\},\\
 B'&=B\cup\{(1,0,1),(1,0,3),(1,1,1),(1,1,3)\}.
\end{split}
\end{equation}

\Needspace{18\baselineskip}
\begin{theorem}\label{cy24:main}
Zero belongs to $\Sdet{\C{24}}$. For $D\ne0$, write
$|D|=2^a3^bm$, where $m>0$ and $(m,6)=1$. Membership is independent
of the sign and is given by the following exhaustive table.
\begin{center}
\begin{tabular}{ll}
\toprule
Valuations & Necessary and sufficient condition\\
\midrule
$a=b=0$ & Always attained\\
$1\le a\le4$, or $b=1$ & Never attained\\
$a\ge5$, $b\ge2$ & Always attained\\
$a=0$, $b\ge3$ & Always attained\\
$a=0$, $b=2$ & $c\in\mathcal R(m)$\\
$a\in\{5,6\}$, $b=0$ & $\mathcal R(m)\cap B\ne\varnothing$\\
$a=7$, $b=0$ & $\mathcal R(m)\cap B'\ne\varnothing$\\
$a\ge8$, $b=0$ & Always attained\\
\bottomrule
\end{tabular}
\end{center}
Every admitted value has a constructive realization with $24$
integer coefficients.
\end{theorem}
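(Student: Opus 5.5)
We plan to combine the character decomposition $\Q[\C{24}]\simeq\prod_{d\in\mathcal D}K_d$, the local conductor analysis at $2$ and $3$, and Theorem~\ref{thm:prime-allocation}. First, zero is trivial (take $F=0$), and the determinant is a norm of $F$; replacing $F$ by $-F$ multiplies $D$ by $(-1)^{24}=1$. We therefore obtain sign independence differently: the rational components at $d=1,2$ can be swapped in sign by multiplying by the unit $-g$ for a suitable generator $g$, or more simply by noting that $\psi_1=\psi_2=0$, so a sign change at position zero does not affect the class. Next, via \eqref{eq:c3-evaluation}--\eqref{eq:c3-norm} with $P=\C{8}$, we reduce to pairs $(a,b)$ with $a\in\Z[\C{8}]$, $b\in O[\C{8}]$ and $a\equiv b\pmod\lambda$, so $D=D_{\C8}(a)\Norm(D_{\C8}(b))$.

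\textbf{Necessity of the valuation restrictions.} We will prove the excluded strata ($1\le a\le4$, or $b=1$) by local arguments. For the $3$-adic part, $a\equiv b\pmod\lambda$ forces the $\lambda$-adic valuations of the components at $d$ and $3d$ to be simultaneously zero or simultaneously positive. Since $\Norm_{K_3/\Q}$ sends $\lambda$ to $3$, and the conductor contains $3\mathfrak D^{-1}$ locally, a single factor of $3$ is impossible. For the $2$-adic part we use the known $\C{8}$ classification, in which $2$-power valuations $1,\dots,4$ are excluded, extended to the $\C{3}$-twisted components via Chinese remainder assembly \eqref{eq:crt-assembly}. These are finite congruence checks in $M/\mathfrak f$.

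\textbf{Sufficiency in the "always" rows.} We exhibit explicit families. For $\gcd(D,6)=1$ and for $288\mid D$ we invoke Kaiblinger's Theorem~4.4(i). For $a\ge5,b\ge2$ and for $a=0,b\ge3$ we write down parameter families, for example $F=1+J(\cdot)$-type elements as in \eqref{eq:normalized-pair-construction} and products of such elements with standard circulants realizing $2^5$, $3^2$ and $3^3$. Multiplicativity of $D$ then lets us absorb an arbitrary cofactor $m$ once the local obstruction vanishes, which is the content of these rows.

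\textbf{Critical strata.} For $(a,b)\in\{(0,2),(5,0),(6,0),(7,0)\}$, we fix the finitely many local shapes $r_2,r_3$ of the $2$- and $3$-parts; by the valuation analysis, only finitely many up to units are possible. We then compute $B_2,B_3$ and the sum $h(Y_r)=B_2+B_3$ using \eqref{eq:crt-assembly} and the appendix residue tables defining $\psi_d$. The claim is that $H=T/(E_+L)\simeq\C2^2\times\C4$ is identified with the stated coordinates, and that the compatibility sets are $-c=c$ (since $c$ has order $2$), $B$, $B'$ in the respective strata, while $a\ge8$ gives all of $H$. Lemma~\ref{lem:unit-correction} then shows that a cofactor tuple $v$ works if and only if $h(v)$ lies in the target. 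Theorem~\ref{thm:prime-allocation}, with $c(m)=0$ and the budget $10$ from \eqref{eq:two-group-budget}, shows that the achievable $h(v)$ form exactly $\mathcal R(m)$. The construction clause of that theorem, together with the inverse map \eqref{eq:c3-inverse}, supplies the 24 coefficients.

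\textbf{Main obstacle.} The main obstacle is the exact determination of $H$ and of the target sets $B,B'$ and $\{c\}$. This requires enumerating $L$ and $E_+$ in the finite ring $M/\mathfrak f$, using the filtration \eqref{eq:unit-filtration} and the unit generators of Table~\ref{tab:field-units}, and checking that every local shape in each stratum gives the same target. I would do this by certified computation, as referenced in Section~\ref{sec:computation}.
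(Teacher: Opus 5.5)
Your treatment of the exclusions and of the four critical strata follows the paper's route: valuation analysis, the quotient $H\simeq\C{2}^2\times\C{4}$ from the conductor residues, Lemma~\ref{lem:unit-correction}, and Theorem~\ref{thm:prime-allocation} with $c(m)=0$ and budget $10$. Citing Kaiblinger for the rows $a=b=0$ and $288\mid D$ is legitimate. The gap is in the remaining ``always'' rows.

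\textbf{The row $a=0$, $b\ge3$.} The building blocks you name do not exist: by the theorem itself $9\notin\Sdet{\C{24}}$, since $\mathcal R(1)=\{0\}\not\ni c$, and $32\notin\Sdet{\C{24}}$, since $\{0\}\cap B=\varnothing$. The normalized-pair elements $1+J_P(r+sz+tz^2)$ cannot replace them. Their determinants $A_0\Norm(\beta)$ are always $\equiv1\pmod8$, which excludes $\pm3^5$. Even $\pm81$ is not of this form: the $\lambda$-pairing forces $3\mid A_0$ and $\lambda\mid\beta$, and then $A_0\equiv1\pmod 8$ leaves only $A_0=9$ and $\beta=\varepsilon\lambda^2=-3\omega\varepsilon\in3\mu_6$, which is never $\equiv1\pmod{8O}$. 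Since $3,9\notin\Sdet{\C{24}}$, multiplicativity starting from $27=D_{24}(1+X^3+X^6)$ and the geometric $G_u$ never produces $81$ or $243$. So exact $3$-adic valuations $4$ and $5$, and most $b\ge4$ with arbitrary cofactor, remain unproved. The paper supplies the missing construction in \eqref{cy24:odd-tail}: $D_{24}(G_{15}-X^{18}PG_{5-3v})=81v$ for every odd $v$. This covers all $b\ge4$ and all cofactors at once, alongside $D_{24}(PG_u)=27u$ for $b=3$.

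\textbf{The row $a\ge8$, $b=0$.} You assert that the compatible set is all of $H$, but this is an infinite family of local computations, not a finite one. Lemma~\ref{cy24:support} pins down the $2$-profiles only for $k\le7$. For $k\ge8$ the profiles are unbounded and the Eisenstein factor $Q$ may itself be a nonunit. You would therefore need a periodicity or padding lemma, of the kind \eqref{c2c12:period-two} provides for $\C{2}\times\C{12}$, before any computation can settle the tail. The paper sidesteps this by exhibiting every $2^k$, $k\ge8$, explicitly in \eqref{cy24:even-tail}. There $F_{8+2r}$ replaces the $d=8$ value $2$ of $1+X^8$ by $2(4+3\sqrt2)^r$, using $\Norm_{K_8/\Q}(4+3\sqrt2)=4$, and $F_{9+2r}$ rescales the single value $F_9(-1)=8$ by $4^r$. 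Multiplying by $G_m$ then gives all $2^km$.

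A smaller point: your second sign argument, via $\psi_1=0$, is circular. The vanishing $\psi_1(-1)=0$ in the $E_+$-quotient is itself deduced from $EL=E_+L$, i.e.\ from $D_{24}(X)=-1$. Your first argument, multiplying by $\pm X$, is the one the paper uses.
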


In particular, the cofactor sets at $2$-adic valuations $5$ and $6$ coincide
and are contained in the cofactor set at valuation $7$. The same test
also shows that $9m\in\Sdet{\C{24}}$, for $(m,6)=1$, implies
$32m,64m\in\Sdet{\C{24}}$. The pure values $32$, $64$ and $128$
are excluded, whereas every multiple of $256$ prime to three occurs.

\subsection{Integral components and their critical supports}

Let $A=\Z[X]/(X^{24}-1)$ and $M=\prod_{d\in\mathcal D}\OO_d$.
Evaluation embeds $A$ in $M$, and
\[
 D_{24}(F):=\prod_{z^{24}=1}F(z)
       =\prod_{d\in\mathcal D}\Norm_{K_d/\Q}(F(\zeta_d)).
\]
The two rational factors carry their signs. Multiplication of $F$ by $X$
reverses $D_{24}(F)$, since the product of the $24$ roots is $-1$.
Replacing $F(X)$ by $F(-X)$ preserves the determinant.

The conductor $J=(A:M)$ has component ideals
\begin{equation}\label{cy24:conductor}
\begin{split}
 (J_1,J_2,J_3,J_4)&=(24\Z,24\Z,8\lambda_3\OO_3,12\OO_4),\\
 (J_6,J_8,J_{12},J_{24})&=(8\lambda_6\OO_6,6\OO_8,
                         4\lambda_{12}\OO_{12},2\lambda_{24}\OO_{24}),
\end{split}
\end{equation}
where $\lambda_d=1-\zeta_d^{d/3}$ when $3\mid d$.
Formula~\eqref{eq:abelian-conductor}, with the differents obtained
from $\Phi_d'$, gives these ideals and $[M:A]=2^{21}3^8$.

Here is an explicit inverse that will also construct coefficients.
Put $x=X^{16}$, $y=X^9$, so that $x^3=y^8=1$ and $X=xy$.
For $e\in\{1,2,4,8\}$ identify
$\zeta_{3e}$ with $\omega\xi_e$, where $\omega^2+\omega+1=0$
and $\xi_e$ is the specified primitive $e$th root. For a tuple
$(u_1,u_2,u_4,u_8)$, write $u_4=c_0+c_1\xi_4$ and
$u_8=\sum_{j=0}^3d_j\xi_8^j$, and put
\begin{equation}\label{cy24:T8}
\begin{split}
 t&=(u_1+u_2+2c_0,\ u_1-u_2+2c_1,
       u_1+u_2-2c_0,\ u_1-u_2-2c_1),\\
 T_8(u_1,u_2,u_4,u_8)&=(t+4d,t-4d).
\end{split}
\end{equation}
This vector is eight times the coefficient vector of the interpolant;
the formula works over $\Z$ and over $\Z[\omega]$.

Given components $\alpha_d$, expand
$\alpha_{3e}(\omega\xi_e)=u_e(\xi_e)+\omega v_e(\xi_e)$
in the integral tensor basis. Apply $T_8$ separately to
$(\alpha_e)$, $(u_e)$ and $(v_e)$, obtaining $P_*,Q_{0*},Q_{1*}$.
The tuple comes from $A$ if and only if
\begin{equation}\label{cy24:gluing}
 P_*,Q_{0*},Q_{1*}\equiv0\pmod8,
 \qquad P_*-Q_{0*}-Q_{1*}\equiv0\pmod{24}.
\end{equation}
Indeed, division by eight gives the $\C{8}$ coefficient vectors;
\eqref{eq:c3-evaluation} then gives the remaining congruence.
Formula~\eqref{eq:c3-inverse}, followed by $x=X^{16}$, $y=X^9$
and reduction modulo $X^{24}-1$, recovers the original coefficients.

\begin{lemma}\label{cy24:support}
For a nonzero determinant the following assertions hold.
\begin{enumerate}
\item Its $3$-adic valuation is $0$ or at least $2$. If it is exactly $2$,
the occupied norm components are $(1,3)$ or $(2,6)$, with valuation one
in each. The substitution $F(X)\mapsto F(-X)$ interchanges these supports.
\item Its $2$-adic valuation is $0$ or at least $5$. If it is $k\in\{5,6,7\}$,
all components with $3\mid d$ are units above two. The remaining
component norm valuations $(r_1,r_2,r_4,r_8)$ are positive, have sum $k$,
and satisfy $r_1+r_2+r_4\ge4$.
\end{enumerate}
\end{lemma}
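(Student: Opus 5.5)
Both parts rest on the same congruence mechanism. If $d'=dp^j$ with $p$ prime, then $\zeta_{d'}^{\,p^j}=\zeta_d$ gives $F(\zeta_{d'})\equiv F(\zeta_d)$ modulo every prime of $\OO_{d'}$ above $p$, because $\zeta_{d'}-\zeta_{d'}^{\,p^j}$ is divisible by $1-\zeta_{p^j}$. So along each chain of divisors linked by powers of one prime $p$, the components are simultaneously $p$-units or simultaneously divisible by a prime above $p$. For each chain I will then add up the minimal norm contributions, using the residue degrees of $2$ and $3$ in each $K_d$, which the discriminants in Table~\ref{tab:field-units} make easy to read off.

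\emph{Part (1).} The $3$-chains are $(e,3e)$ for $e\in\{1,2,4,8\}$, with the congruence taken modulo $\lambda_{3e}$.
\begin{itemize}
\item For $e=1,2$: $K_e=\Q$, and $\lambda_3,\lambda_6$ have norm $3$. If one member of the pair is divisible by a prime above $3$, both are, and the pair contributes $3$-adic valuation at least $1+1=2$.
\item For $e=4$: $3$ is inert in $K_4$ with norm $9$. Above $3$ in $K_{12}$ there is a single prime $\lambda_{12}$ of norm $9$, so the pair contributes at least $4$.
\item For $e=8$: $3$ splits in $K_8$ into two primes of norm $9$, and their extensions to $K_{24}$ have norm $9$. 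The pair again contributes at least $4$.
\end{itemize}
Since every pair contributes $0$ or at least $2$, valuation $1$ is impossible. Valuation exactly $2$ forces a single pair from $\{(1,3),(2,6)\}$, with valuation one in each member. Finally, $F(X)\mapsto F(-X)$ sends the evaluation at $\zeta_d$ to the evaluation at $-\zeta_d$. This swaps $\pm1$ and the primitive third and sixth roots, so it interchanges $(1,3)$ with $(2,6)$.

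\emph{Part (2).} The $2$-chains are $\{1,2,4,8\}$ and $\{3,6,12,24\}$.
\begin{itemize}
\item In the second chain, $2$ is inert in $K_3=K_6$ with norm $4$. It is totally ramified from $K_3$ up to $K_{12}$ and $K_{24}$, and those primes have norm $4$. Hence a nonunit second chain contributes at least $8$.
\item In the first chain, a nonunit chain gives every $r_d\ge1$, since $2$ is totally ramified with residue degree one in $K_4$ and $K_8$.
\end{itemize}
So if $k\le7$, the second chain consists of units and the first chain is nonunit. Otherwise $k=0$ or $k\ge8$. The first chain then has all $r_d$ positive with sum $k$.

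The main obstacle is the sharper bound $r_1+r_2+r_4\ge4$, since the chain argument alone only gives $3$. I will reduce modulo $X^4-1$: this product is $D_{\C4}(\bar F)$, so it suffices that an even $\C4$-determinant is divisible by $16$. Write $s=a_0+a_2$ and $t=a_1+a_3$ for the coefficients of $\bar F$. Then $F(1)F(-1)=s^2-t^2$ with $s\equiv t\pmod2$, and $\Norm(F(i))=(a_0-a_2)^2+(a_1-a_3)^2$ with $a_0-a_2\equiv a_1-a_3\pmod2$.
\begin{itemize}
\item If $s,t$ are both odd, then $8\mid s^2-t^2$, and the norm is even.
\item If $s,t$ are both even, then $4\mid s^2-t^2$. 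Moreover $a_0-a_2$ and $a_1-a_3$ are congruent modulo $2$, so either both are odd and the norm is $\equiv2\pmod4$, or both are even and $4$ divides the norm. In the first subcase I would refine modulo $16$: the parities force $s\equiv t+2\pmod4$, so $s^2-t^2\equiv 8\pmod{16}$.
\end{itemize}
This case split finishes the bound and gives $k\ge5$.
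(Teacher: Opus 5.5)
Your route is the paper's own. You count minimal residue-degree contributions along components linked by one prime: the branches $(e,3e)$ over $\F_3,\F_3,\F_9,\F_9\times\F_9$ at three, and the chain $\{3,6,12,24\}$ costing at least $8$ at two. You then get $r_1+r_2+r_4\ge4$ from a parity analysis on the $\C{4}$ quotient, which is the paper's ``one rational factor is divisible by four'' step in other coordinates.

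Two steps, however, are false as written.

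\emph{The chain congruence.} Your justification fails. We have $\zeta_{d'}-\zeta_{d'}^{p^j}=\zeta_{d'}(1-\zeta_{d'}^{p^j-1})$, and $\zeta_{d'}^{p^j-1}$ usually does not have $p$-power order. For example, $\zeta_{12}-\zeta_{12}^{3}=\zeta_{12}(1-\zeta_6)$ and $\zeta_{24}-\zeta_{24}^{8}=\zeta_{24}(1-\zeta_{24}^{7})$ are units. So $F(X)=X$ violates $F(\zeta_{d'})\equiv F(\zeta_{d'}^{p^j})$ modulo the primes above $3$, respectively $2$. What is true is $F(x)^{p^j}\equiv F(x^{p^j})\pmod p$ for $F\in\Z[X]$. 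Hence $F(\zeta_{d'})$ and $F(\zeta_{d'}^{p^j})$ lie in exactly the same primes of $\OO_{d'}$ above $p$. Equivalently, $\zeta_{d'}$ is congruent to its prime-to-$p$ part; e.g.\ modulo the prime above $3$, $\zeta_{12}\equiv\zeta_{12}^{9}$, not $\zeta_{12}^{3}$. This is the identification $\zeta_{3e}=\omega\xi_e$ built into \eqref{cy24:gluing} and into the paper's $P,Q$ decomposition. You only use the simultaneous-nonunit consequence, so everything downstream survives this repair.

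\emph{The $\C{4}$ step.} The first subcase of your even case cannot occur. Since $a_0-a_2\equiv a_0+a_2=s$ and $a_1-a_3\equiv t\pmod 2$, $s,t$ even forces both differences to be even, so the norm is divisible by $4$. This matters, because the arithmetic you give in that subcase is wrong. If $s,t$ are even with $s\equiv t+2\pmod4$, then $s-t\equiv s+t\equiv2\pmod4$, so $s^2-t^2\equiv4\pmod8$ has valuation exactly two. Together with a norm $\equiv2\pmod4$, the total would be $3$, not $4$. Replace that subcase by the remark that it is empty. The odd case then gives at least $3+1$ and the even case at least $2+2$, which proves $r_1+r_2+r_4\ge4$ and hence $k\ge5$.
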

\begin{proof}
At three, \eqref{cy24:gluing} pairs the $e$ and $3e$ components over
$\OO_e/3$, for $e=1,2,4,8$. These residue rings are
$\F_3,\F_3,\F_9,\F_9\times\F_9$. An occupied branch contributes
at least its residue degree to each of its two norm valuations.
This excludes valuation one and leaves precisely the two stated
linear branches at valuation two.

For the second assertion, use \eqref{eq:c3-norm} for the scalar and
Eisenstein evaluations $P\in\Z[\C{8}]$ and
$Q\in\Z[\omega][\C{8}]$. All eighth roots
reduce to one above two. If $Q(1)$ is a nonunit, its factors at $1,-1$
are divisible by two, and its factors from the extensions adjoining
$i$ and $\zeta_8$ have positive relative norm valuation. Therefore
$v_2(D_8(Q))\ge4$ over the unramified quadratic base, and its absolute
norm contributes at least eight. Otherwise all four factors are units.

For integer $P$, a nonunit likewise makes all four norm valuations
positive. The first three have sum at least four. When
$v_2(\Norm(P(i)))=1$, the real and imaginary parts of $P(i)$ are
both odd; hence $(P(1)-P(-1))/2$ is odd, and one rational factor is
divisible by four. If its valuation is at least two, the same product
bound follows immediately. Thus $v_2(D_8(P))\ge5$ and the asserted
profiles exhaust the three low strata.
\end{proof}

\subsection{One obstruction for the four critical strata}

For a tuple $\beta=(\beta_d)$ of integral cofactors whose norms are
prime to six, put $h(\beta)=\sum_d\psi_d(\beta_d)$. For a positive
profile $r=(r_1,r_2,r_4,r_8)$ define
\[
 S_r=(2^{r_1},2^{r_2},1,(1-i)^{r_4},1,
                         (1-\zeta_8)^{r_8},1,1),
 \qquad S_3=(3,1,1-\zeta_3,1,1,1,1,1),
\]
in the order $\mathcal D$.

\begin{proposition}\label{cy24:cofactor}
Independent global component units make $S_3\beta$ integral if and
only if $h(\beta)=c$. For $k=5,6,7$, they make $S_r\beta$ integral
for some positive profile of total $k$ with $r_1+r_2+r_4\ge4$ if
and only if
\[
 h(\beta)\in B_k,\qquad B_5=B_6=B,\quad B_7=B'.
\]
In both assertions the resulting polynomial can have either prescribed
sign of the product of the component norms.
\end{proposition}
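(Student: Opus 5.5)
The plan is to apply Lemma~\ref{lem:unit-correction} together with the Chinese remainder assembly \eqref{eq:crt-assembly}, taking $r=S_3$ or $r=S_r$ as the raw tuple and $v=\beta$ as the cofactor. For $r=S_3$ the raw tuple is supported above $3$, and the Lemma says that some $\varepsilon\in U_+$ makes $S_3\varepsilon\beta$ integral exactly when $h(\beta)\in h(Y_{S_3})$. Likewise, for each positive profile $r$ of total $k$ with $r_1+r_2+r_4\ge4$, the condition is $h(\beta)\in h(Y_{S_r})$. Since $\psi_1=\psi_2=0$ and the maps $\psi_d$ kill all global units, these sets are computed entirely in the finite residue rings of Appendix~\ref{cy24:residue-data}. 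The proposition then reduces to two equalities,
\[
 h(Y_{S_3})=\{c\},\qquad \bigcup_{r}h(Y_{S_r})=B_k,
\]
where the union runs over the admissible profiles of total $k$. Note that $h$ here is the composite $T\to H$ defined by the $\psi_d$. I would first check that $H=T/(E_+L)$ really is $\C{2}^2\times\C{4}$ with $h=\sum_d\psi_d$; this is presumably the content of the appendix table.

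Next I split by the primes $2$ and $3$ using \eqref{eq:crt-assembly}. Take $S_3$ first. Its $2$-part is trivial, so $B_2=h(Y_2\times\{S_{3,3}^{-1}\})$ with $Y_2=L_2$, and this is the single class $h(1,S_{3,3}^{-1})$. The $3$-part $Y_3$ consists of the units $v\in T_3$ for which $(3v_1,\,v_3(1-\zeta_3),\,v_d)$ satisfies the mod-$3$ gluing in \eqref{cy24:gluing}. The $(1,3)$ branch forces $v_3\equiv$ a prescribed residue times $v_1$, and the other branches are free in $A_3$. So $Y_3$ is a coset of $L_3$ times an explicit element, and $h(Y_3)$ is a single class. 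Adding the two single classes should give $c=(0,1,2)$.

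For the $2$-adic strata the $3$-part is trivial, because the components with $3\mid d$ are units above two by Lemma~\ref{cy24:support}. So $B_3$ is one class, $h(S_{r,3}^{-1})$ evaluated at the $3$-adic residues of the powers of $2$, $1-i$ and $1-\zeta_8$. The set $B_2$ comes from the $\C8$ part of \eqref{cy24:gluing}. There are only finitely many profiles $(r_1,r_2,r_4,r_8)$ with positive entries, sum $k\le7$ and $r_1+r_2+r_4\ge4$, and for each I enumerate $Y_2$ modulo $L_2$ using the filtration \eqref{eq:unit-filtration} on $M_2$. Equivalently, I can describe the $u\in T_2$ for which $T_8$ applied to $S_{r,2}u$ has its coordinates in $8\Z$, and in $8\Z[\omega]$ for the $3e$ components. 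Taking the union over profiles should give $B$ for $k=5,6$ and $B'$ for $k=7$. The new classes with first coordinate $1$ at $k=7$ should come from profiles such as $(2,2,2,1)$ or $(1,1,4,1)$, which are not available at lower total.

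The main obstacle is this $2$-adic enumeration. The congruences couple all four components modulo $8$, and for the larger values of $r_4$ and $r_8$ the quotient $Y_2/L_2$ is not a coset, so a union is needed. I would first reduce the problem using the symmetry $F(X)\mapsto F(-X)$, which swaps positions $1\leftrightarrow2$ and acts on the others, and then compute each $Y_2/L_2$ as a subset of the finite group $T_2/L_2$ by machine, cross-checked by hand on one profile per stratum. For the sign statement: multiplying by $X$ reverses the determinant, and $X$ is a unit of $A$ whose rational components $(1,-1)$ lie in $U_+$ only after adjusting. Both signs arise because the unit $(-1,1,1,\dots)$ has norm product $-1$ and maps to $0$ in $H$ (as $\psi_1=\psi_2=0$), while multiplying the integral polynomial by $X$ flips the sign. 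So whenever one sign is realized, the other is realized as well.
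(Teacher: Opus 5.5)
Your overall route is the paper's: Lemma~\ref{lem:unit-correction} with the assembly \eqref{eq:crt-assembly}, a finite search over each admissible two-profile, and multiplication by $X$ for the sign. The genuine problem is in the $S_3$ case.

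The tuple $S_3$ is a unit at two. Its entries $3$ and $1-\zeta_3$, however, vanish in the three-primary residue rings $\F_3$ of components $1$ and $3$. So in \eqref{eq:crt-assembly} it is the factor $r_3$. This gives $B_2=h(L_2\times\{1\})=0$, and the opposite-prime inverse is $S_{3,2}^{-1}$, taken at two inside $B_3$. Your $S_{3,3}^{-1}$ does not exist.

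More seriously, the $(1,3)$ branch does not force $v_3$ to be a prescribed multiple of $v_1$. Both raw residues are zero, so the paired congruence holds for every $v_1,v_3\in\F_3^\times$. Thus $Y_3$ is $L_3$ enlarged by the factor $(\F_3^\times)^2$ in components $1$ and $3$. This is an overgroup of index two, not a coset of $L_3$, so your argument only yields a coset of a subgroup of order at most two.

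The missing input is that these two $\F_3^\times$ factors have zero image in $H$ (Appendix~\ref{cy24:residue-data}). This is precisely the paper's remark that the omitted three-primary factors die under $h$. With it, $h(Y_{S_3})=\{h(S_{3,2}^{-1},1)\}$. The rational entry $3^{-1}$ contributes nothing. Also $1-\zeta_3\equiv1+7\zeta_3\pmod 8$ is the second type-A generator in Table~\ref{cy24:map}, with image $(0,1,2)$, so the class is $-(0,1,2)=c$. Once repaired, this direct evaluation is a valid substitute for the paper's anchor $W$ in \eqref{cy24:odd-anchor}. Nonemptiness is then automatic, because $(S_{3,2}^{-1},1)\in Y_{S_3}$.

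There are also some smaller points.
\begin{enumerate}
\item The appendix proves $\ker h=EL$ for the full unit image $E$. So before invoking Lemma~\ref{lem:unit-correction} you need $EL=E_+L$. This follows because $X$ lies in $L$ and has norm product $-1$, the same fact you use for the sign.
\item In the two-adic case you label the opposite-prime shift $h(1,S_{r,3}^{-1})$ as $B_3$. This is harmless, since only the sum $B_2+B_3$ enters.
\item Your guess about the new classes at $k=7$ is wrong: $(2,2,2,1)$ and $(1,1,4,1)$ have empty image. For the scalar $\C{8}$ evaluation $P$ of Lemma~\ref{cy24:support}, with $P(1)$ even, $r_8=1$ forces $P'(1)$ odd and hence $r_4=1$. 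The classes $(1,1,1),(1,1,3),(1,0,1),(1,0,3)$ come from $(1,1,2,3)$ and $(1,1,3,2)$, as in Table~\ref{cy24:profiles}.
\end{enumerate}
Your enumeration would reveal this last point anyway. The paper makes that search small by splitting $A_2$ into the scalar and Eisenstein $\C{8}$ blocks. It then normalizes the Eisenstein block and the three-primary part to one, leaving $1024$ representatives per profile.
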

\begin{proof}
Let $G=(M/J)^\times$, let $E$ be the image of the full component unit
groups, and let $L$ be the image of the units of $A/J$. The explicit
finite calculation in Appendix~\ref{cy24:residue-data} gives
\begin{equation}\label{cy24:quotient}
 G/(EL)\simeq H,
\end{equation}
with component maps given by the table; thus $\ker h=EL$.
The full and signed unit quotients agree here: the integral order unit
$X$ has determinant $-1$, so multiplying a negative-norm component
unit by $X$ shows that $EL=E_+L$.

For the odd critical stratum, divide the occupied conductors by their
seeds. The resulting cofactor quotient omits exactly the three-primary
factors in components $1$ and $3$; its other factors are unchanged.
Both omitted factors have zero image under $h$. Consequently
\eqref{cy24:quotient} induces the same quotient for this smaller ring.
There is a compatible cofactor tuple $w$ with $w_1=w_2=1$,
$w_3=5+3\zeta_3$ and all other components global units; an explicit
polynomial is given in \eqref{cy24:odd-anchor}. Its class is
$h(w)=c$. At two, the ratio of two compatible seeded tuples is a unit
of the integral order. At three their retained nonzero branches have
the same paired residues; the omitted branch imposes no cofactor
condition. Thus all compatible cofactor residues form precisely the
coset $wL$ in the smaller quotient. Allowing $E$ gives $wLE$, proving
the first assertion.

For a fixed two-profile $r$, the $Q$ tuple in the proof of
Lemma~\ref{cy24:support} is a local unit. A local order unit with
scalar factor one normalizes it to one at two. Independently at three,
$S_r$ is a unit, so a local order unit normalizes $S_r\beta$ to one.
The remaining representatives have four arbitrary unit residues
\begin{equation}\label{cy24:1024}
 (\Z/8)^\times\times(\Z/8)^\times
       \times(\OO_4/4)^\times\times(\OO_8/2)^\times,
\end{equation}
and are fixed elsewhere. There are $1024$ such representatives.
Multiplication by $S_r$ followed by \eqref{cy24:T8} tests them exactly.
Table~\ref{cy24:profiles} records all nonempty profile images; their
unions are $B,B,B'$. Since each compatible set is $L$-stable and
$\ker h=EL$, two cofactor tuples of the same class differ by an
allowed global correction and a local order unit. This proves both
necessity and sufficiency. Nonrational units preserve their positive
norms; a final multiplication by $X$ chooses the determinant sign.
\end{proof}

Lemma~\ref{cy24:support} and Theorem~\ref{thm:prime-allocation}
identify the critical seeds and their cofactor classes $\mathcal R(m)$.
Intersection with the sets in Proposition~\ref{cy24:cofactor} gives
the four critical rows of Theorem~\ref{cy24:main}.
A prime word, Lemma~\ref{lem:unit-correction} and
\eqref{cy24:gluing} construct coefficients; multiplication by $X$
chooses the sign.

\subsection{The unrestricted strata}

We give the constructions to complete the proof. Let
$J=\sum_{j=0}^{23}X^j$ and, for a signed integer $u$, let
$G_u=(X^u-1)/(X-1)$ be an integral Laurent polynomial. All expressions
are reduced modulo $X^{24}-1$. If $u=24v+r$, $0\le r<24$, this
means $G_u=vJ+G_r$. When $(u,24)=1$, permutation of the nonidentity
roots gives $D_{24}(G_u)=u$.

The mixed stratum is exactly $288\Z$: necessity follows from
Lemma~\ref{cy24:support}, and the identities
\begin{equation}\label{cy24:mixed}
\begin{split}
 D_{24}\left(1+\sum_{j=2}^{12}X^j+vJ\right)&=288(2v+1),\\
 D_{24}(1-X+vJ)&=576v
\end{split}
\end{equation}
give sufficiency. The first is the $n=24$ specialization of
\cite[Lemma~4.2]{Kaiblinger2012}; the second follows by separating the
factor at one. This known mixed stratum is included to make the
classification exhaustive.

Every odd multiple of $27$ also occurs. Put $P=1+X^3+X^6$. Then
\begin{equation}\label{cy24:odd-tail}
 D_{24}(PG_u)=27u\quad((u,24)=1),\qquad
 D_{24}(G_{15}-X^{18}PG_{5-3v})=81v\quad(v\text{ odd}).
\end{equation}
The second expression is adapted from \cite[Lemma~4]{PaudelPinner15},
whose hypotheses concern distinct odd primes; the following calculation
establishes the required case with the composite factor eight.
Indeed, $D_{24}(P)=D_8(G_3)^3=27$. For the second polynomial $F_v$,
the value at one is $9v$, and, at every other $24$th root $z$,
\[
 z^9F_v(z)=-P(z)G_{8-3v}(z).
\]
Multiplying by $z-1$ proves this identity using
$(z^3-1)P(z)=z^9-1$. Since $(8-3v,24)=1$, its geometric factors
have product one. The $23$ minus signs cancel the product of
$z^{-9}$, leaving nonidentity product nine and determinant $81v$.
The two formulas cover exact $3$-adic valuation $3$ and all higher
$3$-adic valuations, respectively, including both signs.

Finally, every $2^km$ with $k\ge8$ and $(m,6)=1$ occurs. It suffices
to construct $2^k$ and multiply by a geometric polynomial. Define
\[
 B(X)=4+3X-3X^3,\qquad
 T(X)=1-X^4+X^8-X^{12}+X^{16}-X^{20}.
\]
For $r\ge0$ put
\begin{equation}\label{cy24:even-tail}
\begin{split}
 F_{8+2r}&=1+X^8+\frac{B(X)^r-1}{3}T(X),\\
 F_{9+2r}&=F_9+\frac{4^r-1}{3}J(-X),
\end{split}
\end{equation}
where the fixed $F_9$ is displayed in \eqref{cy24:even-anchor}.
Both quotients are integral. The polynomial $T$ is zero at every
cyclotomic component except $d=8$, where it is six. Moreover,
$\Norm_{K_8/\Q}(4+3\sqrt2)=4$. Thus the first formula replaces
the $d=8$ value $2$ of $1+X^8$ by $2(4+3\sqrt2)^r$, and its
determinant is $2^8\cdot4^r$. The second changes only $F_9(-1)=8$
to $8\cdot4^r$, and $D_{24}(F_9)=512$. This proves the even tail.
The zero polynomial, the exclusions in Lemma~\ref{cy24:support},
the critical criteria and \eqref{cy24:mixed}--\eqref{cy24:even-tail}
now prove every row of Theorem~\ref{cy24:main}.


\section{The group \texorpdfstring{$\C{2}\times\C{12}$}{C2 x C12}}
\label{c2c12:section}

A quotient of order $512$ controls the local compatibility for this
group. Its two-primary class sets become unrestricted at valuation $22$.
We define the quotient directly from finite rings, independently of
Smith coordinates.

\subsection{The signed conductor quotient}

Put $G=\langle A,B\mid A^2=B^{12}=1,\ AB=BA\rangle$ and
$\mathcal A=\Z[G]$.  Index the components by
\[
 j=(\sigma,d),\qquad \sigma\in\{0,1\},\quad
 d\in\{1,2,3,4,6,12\}.
\]
The component $j$ evaluates $A$ at $(-1)^\sigma$ and $B$ at a primitive
$d$th root $t_d$.  Write $\OO_d=\Z[t_d]$, including
$\OO_1=\OO_2=\Z$, and use the power basis in every component.
The maximal order is $\mathcal M=\prod_j\OO_d$.  For $v\in\mathcal M$,
put
\begin{equation}
 \nu(v)=\prod_{\sigma=0}^1
 v_{\sigma,1}v_{\sigma,2}
 \Norm_3(v_{\sigma,3})\Norm_4(v_{\sigma,4})
 \Norm_6(v_{\sigma,6})\Norm_{12}(v_{\sigma,12}).
 \label{c2c12:norm}
\end{equation}
Here $\Norm_d$ denotes the absolute field norm.  The nonrational factors
are positive for nonzero arguments, so the four rational entries retain
the sign.  Character factorization identifies the group determinant
with~\eqref{c2c12:norm}.

For tensor coordinates set
\begin{equation}
 u=A,\qquad x=B^9,\qquad z=B^4,\qquad B=xz.
 \label{c2c12:tensor}
\end{equation}
Thus $x^4=z^3=1$.  In the quartic component, with $t=t_{12}$, use
$i=t^9=-t^3$ and $\omega=t^4=t^2-1$.  In the component $d=6$, use
$t_6=-\omega$.  These conventions fix the identifications below.

The conductor $\mathfrak f$ of $\mathcal A$ in $\mathcal M$ has the
components in Table~\ref{c2c12:conductor-table}.  The two displayed
quotients are its relatively prime local factors.  At $3$ the upper
maps send $t_3$ to $1$, $t_6$ to $-1$, and $t_{12}$ to $i$.

\begin{table}[ht]
\centering
\caption{Conductor factors for each fixed value of $\sigma$.}
\label{c2c12:conductor-table}
\begin{tabular}{c c c c}
\toprule
$d$ & $\mathfrak f_d$ & quotient at $2$ & quotient at $3$\\
\midrule
$1,2$ & $24\Z$ & $\Z/8\Z$ & $\F_3$\\
$3$ & $8(1-t_3)\OO_3$ & $\OO_3/8\OO_3$ & $\F_3$\\
$6$ & $8(1+t_6)\OO_6$ & $\OO_6/8\OO_6$ & $\F_3$\\
$4$ & $12\OO_4$ & $\OO_4/4\OO_4$ & $\F_9$\\
$12$ & $4(1-t_{12}^4)\OO_{12}$ & $\OO_{12}/4\OO_{12}$ & $\F_9$\\
\bottomrule
\end{tabular}
\end{table}

Let $T=(\mathcal M/\mathfrak f)^\times$, and let $L$ be the unit group
of the image of $\mathcal A$ in $\mathcal M/\mathfrak f$.  Let $E_+$
be the residue image of
$\{\varepsilon\in\mathcal M^\times:\nu(\varepsilon)=1\}$.
Define the canonical quotient and its map by
\begin{equation}
 H=T/(E_+L),\qquad h:T\longrightarrow H.
 \label{c2c12:quotient}
\end{equation}
We use additive notation in $H$.  For a component element $a$ prime to
$6$, let $h_j(a)$ be the class of the tuple with entry $a$ at $j$ and
ones elsewhere.

\begin{lemma}
\label{c2c12:quotient-lemma}
The quotient in~\eqref{c2c12:quotient} is isomorphic to
$\C{2}^5\times\C{4}^2$.  A supplied tuple $v\in\mathcal M$ whose
component norms are prime to $6$ admits a norm-preserving unit
correction into $\mathcal A$ if and only if $h(v)=0$.
Every affirmative instance gives the coefficients of such an element.
\end{lemma}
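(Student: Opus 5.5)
The lemma has two parts. The second and third assertions (unit correction and coefficient recovery) are formal consequences of Lemma~\ref{lem:unit-correction} once $H$ is defined by \eqref{c2c12:quotient}. The real content is the structure $H\simeq\C{2}^5\times\C{4}^2$. I plan to compute $T$, $L$ and $E_+$ prime by prime via the Chinese remainder splitting $\mathcal M/\mathfrak f=M_2\times M_3$ of Table~\ref{c2c12:conductor-table}, and then form the quotient.

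First I describe $T=T_2\times T_3$ explicitly. At $3$, each $\sigma$ contributes $\F_3^{\times}$ for $d=1,2,3,6$ and $\F_9^\times$ for $d=4,12$. At $2$, each $\sigma$ contributes $(\Z/8)^\times$ twice, $(\OO_3/8)^\times$ and $(\OO_6/8)^\times$ (both units of $\Z[\omega]/8$, with $2$ inert), and $(\OO_4/4)^\times$, $(\OO_{12}/4)^\times$. I generate each factor by the filtration \eqref{eq:unit-filtration}, lifting residue-field generators and adding $1+x$ for additive generators of each layer.

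Next I determine $L$ as the unit group of the image of $\mathcal A$. By the tensor coordinates \eqref{c2c12:tensor}, $\mathcal A=\Z[\C{2}\times\C{4}]\otimes\Z[\C{3}]$. Using \eqref{eq:c3-evaluation}, the image at $3$ consists of tuples whose $d$ and $3d$ residues agree, paired through $t_3\mapsto1$, $t_6\mapsto-1$, $t_{12}\mapsto i$. The image at $2$ is the image of $\Z[\C{2}\times\C{4}]$ in the $2$-part of its own conductor quotient, together with the Eisenstein analogue. As in the gluing criterion \eqref{cy24:gluing}, I make this concrete by an interpolation formula analogous to $T_8$ for $\C{2}\times\C{4}$. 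Generators of $L$ come from images of group elements $\pm u^ax^bz^c$ and of $1+y$ for $y$ spanning the radical layers of the image ring. For $E_+$, I use Lemma~\ref{lem:fields}: the units of $\OO_3,\OO_6,\OO_4$ are roots of unity, $\OO_{12}^\times=\langle t,t-1\rangle$, and the rational units are $\pm1$ with total sign product one.

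The quotient $T/(E_+L)$ is then a finite abelian group computation. I write every generator's image in a fixed decomposition of $T$ into cyclic factors, form the relation matrix, and take its Smith normal form. I expect the $3$-part to die completely: $L$ already surjects onto the unpaired factors, and $E_+$ together with $L$ fills the paired ones. The $2$-part should leave invariant factors $2,2,2,2,2,4,4$. As a sanity check I compare with $|H|=512$ and with the cyclic case $\C{2}^2\times\C{4}$ of \eqref{cy24:quotient}.

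The main obstacle is this Smith form computation. In particular, the radical layers of the image ring at $2$ are awkward to describe by hand, so I would certify them by exact computation (Section~\ref{sec:computation}) and record generator words so that corrections are constructive. Once $H$ is known, the criterion $h(v)=0$ is exactly the first statement of Lemma~\ref{lem:unit-correction}. For the coefficients: the retained words give $\varepsilon$, and the inverse of the character map via \eqref{eq:c3-inverse} and the $\C{2}\times\C{4}$ interpolation returns integral coefficients of $\varepsilon v$.
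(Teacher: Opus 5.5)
Your overall route is the paper's: explicit generators for $T$, $L$ and $E_+$, a Smith normal form, and then Lemma~\ref{lem:unit-correction} with~\eqref{c2c12:full-inverse} for the last two assertions. The gap is your generating set for $L$, which is too small at $3$.

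\textbf{The missing generators of $L$ at $3$.} At $3$ the image of $\mathcal A$ is the paired diagonal algebra $\F_3^4\times\F_9^2$. This algebra is semisimple, so every $1+y$ with $y$ in the radical is one there. The images of $\pm u^ax^bz^c$ (with $z\mapsto1$, $u\mapsto\pm1$, $x\mapsto$ a fourth root of unity) form only a subgroup of order $16$ in a unit group of order $2^{10}$.

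The global units cannot make up the difference. Suppose $\varepsilon\in U_+$ has two-primary residue in $L$. Its restriction to the block $z=1$ (components $d=1,2,4$) then lies in $\Z_2[\C{2}\times\C{4}]$ with root-of-unity character values. So its coefficients are integers, and Fourier orthogonality makes it $\pm u^\alpha x^\beta$. Now write an element of $L$ that is one at $2$ as $\varepsilon l'$, with $l'$ in the group $L'$ your generators produce. The lower three-primary residues of both factors come from $\pm u^\alpha x^\beta$, and these lower residues determine a paired residue. Hence at most $16$ of the $2^{10}$ such elements lie in $E_+L'$, and your relation matrix would present a quotient of order at least $64\cdot512$.

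The fix is to adjoin, for each pair in~\eqref{c2c12:three-pairs}, a Chinese-remainder lift that is one at $2$ and generates that pair at $3$. The paper does this with the diagonal generators $2$ and $1+i$.

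\textbf{The three-primary part does not die.} Your expectation that the three-primary part of $T$ dies in $H$ is false, and there are no unpaired factors at $3$. So the Smith computation must keep $T_3$. Here is a character of $T$ that detects it.
\begin{itemize}
\item For $t\in T$, let $N_3(t)\in\{\pm1\}$ be the product of the $\F_3$-norms of its twelve three-primary residues.
\item From the two-primary residues, form $\delta(t)=\prod_\sigma t_{\sigma,3}t_{\sigma,6}\Norm_{K_{12}/K_3}(t_{\sigma,12})$ in $(\Z[\omega]/4)^\times$, identifying $\OO_3$ and $\OO_6$ with $\Z[\omega]$ so that $z\mapsto\omega$.
\item Let $\psi$ be a character of $(\Z[\omega]/4)^\times$ that is trivial on $\mu_3$ and has $\psi(-1)=-1$.
\end{itemize}
The character $t\mapsto\psi(\delta(t))N_3(t)$ kills $L$. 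Paired residues at $3$ contribute squares, and $\delta$ lies in $\mu_3$ modulo $4$ on $z$ and on the generators~\eqref{c2c12:L2-generators}.

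It also kills $E_+$, because its two factors agree on each generator. They agree on the rational sign pairs, on $i$, on $t_{12}$, and on $\mu_6$, where each factor gives the sign. They also agree on $1+t_{12}$, since $\Norm_{K_{12}/K_3}(1+t_{12})=-\omega$ and $\Norm_{\F_9/\F_3}(1+i)=-1$.

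Yet the character equals $-1$ on the residue that is $-1$ at the three-primary factor of component $(0,3)$ and one elsewhere. That class is therefore nonzero in $H$.
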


\begin{proof}
Formula~\eqref{eq:abelian-conductor} gives the conductor table.
Appendix~\ref{c2c12:inverse-data} supplies complete local unit
generators and the finite presentation with invariant factors
$2,2,2,2,2,4,4$. Its global generators use the full unit groups of
Lemma~\ref{lem:fields} and three paired rational sign changes.
Lemma~\ref{lem:unit-correction} gives the criterion, and
\eqref{c2c12:full-inverse} recovers the coefficients.
\end{proof}

\subsection{Finite local class sets}

We first classify the raw valuations at $2$ and $3$ and their compatible
cofactor classes; the scalar test then allocates the remaining primes.
At $2$, decomposition by $z$ gives two $\C{2}\times\C{4}$ blocks,
over $\Z_2$ and $\Z_2[\omega]$. Let $R_a$ and $Q_r$ be the classes
of unit cofactors compatible with raw factors of norms $2^a$ and
$2^{2r}$ in the respective blocks, with their products normalized to one at
$3$. Compatibility means that multiplication by the
raw factors satisfies the local inverse congruences. Equations
\eqref{c2c12:fibres}--\eqref{c2c12:block-sets} in
Appendix~\ref{c2c12:two-data} give exact finite definitions using
clipped profiles and bounded exponent vectors.

At $3$, the conductor pairs a lower component with its ramified upper
component. Their raw factors are either both absent or both present.
Let $S_b$ be the compatible cofactor classes for these factors of norm
$3^b$, with their products normalized to one at $2$. The residue anchors and bounded supports in
\eqref{c2c12:three-anchor}--\eqref{c2c12:three-set} define this set
explicitly in Appendix~\ref{c2c12:three-data}. Put
\begin{equation}
 T_k=\bigcup_{a+2r=k}(R_a+Q_r),\qquad C_{k,b}=T_k+S_b.
 \label{c2c12:allowed}
\end{equation}
The letter $T_k$ denotes a subset of $H$, unlike the conductor unit
group $T$ in~\eqref{c2c12:quotient}.

\begin{lemma}
\label{c2c12:assembly}
For a supplied tuple $v$ of component norms prime to $6$, there are raw
uniformizer factors $r$ of norm $2^k3^b$ such that $rv$ admits a
norm-preserving unit correction into $\mathcal A$ if and only if
$h(v)\in C_{k,b}$. The sets satisfy
\begin{equation}
 T_0=\{0\},\qquad T_1=\cdots=T_7=\varnothing,
 \qquad T_k=H\quad(k\ge22),
 \label{c2c12:two-tail}
\end{equation}
and $S_b$ is nonempty exactly when $b\ne1$.
\end{lemma}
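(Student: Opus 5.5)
We reduce the criterion to the Chinese remainder assembly \eqref{eq:crt-assembly} together with Lemma~\ref{lem:unit-correction}, and then verify the three structural claims by a combination of valuation arguments and finite enumeration.

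\emph{Step 1: the criterion.} Write the raw factor as $r=r_2r_3$, where $r_2$ is supported above $2$ and $r_3$ above $3$. By Lemma~\ref{lem:unit-correction}, $rv$ admits a norm-preserving unit correction exactly when $h(v)\in h(Y_r)$. By \eqref{eq:crt-assembly}, $h(Y_r)=B_2+B_3$.

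At $2$, the decomposition by $z$ splits $\mathcal M_2/\mathfrak f$ into two blocks: a $\C{2}\times\C{4}$ block over $\Z_2$ (components $d=1,2,4$) and one over $\Z_2[\omega]$ (components $d=3,6,12$). The local inverse congruences separate along these blocks. Hence $B_2$ is the union, over splittings $a+2r=k$, of $R_a+Q_r$; the factor $2r$ appears because the residue degree above $2$ in the Eisenstein block is two. The normalization of $r_{2,3}$ to one is exactly the convention built into $R_a$ and $Q_r$. At $3$, the same reasoning identifies $B_3$ with $S_b$.

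Taking the union over all raw factors of norm $2^k3^b$ gives $h(v)\in C_{k,b}$. Sufficiency follows by lifting the $E_+$ factor, as in Lemma~\ref{lem:unit-correction}, and the inverse formulas of Appendix~\ref{c2c12:inverse-data} then produce the coefficients.

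\emph{Step 2: the easy tail values.} We have $T_0=\{0\}$ because the only profile is the empty one, with $Y=L$.

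The claim on $S_b$ is the analogue of Lemma~\ref{cy24:support}(1). The conductor pairs each lower component with its ramified upper component over $\F_3$ or $\F_9$. An occupied branch therefore contributes at least its residue degree to each of two norm valuations, which forces $b\ne1$. Nonemptiness for $b=0$ is trivial. For $b\ge2$ we exhibit anchors: the $\C{12}$-type seed $S_3$ for $b=2$, and products with $P=1+B^3+B^6$-type polynomials for larger $b$. These anchors are then checked against \eqref{c2c12:three-anchor}--\eqref{c2c12:three-set}.

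\emph{Step 3: $T_1=\dots=T_7=\varnothing$.} This is a $2$-adic valuation bound, proved as in Lemma~\ref{cy24:support}(2) and applied to each $\C{2}\times\C{4}$ block. A nonunit scalar block makes all its norm valuations positive and already forces total at least $8$. A nonunit Eisenstein block contributes at least $2\cdot4=8$. Emptiness of $R_a$ for $a<8$ and of $Q_r$ for $r<4$ then gives the result. The alternative is to read these facts directly off the finite definitions via the clipped profiles.

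\emph{Step 4: $T_k=H$ for $k\ge22$.} This is the hardest step. I would argue it with a monotonicity and saturation argument. First, use explicit raw seeds multiplying a $d=4$ or $d=1$ component by $(1+i)^2$-type elements or by a norm-$4$ unit-correcting factor, analogous to $4+3\sqrt2$ in \eqref{cy24:even-tail}. This should show $R_a+R_2'\subseteq R_{a+2}$ for a fixed set $R_2'\ni0$, so that the sets $T_k$ increase along $k\mapsto k+2$. Second, the enumeration in Appendix~\ref{c2c12:two-data} determines the finitely many $T_k$ up to the point where they stabilize; one then checks that $T_{22}=T_{23}=H$.

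The main obstacle is keeping this enumeration finite and certified. I expect the bounded exponent vectors in \eqref{c2c12:block-sets}, together with Olson's bound $s=11$ on chains of additions in $H$, to cap the search: once $T_k$ and $T_{k+1}$ both equal $H$, monotonicity propagates to all larger $k$. The precise threshold $22$ comes from the computation rather than from a conceptual argument.
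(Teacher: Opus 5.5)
Step~1 is the paper's argument: the two $z$-blocks at $2$ and the paired factors at $3$ are assembled by \eqref{eq:crt-assembly}, then Lemma~\ref{lem:unit-correction} gives the equivalence. Steps~2--3 are fine after small repairs noted below. The genuine gap is in Step~4.

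\textbf{The gap in Step~4.} The monotonicity $T_k\subseteq T_{k+2}$, which you derive from $R_a+R_2'\subseteq R_{a+2}$ with $0\in R_2'$, is false.
\begin{itemize}
\item The Eisenstein block has six components, and all are nonunits as soon as one is. Hence $Q_r=\varnothing$ for $1\le r\le5$, so $T_8=R_8$ and $T_{10}=R_{10}$. But Table~\ref{c2c12:initial-two} gives $|T_8|=12>|T_{10}|=8$.
\item The reason is that raising an exponent by two preserves the class only at a saturated coordinate, $p_j\ge c_j$ with $c=(3,3,3,3,4,4)$. There the raw entry already vanishes modulo the conductor, and the extra factor $4$, or $\pi^2i^{-1}=-2$ after a global unit, is one at $3$. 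This is \eqref{c2c12:period-two}.
\item Unsaturated profiles are rigid. Every profile contributing to $R_8$ or to $Q_7$ has all entries at most $2$. So the classes of the nonempty set $R_8+Q_7\subseteq T_{22}$ arise there only from rigid profiles, and your padding cannot carry them to $k=24$.
\end{itemize}
Consequently, verifying $T_{22}=T_{23}=H$ does not by itself propagate to larger $k$.

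\textbf{What the paper does instead.} The missing ingredient is bookkeeping modulo $4$. Rational padding adds $2$ to $k$, and quadratic padding adds $4$. For each class $h$ and each $s\in\Z/4\Z$, the paper takes the least cost $m(h,s)$ of a block combination containing at least one padded record, using rational padded records at costs $m$ and $m+2$. Then $h\in T_k$ whenever $k\equiv s\pmod4$ and $k\ge m(h,s)$, which gives \eqref{c2c12:combined-minimum}. The enumeration shows that all $2048$ minima are finite, with maximum $26$. So for $k\ge23$, a value $m(h,k\bmod4)>k$ would be at least $k+4>26$, which is impossible. Only $T_{22}=H$ needs a separate boundary check.

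Olson's bound plays no role here. It controls free sums of prime moves, whereas these local sets are cut out by a joint congruence on the whole profile. Their finiteness comes from clipping $\bar p_j=\min(p_j,c_j)$ together with the period-two rule.

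\textbf{Smaller repairs.} In Step~3, positivity of the six valuations gives only $6$ for the scalar block. The bound $8$ needs the congruence $n_+\equiv0\pmod4$ from \eqref{c2c12:local-inverse}, which forces $g_+\in2\Z_2[i]$ when $v_2(a_+)=v_2(b_+)=1$. Also, the Eisenstein block's relative total is at least $6$, not $4$, though your weaker bound suffices. In Step~2, drop the anchors: $1+B^3+B^6$ has determinant $3^6$ here, so it does not reach every $b\ge2$. Instead use one occupied degree-one pair with $(x_j,y_j)=(1,b-1)$. It has cost $b$ and is compatible at $3$ because both paired residues vanish.
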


\begin{proof}
The local calculations in Appendices~\ref{c2c12:two-data} and
\ref{c2c12:three-data}, assembled by~\eqref{eq:crt-assembly}, give
the union of $h(Y_r)$ for raw norm $2^k3^b$.
Lemma~\ref{lem:unit-correction} proves the equivalence.
The complete minima modulo $4$ and boundary check $T_{22}=H$ in
Appendix~\ref{c2c12:two-data} give~\eqref{c2c12:two-tail}.
An occupied degree-one pair realizes every $b\ge2$; none has cost one.
\end{proof}

\subsection{The scalar classification}

Apply Theorem~\ref{thm:prime-allocation} with the quotient $H$,
component maps $h_j$, and distinguished position $j_0=(0,1)$.
Let $\ell_q(s)$ be the minimum cost of a prime word representing $s$,
with value $\infty$ if $s$ is unreachable.
For a signed integer $m$ prime to $6$, define
\begin{equation}
 \mathcal P(m)=h_{j_0}(m)+
 \sum_{q^e\parallel|m|}\{s\in H:\ell_q(s)\le\min(e,22)\}.
 \label{c2c12:prime-predicate}
\end{equation}
For $m=\pm1$, the prime sum is empty and
$\mathcal P(m)=\{h_{j_0}(m)\}$; the two signs may have different classes.

\Needspace{12\baselineskip}
\begin{theorem}
\label{c2c12:classification}
Zero belongs to $\Sdet{\C{2}\times\C{12}}$. Let
$D=2^k3^bm\ne0$, where $k,b\ge0$ and $\gcd(m,6)=1$, with $m$
retaining the sign of $D$. If $b=1$ or $1\le k\le7$, then $D$ does
not occur. For $k\ge22$, membership is equivalent to $b\ne1$.
For $k=0$ or $8\le k\le21$,
\begin{equation}
 D\in\Sdet{\C{2}\times\C{12}}
 \quad\Longleftrightarrow\quad
 \mathcal P(m)\cap C_{k,b}\ne\varnothing.
 \label{c2c12:criterion}
\end{equation}
Every admitted value has a constructive realization with $24$
integer coefficients.
\end{theorem}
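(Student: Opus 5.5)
The plan is to assemble Theorem~\ref{c2c12:classification} from the framework results, with no new local computation. First, the zero polynomial gives $0\in\Sdet{\C{2}\times\C{12}}$. For $D\ne0$, write $D=\nu(v)$ with $v$ the image of some $F\in\mathcal A$. In each component, split off a raw uniformizer factor supported above $2$ and $3$, of total norm $2^k3^b$. We may take its nonrational parts to be powers of fixed generators, using class number one from Lemma~\ref{lem:fields}, and its rational parts to be positive powers of $2$ and $3$. The cofactor $v'$ then has component norms prime to $6$, and its signed norm product is $m$. Lemma~\ref{c2c12:assembly} says that such a $v$ lies in $\mathcal A$, after a norm-preserving unit correction, exactly when $h(v')\in C_{k,b}$. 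Note that a unit correction of $v$ itself also gives a determinant-$D$ element. So $D$ occurs if and only if some integral tuple $v'$ with signed norm product $m$ has $h(v')\in C_{k,b}$.

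Theorem~\ref{thm:prime-allocation}, applied with $H$, the maps $h_j$ and position $j_0=(0,1)$, identifies the set of attainable classes $h(v')$ as $\mathcal P(m)$. The truncation $\min(e,22)$ is harmless: it is the budget \eqref{eq:two-group-budget} with $s=11$ for $H\simeq\C{2}^5\times\C{4}^2$ (Lemma~\ref{c2c12:quotient-lemma}) and weight $w=2$. The weight bound holds because every residue degree is at most two by Lemma~\ref{lem:fields}. This proves the criterion \eqref{c2c12:criterion} in all cases.

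The exclusions and the tail then follow from \eqref{c2c12:two-tail}. If $1\le k\le7$, then $T_k=\varnothing$, so $C_{k,b}=\varnothing$. If $b=1$, then $S_1=\varnothing$, so again $C_{k,b}=\varnothing$. For $k\ge22$, we have $T_k=H$, hence $C_{k,b}=H+S_b$. This set equals $H$ when $S_b\ne\varnothing$, that is when $b\ne1$. Since $\mathcal P(m)$ is always nonempty (it contains $c(m)$ plus the empty words), membership reduces to $b\ne1$.

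Constructivity comes from the forward direction of the same argument:
\begin{itemize}
\item a chosen prime word (Theorem~\ref{thm:prime-allocation}) gives $v'$;
\item a raw factor $r$ attaining the class in $C_{k,b}$ comes from the appendix data;
\item the correcting unit comes from Lemma~\ref{lem:unit-correction};
\item the inverse \eqref{c2c12:full-inverse} recovers $24$ integer coefficients.
\end{itemize}

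The main point needing care is the reduction in the first step, namely that the raw factor can be chosen in exactly the normalized form used to define $R_a$, $Q_r$ and $S_b$. Two things must be checked.
\begin{itemize}
\item The $2$- and $3$-parts separate via \eqref{eq:crt-assembly}, with the opposite-prime inverses absorbed in the normalization.
\item Changing the generator choice for the raw factor only multiplies by global units, which lie in $E_+$ up to sign.
\end{itemize}
Sign bookkeeping also needs attention. The four rational entries carry the sign, and $m$ keeps the sign of $D$, so any sign discrepancy must be shown to be absorbed into $h_{j_0}(m)$ rather than into the unit group. Here I would use that $E_+$ contains the paired rational sign changes, as in the proof of Lemma~\ref{c2c12:quotient-lemma}. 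Then only the total sign, which is recorded at position $j_0$, matters.
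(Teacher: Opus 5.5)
Your proposal is correct and follows the paper's own proof essentially step for step. Theorem~\ref{thm:prime-allocation}, with budget $2\cdot 11=22$, gives the cofactor classes $\mathcal P(m)$, and Lemma~\ref{c2c12:assembly} gives the compatible classes $C_{k,b}$, so their intersection yields the criterion; the empty sets $T_1,\dots,T_7$ and $S_1$ give the exclusions, and $T_k=H$ for $k\ge22$ together with $c(m)\in\mathcal P(m)$ gives the tail. Your extra care about normalizing the raw factor and about the sign at $j_0$ is already built into Lemma~\ref{c2c12:assembly} and the proof of Theorem~\ref{thm:prime-allocation}, so it spells out details the paper leaves implicit rather than changing the argument.
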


\begin{proof}
Lemmas~\ref{lem:fields} and \ref{c2c12:quotient-lemma} verify the
hypotheses of Theorem~\ref{thm:prime-allocation}, with move costs at
most two. Since $H\simeq\C{2}^5\times\C{4}^2$,
\eqref{eq:two-group-budget} gives the prime budget $2(5+3+3)=22$.

After removing the factors above $2$ and $3$, the possible signed
cofactor classes are exactly $\mathcal P(m)$ by
Theorem~\ref{thm:prime-allocation}; the compatible classes are exactly
$C_{k,b}$ by Lemma~\ref{c2c12:assembly}. Their intersection proves
both directions. A witnessing prime word and
Lemma~\ref{lem:unit-correction} construct the coefficients, with $j_0$
carrying the sign. The zero vector realizes zero.

The empty local sets give the exclusions. For $k\ge22$ and $b\ne1$,
Lemma~\ref{c2c12:assembly} gives $C_{k,b}=H$; placing all of $m$ in
$j_0$ then avoids factoring $m$.
\end{proof}

The odd congruence $D\equiv1\pmod8$ follows also from the subgroup
inclusion of \cite[Theorem~1.4]{YamaguchiDedekind2024} and the
$\C{2}\times\C{4}$ classification of
\cite[Theorem~3.1]{PinnerSmyth2020}.  It is a useful preliminary test
at $k=0$, but does not replace~\eqref{c2c12:criterion}.

\paragraph{A signed example.}
Take $D=-2^{13}$, so that $m=-1$ and $b=0$.
The prime word is empty, leaving the baseline class $h_{j_0}(-1)$.
Put $t=t_{12}$ and $u_\pm=1-2t^2\pm2t^3$.
Since $u_-u_+=1$, both are units of norm one.
In the component order $d=1,2,3,4,6,12$, consider
\[
\begin{array}{c|rrrrrr}
 &1&2&3&4&6&12\\ \hline
 A=1  &-2&-2&1&2i&1&u_-\\
 A=-1 & 4&-8&1&4i&1&u_+.
\end{array}
\]
Use the supplied cofactor $v_{0,1}=-1$, all other entries one,
and the rational-block profile $(1,1,2,3,2,4)$, with no other raw
factors. The Gaussian raw entries are $(1-i)^2=-2i$ and $(1-i)^4=-4$.
The unit correction changes the rational signs at $(0,2)$ and $(1,2)$,
multiplies the Gaussian entries by $-1$ and $-i$, and puts $u_-$ and
$u_+$ in the quartic positions. Its total signed norm is one.
The row norms are $16$ and $-512$, and the inverse evaluation gives
\[
 F=B-B^2-B^4+B^5+B^9-A(B+B^3+B^5),
\]
with $D_G(F)=16(-512)=-8192$.
This realizes $h_{j_0}(-1)\in C_{13,0}$ while retaining the sign in
the distinguished rational cofactor.


\section{The group \texorpdfstring{$\C{2}\times\C{2}\times\C{6}$}{C2 x C2 x C6}}
\label{c2c2c6:section}

For this group, every odd determinant can be concentrated in one rational--Eisenstein pair.
The same reduction settles most even valuations.
Only $27$ valuation pairs require a finite quotient retaining the character positions.

Write $G=\C{2}^3\times\C{3}$, with generators $u,v,w,z$ of orders $2,2,2,3$.
This identifies $G$ with $\C{2}\times\C{2}\times\C{6}$ by taking the last generator to be $wz^2$.
Throughout this section, put
\[
 O=\Z[\omega],\qquad \omega^2+\omega+1=0,\qquad
 \lambda=1-\omega,\qquad \Norm(b+c\omega)=b^2-bc+c^2.
\]
Index the characters of $\C{2}^3$ by $h=(h_0,h_1,h_2)\in\F_2^3$, in the order $4h_0+2h_1+h_2$.
Let $W_{hg}=(-1)^{h\cdot g}$.
The rational character components of $F\in\Z[G]$ are pairs $(a_h,\beta_h)\in\Z\times O$, and
\begin{equation}\label{c2c2c6:product}
 D_G(F)=\prod_h a_h\Norm(\beta_h).
\end{equation}
All field norms in this formula are positive when the determinant is nonzero.

\begin{lemma}\label{c2c2c6:gluing}
A tuple $(a_h,\beta_h)$, with $\beta_h=b_h+c_h\omega$, comes from $\Z[G]$ precisely when
\begin{equation}\label{c2c2c6:gluing-equations}
 a_h\equiv\beta_h\pmod\lambda,\qquad
 Wa\in8\Z^8,\qquad W\beta\in8O^8.
\end{equation}
Its coefficient of $u^{g_0}v^{g_1}w^{g_2}z^j$ is the $g$-th Walsh transform, divided by $24$, of the $j$-th vector in
\begin{equation}\label{c2c2c6:inverse}
 (a+2b-c,\ a-b+2c,\ a-b-c).
\end{equation}
The component conductors are $24\Z$ and $8\lambda O$.
\end{lemma}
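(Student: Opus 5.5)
We will obtain the lemma by specializing the $\C{3}$ evaluation \eqref{eq:c3-evaluation} to $P=\C{2}^3$, with $R=\Z[P]$ and $O[P]$ as there. After that we only need to translate three things into character coordinates: membership in $R$ and $O[P]$, the coefficientwise congruence modulo $\lambda$, and the inverse \eqref{eq:c3-inverse}.

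\emph{Integrality over $P$.} The characters of $\C{2}^3$ are $g\mapsto W_{hg}$. Evaluating $x=\sum_g x_g u^{g_0}v^{g_1}w^{g_2}$ at all characters gives the vector $Wx$. Since $W$ is symmetric with $W^2=8I$, the inverse is $x_g=\tfrac18(W\hat x)_g$, where $\hat x=(x(\chi_h))_h$. Hence a vector $a\in\Z^8$ of character values comes from $R$ exactly when $Wa\in8\Z^8$. In the same way, $\beta\in O^8$ comes from $O[P]$ exactly when $W\beta\in8O^8$; the transform acts on the two $\Z$-coordinates $b,c$ separately. This yields the second and third conditions in \eqref{c2c2c6:gluing-equations}.

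\emph{The congruence at $\lambda$.} In \eqref{eq:c3-evaluation} the condition $a\equiv b\pmod\lambda$ is coefficientwise in $O[P]$, whereas \eqref{c2c2c6:gluing-equations} imposes it on each component. We bridge the two by reducing modulo $\lambda$. Since $O/\lambda=\F_3$, the evaluation map $\F_3[P]\to\F_3^8$ is given by $W$ reduced modulo $3$. It is an isomorphism because $W^2=8I$ and $8$ is invertible modulo $3$. So $a-b$ vanishes coefficientwise modulo $\lambda$ if and only if all its character values $a_h-\beta_h$ vanish modulo $\lambda$. This is the only step that is not pure bookkeeping, and the invertibility of $8$ modulo $3$ settles it.

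\emph{Inverse formula and conductors.} Write $A=Wa/8$, $B_0=Wb/8$ and $B_1=Wc/8$ for the coefficient vectors of $a$, $b_0$ and $b_1$ over $P$. By \eqref{eq:c3-inverse}, the coefficients of $z^0,z^1,z^2$ are $b_0+q$, $b_1+q$ and $q$, where $q=(A-B_0-B_1)/3$. Substituting gives
\[
 \tfrac13(A+2B_0-B_1),\qquad \tfrac13(A-B_0+2B_1),\qquad \tfrac13(A-B_0-B_1).
\]
These are exactly the Walsh transforms of the vectors in \eqref{c2c2c6:inverse}, divided by $24$.

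For the conductors, apply \eqref{eq:abelian-conductor} with $N=24$. A rational component has trivial different, so its conductor is $24\Z$. For $\Q(\omega)$ the different is $\lambda O$. Since $3=-\omega^2\lambda^2$, we get $24\lambda^{-1}O=8\lambda O$.

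The determinant formula \eqref{c2c2c6:product} is then \eqref{eq:c3-norm}, once $D_P$ is factored into its eight rational characters.
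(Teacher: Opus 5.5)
Your proof is correct and takes the paper's route: specialize the $\C{3}$ evaluation \eqref{eq:c3-evaluation}--\eqref{eq:c3-inverse} to $P=\C{2}^3$, invert the characters with $W^{-1}=W/8$, and read the conductors off \eqref{eq:abelian-conductor}. You also spell out a step the paper leaves implicit: the coefficientwise congruence modulo $\lambda$ is equivalent to the componentwise one because $W$ is invertible modulo $3$.
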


\begin{proof}
Combine the $C_3$ evaluation \eqref{eq:c3-evaluation}--\eqref{eq:c3-inverse}
with $W^{-1}=W/8$ to obtain the congruences and inverse formula.
The conductor formula \eqref{eq:abelian-conductor} gives $24\Z$ and
$8\lambda O$.
\end{proof}

\subsection{The normalized pair}

For a nonzero signed odd integer $t$, let $T(t)$ denote the condition
\begin{equation}\label{c2c2c6:T-pair}
 t=A \Norm(\beta),\qquad A\in\Z\text{ odd},\qquad
 \beta\equiv1\pmod{8O},\qquad A\equiv\beta\pmod\lambda.
\end{equation}
There is no congruence condition modulo $4$ or $8$ on $A$ in this definition.

\begin{proposition}[Normalization of odd determinants]\label{c2c2c6:normalized-pair}
For an odd integer $D$, membership in $\Sdet{G}$ is equivalent to
$D\equiv1\pmod8$ and $T(D)$.
Every such determinant has a realization
\begin{equation}\label{c2c2c6:one-pair-polynomial}
 1+(1+u)(1+v)(1+w)(r+sz+tz^2),\qquad r,s,t\in\Z.
\end{equation}
\end{proposition}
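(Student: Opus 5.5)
The plan is to handle sufficiency by the normalized-pair construction \eqref{eq:normalized-pair-construction} and necessity by collapsing all eight character pairs into the trivial one. The collapse would use only norm-preserving unit corrections and the congruences of Lemma~\ref{c2c2c6:gluing}.

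\emph{Sufficiency.} Suppose $D\equiv1\pmod 8$ and $T(D)$ holds with $D=A\Norm(\beta)$ and $\beta=1+8b+8c\omega$. Then
\[
 \Norm(\beta)\equiv1\pmod 8,
\]
so $A\equiv D\equiv1\pmod8$ and we may write $A=1+8a$. The condition $A\equiv\beta\pmod\lambda$ is exactly the hypothesis of \eqref{eq:normalized-pair-construction} for $P=\C{2}^3$. Since $J_P=(1+u)(1+v)(1+w)$, that construction gives a polynomial of the form \eqref{c2c2c6:one-pair-polynomial}. Its trivial $P$-character yields $(A,\beta)$ and every other character yields $(1,1)$, so by \eqref{c2c2c6:product} its determinant is $A\Norm(\beta)=D$, with the correct sign. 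This also shows that every odd member of $\Sdet{G}$ is realized in the displayed form, once necessity is proved.

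\emph{Necessity.} Let $D$ be odd, $D=D_G(F)$, with components $(a_h,\beta_h)$. Every $a_h$ is odd and every $\beta_h$ is a unit modulo $2$. The vector $(a_h)$ lists the character values of an element of $\Z[\C{2}^3]$, namely the $z=1$ evaluation. Hence $\prod_h a_h$ is an odd $\C{2}^3$ group determinant, which is $\equiv1\pmod8$ by the Pinner--Smyth classification for $\C{2}^3$ \cite{PinnerSmyth2020}. Likewise the $\beta_h$ are the character values of some $f\in O[\C{2}^3]$, by the condition $W\beta\in8O^8$.

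Put $A=\prod_h a_h$ and $\beta=\prod_h\beta_h$. Then $D=A\Norm(\beta)$, and $A\equiv\beta\pmod\lambda$ follows by multiplying the eight congruences $a_h\equiv\beta_h$. It remains to replace $\beta$ by $\varepsilon\beta$ with $\varepsilon\in\{\pm\omega^k\}$, a unit of norm one, so that $\varepsilon\beta\equiv1\pmod{8O}$. We must also keep $A\equiv\varepsilon\beta\pmod\lambda$. Since $\varepsilon\equiv\pm1\pmod\lambda$, any compensating sign can be moved into $A$, and then $D$ changes sign accordingly. We handle this by noting that $\Norm(\varepsilon\beta)=\Norm(\beta)$ and that $A\equiv1\pmod 8$ forces the sign to be consistent. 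Once this is done, $T(D)$ holds, and $D=A\Norm(\beta)\equiv1\pmod8$ as well.

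\emph{Main obstacle.} The key step is the Eisenstein analogue of the integer fact: for $f\in O[\C{2}^3]$ with odd determinant,
\[
 \prod_{\chi}f(\chi)\equiv\varepsilon\pmod{8O}
\]
for some root of unity $\varepsilon\in O^\times$. I would first multiply $f$ by a power of $\omega$, and possibly by $-1$, so that $f(1)\equiv1\pmod2$. All characters of $\C{2}^3$ agree modulo $2$, so each $f(\chi)$ lies in $1+2O$. I would then write $f=f_0+\sum_{g\ne1}f_g g$ and expand the product over the eight characters modulo $8$. The products of pairs and triples of the terms $\chi(g)f_g$ should telescope because of the orthogonality sums $\sum_\chi\chi(g)=0$ and $\sum_\chi\chi(g)\chi(g')=8\delta_{g,g'}$. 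This should mirror the integer computation giving $\equiv1\pmod8$, with rank three being exactly what makes all terms of degree at most two vanish modulo $8$.

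If this direct expansion becomes messy, the fallback is a finite check: enumerate $f$ modulo $8O[\C{2}^3]$ up to multiplication by units of the form $1+2x$, and verify the congruence by computation. This is in the spirit of the appendix certificates.
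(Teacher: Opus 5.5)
Your sufficiency argument is correct and matches the paper's, and citing Pinner--Smyth for $\prod_h a_h\equiv1\pmod8$ is legitimate; the paper proves it directly. The gap is in the Eisenstein half of necessity, which is the real content of the proposition.

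First, the sign paragraph fails as written. Suppose $\prod_h\beta_h$ were congruent modulo $8O$ to some $-\omega^k$. Then the unit $\varepsilon$ with $\varepsilon\beta\equiv1\pmod{8O}$ satisfies $\varepsilon\equiv-1\pmod\lambda$, so $\varepsilon\beta\equiv-A\pmod\lambda$. Moving the sign into $A$ gives the pair $(-A,\varepsilon\beta)$, which has product $-D$; that proves $T(-D)$, not $T(D)$. The congruence $A\equiv1\pmod8$ cannot repair this, because $8$ and $\lambda$ are coprime, so $A\bmod 8$ says nothing about $A\bmod\lambda$. What you actually need is that the root of unity lies in $\mu_3$. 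Dividing by it then changes neither the norm nor the residue modulo $\lambda$, since $\omega\equiv1\pmod\lambda$.

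Second, this is exactly the ``main obstacle'' you leave unproved. ``Should telescope'' is not an argument. The fallback enumeration up to units $1+2x$ also presupposes knowing the norms of those units, which is the same question.

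Your own normalization closes both gaps. Multiply $f$ by $\omega^k$ so that $f(1)\equiv1\pmod2$; the extra factor $-1$ is unnecessary and would contribute $(-1)^8=1$ anyway. Write $f(\chi)=1+x_\chi$ with $x_\chi\in2O$. Orthogonality gives $e_1=\sum_\chi x_\chi=8(f_0-1)$. It also gives $p_2=\sum_\chi x_\chi^2=8\bigl(\sum_gf_g^2-2f_0+1\bigr)$, because $f^2$ has identity coefficient $\sum_gf_g^2$. Since $O/2O$ has characteristic two, $\sum_gf_g^2\equiv f(1)^2\equiv1\pmod2$. Hence $p_2\in16O$ and $e_2=(e_1^2-p_2)/2\in8O$, while $e_j\in2^jO\subseteq8O$ for $j\ge3$. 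So $\prod_\chi f(\chi)=1+e_1+\cdots+e_8\equiv1\pmod{8O}$, and the original product is $\equiv\omega^{-8k}\in\mu_3$. The same computation over $\Z$ gives $A\equiv1\pmod8$ without the citation.

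The paper packages both cases in one identity valid over any commutative ring: $\prod_hg_h\equiv g_0^8\pmod8$ for a Walsh transform $(g_h)$ with trivial-character value $g_0$. It proves this by writing $g_h=g_0+2r_h$, where $\sum_hr_h\equiv0\pmod4$ and $r_h$ is linear modulo two. It then uses that every unit of $O/8O$ has the form $\tau(1+2x)$ with $\tau\in\mu_3$ and $(1+2x)^4\equiv1$. Hence $g_0^8$ is a cube root of unity modulo $8O$.
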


\begin{proof}
For a Walsh transform $g_h$ of eight elements of a commutative ring, put $q=g_0$.
Then
\begin{equation}\label{c2c2c6:walsh-product}
 \prod_hg_h\equiv q^8\pmod8.
\end{equation}
Indeed, write $g_h=q+2r_h$.
Each nonconstant coefficient occurs four times in $\sum_hr_h$, so this sum is divisible by $4$.
Modulo $2$, the function $h\mapsto r_h$ is linear.
The square and mixed coefficients in $\sum_{h<j}r_hr_j$ are respectively $\binom42=6$ and $4\cdot4-2=14$.
The product expansion modulo $8$ gives \eqref{c2c2c6:walsh-product}.

For an attaining odd tuple, set $A=\prod_ha_h$ and $B=\prod_h\beta_h$.
Equation~\eqref{c2c2c6:walsh-product} gives $A\equiv1\pmod8$.
Every unit modulo $8O$ has the form $\tau(1+2x)$, with $\tau\in\mu_3$, and $(1+2x)^4\equiv1\pmod{8O}$.
Thus $B$ is congruent modulo $8O$ to a unique $\eta\in\mu_3$.
Taking $\beta=\eta^{-1}B$ preserves its norm and its residue modulo $\lambda$.
Multiplication of the paired congruences in \eqref{c2c2c6:gluing-equations} proves \eqref{c2c2c6:T-pair}.
Also $\Norm(\beta)\equiv1\pmod8$, so $D\equiv1\pmod8$.

Conversely, \eqref{c2c2c6:T-pair} and $D\equiv1\pmod8$ imply
$A\equiv1\pmod8$. The construction \eqref{eq:normalized-pair-construction}
with $P=\C{2}^3$ gives \eqref{c2c2c6:one-pair-polynomial}.
\end{proof}

The condition $T$ is a small arithmetic test.
Define the finite groups
\[
 Q_{16}=(O/8\lambda O)^\times/\mu_6,\qquad
 Q_8=(O/8O)^\times/\mu_6.
\]
Appendix~\ref{c2c2c6:quotient-data} gives generators proving
$Q_{16}\simeq\C{4}\times\C{2}^2$ and $Q_8\simeq\C{4}\times\C{2}$.
Put $\kappa=[17]\in Q_{16}$ and $\ell=[\lambda]\in Q_8$.
For either group $Q$, a prime ideal $\mathfrak q\mid q$, with $q\ne2,3$, contributes the class of a generator $\pi_{\mathfrak q}$ at cost $f_{\mathfrak q}$, where $\Norm(\pi_{\mathfrak q})=q^{f_{\mathfrak q}}$.
All prime ideals above $q$ are included.
Let $\mathcal R_Q(n)$ be the sum of the resulting prime-state sets with costs at $q$ bounded by $v_q(n)$, and set $\mathcal R_Q(1)=\{0\}$.
By \eqref{eq:two-group-budget}, the budgets may be capped at $10$ for
$Q_{16}$ and $8$ for $Q_8$.

\begin{proposition}\label{c2c2c6:T-arithmetic}
Write $t=3^bm$, with $t$ signed and odd and $\gcd(m,6)=1$.
Then $T(t)$ is given by
\begin{equation}\label{c2c2c6:T-table}
\begin{array}{c@{\quad}l}
 b&\text{condition}\\ \midrule
 0&\chi_3(t)\kappa\in\mathcal R_{Q_{16}}(|t|),\\
 1&\text{false},\\
 2,3,4&\mathcal R_{Q_8}(|m|)\cap\{-y\ell:1\le y<b\}\ne\varnothing,\\
 b\ge5&\text{true}.
\end{array}
\end{equation}
Here $\chi_3(t)=0$ when $t\equiv1\pmod3$, and $\chi_3(t)=1$ otherwise.
Every affirmative row constructs a pair in \eqref{c2c2c6:T-pair}.
\end{proposition}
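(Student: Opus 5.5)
The plan is to turn $T(t)$ into a class condition in $Q_{16}$ or $Q_8$ and then apply the prime-allocation mechanism of Theorem~\ref{thm:prime-allocation}, with $O$ as the only non-rational position. The rational factor $A$ acts as the distinguished position that absorbs the sign and any unused primes. The global units of $O$ are $\mu_6$, and they preserve norms. So whether a given ideal $\beta O$ of norm dividing $t$ contains an admissible $\beta$ depends only on the class of a generator modulo $\mu_6$, in $(O/8\lambda O)^\times/\mu_6$ or in $(O/8O)^\times/\mu_6$.

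First I settle the $3$-adic bookkeeping. Suppose $3\nmid A$. Then $\lambda\nmid\beta$, because $A\equiv\beta\pmod\lambda$. Conversely, if $\lambda\mid\beta$ then $\lambda\mid A$, so $3\mid A$. Hence either $b=0$, or $3\mid A$ and $\lambda\mid\beta$ together, which forces $b\ge2$. This disposes of the row $b=1$.

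Take $b=0$. Here $\Norm(\beta)\equiv1\pmod 3$, so $A\equiv t\pmod3$. The two conditions $\beta\equiv1\pmod{8O}$ and $\beta\equiv A\pmod\lambda$ combine by Chinese remainders into a single congruence modulo $8\lambda O$. Its target residue is $1$ when $t\equiv1\pmod 3$. Otherwise it is $17$, since $17\equiv1\pmod8$ and $17\equiv-1\pmod3$. The target class in $Q_{16}$ is therefore $\chi_3(t)\kappa$. Now write $\beta$ as a unit times a product of the chosen prime generators. The norm of $\beta$ must divide $|t|$, which gives exactly the cost bounds defining $\mathcal R_{Q_{16}}(|t|)$. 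The factor $A=t/\Norm(\beta)$ is then an odd signed integer carrying the sign. Conversely, a word reaching $\chi_3(t)\kappa$ gives a $\beta$ that some sixth root of unity corrects into $\beta\equiv1\pmod{8O}$ with the right residue mod $\lambda$. This argument is the same as the proof of Theorem~\ref{thm:prime-allocation}.

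Now take $b\ge2$. Write $v_3(A)=a\ge1$ and $\beta=\lambda^y\gamma$ with $\gamma$ prime to $3$, so that $y=b-a$ lies in $1\le y<b$. The condition $A\equiv\beta\pmod\lambda$ is now automatic, because both sides vanish. What remains is that $\lambda^y\gamma\equiv1\pmod{8O}$ up to $\mu_6$, that is, $[\gamma]=-y\ell$ in $Q_8$, with $\Norm(\gamma)$ dividing $|m|$. This gives the rows $b=2,3,4$ via $\mathcal R_{Q_8}(|m|)$. For $b\ge5$ I choose $y=4$ and the empty word. This needs $4\ell=0$ in $Q_8$, which follows since $Q_8\simeq\C{4}\times\C{2}$ has exponent $4$. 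Explicitly, $\lambda^4=9\omega^2$, and $9\equiv1\pmod8$, so $\lambda^4\equiv\omega^2\in\mu_6$ modulo $8O$. Then $A=t/3^4$ satisfies $v_3(A)=b-2\ge1$, as required. Each affirmative case produces $(A,\beta)$ explicitly.

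The main obstacle is the finite structure: that $Q_{16}$ and $Q_8$ are exactly the relevant unit quotients, and that $\kappa$, $\ell$ and the $Q_8$ targets $-y\ell$ are as stated. I would rely on the generators in Appendix~\ref{c2c2c6:quotient-data} and check $\lambda^4\equiv\omega^2\pmod{8O}$ directly. One further point needs care: every prime above $q$ must be allowed in either position, and the leftover part of $|t|$ must be placed into $A$ rather than into $\beta$.
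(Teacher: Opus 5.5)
Your proposal is correct and follows essentially the same route as the paper. For $b=0$ you read off the CRT target $\chi_3(t)\kappa$ in $Q_{16}$ and use unique factorization in $O$; for $b>0$ the paired congruence forces $3\mid A$ and $\lambda\mid\beta$ together, and $\beta=\lambda^y\gamma$ gives $[\gamma]=-y\ell$ in $Q_8$; and $\beta=\omega\lambda^4=9$ settles $b\ge5$. One small slip: with $A=t/3^4$ you get $v_3(A)=b-4$, not $b-2$, and this is at least one precisely because $b\ge5$.
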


\begin{proof}
The ring $O$ has class number one and unit group $\mu_6$ by Lemma~\ref{lem:fields}.
Unique ideal factorization therefore identifies $\mathcal R_Q(|m|)$ with the classes of elements whose positive norms divide $|m|$.
If $b=0$, then $\Norm(\beta)\equiv1\pmod3$, so $A\equiv t\pmod3$.
The required residue of $\beta$ is one modulo $8O$ and $t$ modulo $\lambda$, which gives the first row.
Conversely, a representing prime word can be multiplied by a unit to have exactly those residues; then $A=t/\Norm(\beta)$ has the required sign and congruence.

If $b>0$, the paired congruence requires both $3\mid A$ and $\lambda\mid\beta$.
Writing $\beta=\lambda^y\gamma$ gives $1\le y<b$, $\Norm(\gamma)\mid|m|$, and $[\gamma]=-y\ell$ in $Q_8$.
These conditions are also sufficient after multiplication by a unit.
Finally, $\lambda^4=9\omega^2$ and $9\equiv1\pmod8$.
For $b\ge5$, the choice $\beta=9$ and $A=t/81$ proves the last row.
\end{proof}

\subsection{The finite test for the remaining even values}

The finite quotient must retain all eight positions.
Set
\begin{equation}\label{c2c2c6:Gamma}
 \Gamma=\prod_{h\in\F_2^3}
 \bigl((\Z/24\Z)^\times\times(O/8\lambda O)^\times\bigr).
\end{equation}
Let $L$ be the subgroup whose residues satisfy \eqref{c2c2c6:gluing-equations}.
Let $E^+$ consist of the component units
\[
 ((\epsilon_h,\xi_h))_h,\qquad
 \epsilon_h\in\{1,-1\},\quad \prod_h\epsilon_h=1,\quad \xi_h\in\mu_6.
\]
Define $H=\Gamma/(E^+L)$, write its law additively, and denote the quotient map by $\theta$.
For a component residue $v$, write $\theta_{h,1}(v)$ or $\theta_{h,3}(v)$ for its class with every other component one.
Its invariant factors are $\C{2}^{10}\times\C{4}$, as computed in
Appendix~\ref{c2c2c6:quotient-data}; in particular, it has order $4096$.

We first classify the raw valuations at $2$ and $3$ and their compatible
cofactor classes, then allocate the remaining primes.
For a positive ordered vector $e$, let $R_e$ consist of the classes of tuples satisfying
\begin{equation}\label{c2c2c6:R-definition}
 u_h\in\{1,7,13,19\},\qquad
 \gamma_h=17^{e_h\bmod2},\qquad
 W(2^{e_h}u_h)_h\in8\Z^8.
\end{equation}
For a positive ordered vector $f$, let $Q_f$ consist of the classes of tuples satisfying
\begin{equation}\label{c2c2c6:Q-definition}
 u_h=1,\qquad \gamma_h\equiv2^{f_h}\pmod\lambda,
 \qquad W(2^{f_h}\gamma_h)_h\in8O^8,
\end{equation}
where each $\gamma_h$ runs over $(O/8\lambda O)^\times$.
Put $R_0=Q_0=\{0\}$, and let $R_a$ and $Q_r$ be the unions of $R_e$ and $Q_f$ over vectors of totals $a$ and $r$.
Only $a\le31$ and $r\le15$ will occur.
Appendix~\ref{c2c2c6:profile-rules} gives smaller finite generation rules for these sets and retains explicit representatives.

The factors at three are independent of these profiles.
Put
\[
 d_h=\theta_{h,1}(19),\qquad \kappa_h=\theta_{h,3}(17),\qquad
 \eta_h(y)=\theta_{h,3}(\lambda^{-y}\bmod8O,\ 1\bmod\lambda).
\]
The final argument denotes a Chinese remainder residue.
For arrays $x,y$ with $(x_h,y_h)=(0,0)$ or $x_h,y_h>0$, and binary choices $z_h$ at their occupied positions, take the shift
\begin{equation}\label{c2c2c6:support-shift}
 \sum_h\bigl((x_h\bmod2)d_h+\eta_h(y_h)+z_h\kappa_h\bigr).
\end{equation}
At an unoccupied position, all three summands are zero.
Let $S_b$ consist of these shifts with $\sum_h(x_h+y_h)=b$; thus $S_0=\{0\}$ and $S_1=\varnothing$.
Appendix~\ref{c2c2c6:support-rule} gives an exact finite recurrence
and its padding rule.
Define
\begin{equation}\label{c2c2c6:C-kb}
 C_{k,b}=\bigcup_{a+2r=k}(R_a+Q_r+S_b),
 \quad
 a\in\{0,8\}\cup\Z_{\ge12},\quad
 r\in\{0,8\}\cup\Z_{\ge10}.
\end{equation}

Apply Theorem~\ref{thm:prime-allocation} with component maps
$\theta_{h,1},\theta_{h,3}$ and distinguished position $(0,1)$.
For signed $m$ prime to six, put $c(m)=\theta_{0,1}(m)$.
The prime moves are
\begin{equation}\label{c2c2c6:prime-moves}
\begin{aligned}
 &\theta_{h,1}(q)-\theta_{0,1}(q), &&h\ne0,\quad\text{cost }1,\\
 &\theta_{h,3}(\pi_{\mathfrak q})-f_{\mathfrak q}\theta_{0,1}(q),
 &&h\in\F_2^3,\ \mathfrak q\mid q,\quad\text{cost }f_{\mathfrak q}.
\end{aligned}
\end{equation}
Let $P_q(e)$ be the states reachable with cost at most $e$.
Every move costs at most two, so \eqref{eq:two-group-budget} gives
$P_q(e)=P_q(\min(e,26))$, since $2(10+3)=26$.
The required finite condition is
\begin{equation}\label{c2c2c6:finite-test}
 \left(c(m)+\sum_{q^e\parallel|m|}P_q(\min(e,26))\right)
 \cap C_{k,b}\ne\varnothing.
\end{equation}

\Needspace{22\baselineskip}
\begin{theorem}\label{c2c2c6:classification}
Zero belongs to $\Sdet{G}$.
For $D\ne0$, write $D=2^k3^bm=2^kn$, with $k,b\ge0$, $\gcd(m,6)=1$, and both $m$ and $n$ retaining the sign of $D$.
If $b=1$, or if $k\in\{1,2,3,4,5,6,7,9,10,11\}$, then $D\notin\Sdet{G}$.
Otherwise:
\begin{equation}\label{c2c2c6:classification-table}
\begin{array}{c@{\quad}l}
 \text{range}&D\in\Sdet{G}\text{ precisely when}\\ \midrule
 k=0&D\equiv1\pmod8\text{ and }T(D),\\
 k=8&n\equiv1\pmod4\text{ and }T(n),\\
 k=12&T(n),\\
 k\in\{14,15,17,18,19,21,23,25,27\}&T((-1)^kn),\\
 k\ge32&\text{always},\\
 k=24,\ b\ge3&\text{always},\\
 k\ge28,\ b\ge2&\text{always},\\
 k\ge14,\ b\ge5&\text{always},\\
 k=13,\ b\ge6&\text{always}.
\end{array}
\end{equation}
The remaining cases are the $27$ pairs
\begin{equation}\label{c2c2c6:exceptional-pairs}
 \begin{gathered}
 \{13\}\times\{0,2,3,4,5\}\ \cup\
 \{16,20,22,26\}\times\{0,2,3,4\}\\
 {}\cup\ \{24\}\times\{0,2\}\ \cup\
 \{28,29,30,31\}\times\{0\}.
 \end{gathered}
\end{equation}
For each of them, membership is exactly \eqref{c2c2c6:finite-test}.
Every affirmative condition constructs an element of $\Z[G]$ with determinant $D$.
\end{theorem}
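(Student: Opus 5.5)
The proof splits into a uniform reduction and a valuation-by-valuation analysis. The reduction rests on three earlier results: the gluing description of Lemma~\ref{c2c2c6:gluing}, the quotient $H=\Gamma/(E^+L)$ with $H\simeq\C{2}^{10}\times\C{4}$, and the prime-allocation Theorem~\ref{thm:prime-allocation}. Zero is realized by $F=0$. For $D\neq0$, write each component as a raw factor supported above $2$ and $3$ times a cofactor prime to six. Lemma~\ref{lem:unit-correction}, together with the Chinese remainder assembly~\eqref{eq:crt-assembly}, then says that $D$ is attained exactly when the cofactor class set of Theorem~\ref{thm:prime-allocation} meets the compatible set $C_{k,b}$ of~\eqref{c2c2c6:C-kb}. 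This is precisely~\eqref{c2c2c6:finite-test}. It gives the criterion for the $27$ exceptional pairs directly, once two facts are checked. First, the local class sets at $2$ are exactly $R_a$ and $Q_r$: a rational component of $2$-valuation $e_h$ and an Eisenstein component of norm $4^{f_h}$ can be normalized to the representatives in~\eqref{c2c2c6:R-definition}--\eqref{c2c2c6:Q-definition}. Second, the shifts~\eqref{c2c2c6:support-shift} exhaust the compatible configurations at $3$. I would prove both by the local stabilizer argument used in Proposition~\ref{cy24:cofactor}.

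The exclusions come next. For $b=1$, the congruence $a_h\equiv\beta_h\pmod\lambda$ pairs the $3$-divisibility of $a_h$ with that of $\beta_h$. Since $\Norm(\lambda)=3$, any occupied pair contributes valuation at least two. This is the statement $S_1=\varnothing$. For the $2$-adic gaps $k\in\{1,\dots,7,9,10,11\}$, I would use the Walsh condition $Wa\in8\Z^8$. If one $a_h$ is even, every $a_h$ is even. A short parity count of the Walsh transform, in the spirit of~\eqref{c2c2c6:walsh-product}, should show that the rational profiles have total $a\in\{0,8\}\cup\Z_{\ge12}$. The analogous argument over $O/8O$, with $\Norm$ doubling valuations, gives $r\in\{0,8\}\cup\Z_{\ge10}$. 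These are the restrictions built into~\eqref{c2c2c6:C-kb}. Combining them, $k=a+2r$ misses exactly the listed values.

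The closed-form rows come from the normalized pair. The case $k=0$ is Proposition~\ref{c2c2c6:normalized-pair}, whose condition $T(D)$ is made explicit in Proposition~\ref{c2c2c6:T-arithmetic}. For $k=8,12$ and the listed odd-type $k$, I would find a small family of \emph{seed} raw configurations of total $k$. One example is the rational profile $(1,\dots,1)$ for $k=8$; others are profiles of total $12$, or $a+2r=k$ with $r\ge10$. For each seed I would compute the $L$-stable compatible set and show it is a single coset. That coset should be the one forcing the remaining cofactor to be a normalized pair $(A,\beta)$ with $A\equiv\pm1$ and $\beta\equiv1\pmod{8O}$. The sign $(-1)^k$ and, for $k=8$, the condition $n\equiv1\pmod4$ would fall out of the congruence on $A$. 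To construct realizations, I would modify~\eqref{c2c2c6:one-pair-polynomial} by a seed polynomial with determinant $2^k$ supported away from the trivial character. Necessity would follow because every compatible class lies in $\theta$ of this coset.

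For the ``always'' rows, I would show $C_{k,b}=H$, so that $m$ can be placed entirely in position $(0,1)$. Alternatively, I would exhibit explicit polynomials as in~\eqref{cy24:even-tail}, multiplying by norm-$4$ or norm-$9$ units such as $\lambda^4=9\omega^2$ to climb in valuation. The main obstacle is this step: proving saturation $C_{k,b}=H$ at exactly the thresholds $k\ge32$; $k=24$ with $b\ge3$; $k\ge28$ with $b\ge2$; $k\ge14$ with $b\ge5$; and $k=13$ with $b\ge6$. The same work is needed to identify the single-coset strata. Both seem to require the finite enumeration of $R_a+Q_r+S_b$ in the order-$4096$ group. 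I would first try to reduce this to generation rules: show that $R_{a+2}\supseteq R_a+\theta(\text{profile shift})$ and that $S_{b}$ stabilizes for $b\ge5$. Then only the boundary values need checking, and I would take those from the appendix certificates.
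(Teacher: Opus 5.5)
Your reduction of the $27$ exceptional pairs to \eqref{c2c2c6:finite-test} and your $b=1$ exclusion follow the paper. The closed-form rows, however, are not actually proved.

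The missing observation is that the quadratic total $r$ is forced to be $0$ at $k=8,12$ and at the nine values $14,15,17,\dots,27$. This follows from the valuation sets of Lemma~\ref{c2c2c6:local-valuations}, which rest on the affine criterion \eqref{c2c2c6:affine-test} rather than a product congruence. Indeed $2r\in\{0,16,20,22,\dots\}$ never leaves an admissible rational total $k-2r$ there, so the mixed seeds with $r\ge10$ that you mention do not exist.

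With $r=0$, every $\beta_h$ is a $2$-adic unit, and \eqref{c2c2c6:walsh-product} gives $\prod_h\beta_h\equiv\eta\in\mu_3\pmod{8O}$. Write $\prod_ha_h=2^kA_0$ and multiply the eight paired congruences. Since $2\equiv-1\pmod\lambda$, this gives $(-1)^kA_0\equiv\eta^{-1}\prod_h\beta_h\pmod\lambda$. That is $T((-1)^kn)$, for every $b$ at once. At $k=8$ the mod-$4$ clause of the lemma adds $n\equiv1\pmod4$.

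Your plan never says how a coset condition in $H$ turns into $T$. The only bridge is this product map, and once you have it no coset computation is needed.

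Your construction also fails as stated. No integral seed of determinant $\pm2^k$ can be supported away from the trivial character, because (as you note) all rational components share parity. If instead you multiply a $2$-power seed by \eqref{c2c2c6:one-pair-polynomial}, you get only $2^k$ times an odd determinant, hence $2^k$ times something $\equiv1\pmod8$; note also $-2^8,-2^{12}\notin\Sdet{G}$. This misses, for example, $2^8\cdot13$, which is attained with $A=13$, $\beta=1$. The odd factor must sit in the same position as the $2$-power, as in the templates $(-2A,-2,\dots,-2)$ and $(4A,4,4,4,2,2,2,2)$. There the Walsh condition asks only $A\equiv1\pmod4$, respectively $A$ odd.

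Second, the saturation $C_{k,b}=H$ that you call the main obstacle is false whenever $r=0$ is forced. In particular it fails in the rows $k=13$, $b\ge6$, and $k\ge14$, $b\ge5$ at the nine values above. Reducing modulo $8O$ and multiplying the eight quadratic entries induces a surjection $\Pi\colon H\to Q_8$: it kills $E^+$ trivially, and it kills $L$ by \eqref{c2c2c6:walsh-product}. It vanishes on $R_k$, $d_h$ and $\kappa_h$, because $17\equiv1\pmod8$, while $\Pi(\eta_h(y))=-y\ell$. Hence $\Pi(C_{k,b})\subseteq\langle\ell\rangle$, a subgroup of index two in $Q_8$. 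For the same reason $\Pi(C_{k,4})=\{-\ell,-2\ell,-3\ell\}$, so these target sets are not single cosets either.

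These rows are nevertheless ``always'' because the specific cofactor is absorbed, not because the target is everything. For $k\ge14$ the paper uses the rational template with $\beta=9$: since $\lambda^4=9\omega^2$, $T$ is true for $b\ge5$. At $k=13$ the support table \eqref{c2c2c6:tail13-supports} moves the baseline $c(m)$ into $R_{13}$. Genuine saturation, via \eqref{c2c2c6:full-target-identities}, is used only at $k=24$, $b\in\{3,4\}$, and at $28\le k\le31$, $2\le b\le4$. The rows with $k\ge32$ come from the explicit high-two constructions.
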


\begin{proof}
The paired congruence at three forces $3\mid a_h$ and $\lambda\mid\beta_h$ to occur together.
Each occupied pair contributes at least two to $b$, excluding $b=1$.
Lemma~\ref{c2c2c6:local-valuations} gives rational and quadratic totals
\[
 a\in\{0,8\}\cup\Z_{\ge12},\qquad
 r\in\{0,8\}\cup\Z_{\ge10},\qquad k=a+2r.
\]
It proves the excluded $2$-adic valuations and shows that the quadratic total is zero at $k=8,12$ and at the nine values in the fourth row.

When $r=0$, normalize $\prod_h\beta_h$ as in Proposition~\ref{c2c2c6:normalized-pair}.
If $\prod_ha_h=2^kA_0$, multiplication of the paired congruences gives
$(-1)^kA_0\equiv\beta\pmod\lambda$.
Thus $T((-1)^kn)$ is necessary.
At $k=8$, the rational local product additionally requires $A_0\equiv1\pmod4$, equivalently $n\equiv1\pmod4$.
At $k=12$ there is no extra restriction.
For sufficiency, a pair $(A,\beta)$ from the indicated $T$ test gives the following component tuples:
\begin{equation}\label{c2c2c6:rational-templates}
\begin{array}{c@{\quad}l@{\quad}l}
 k&a&\beta\text{-tuple}\\ \midrule
 8&(-2A,-2,-2,-2,-2,-2,-2,-2)&(\beta,1,1,1,1,1,1,1),\\
 12&(4A,4,4,4,2,2,2,2)&(\beta,1,1,1,-1,-1,-1,-1),\\
 k\ge14&((-1)^{k+1}2^{k-11}A,-8,4,4,2,2,2,2)
 &(\beta,1,1,1,-1,-1,-1,-1).
\end{array}
\end{equation}
Their Walsh sums are divisible by eight, and the paired residues match at every position.
Their determinants are respectively $2^8A\Norm(\beta)$, $2^{12}A\Norm(\beta)$, and $(-1)^k2^kA\Norm(\beta)$.
The last row also proves the stated high-three family by taking $\beta=9$.
Appendix~\ref{c2c2c6:tails} proves the high-two and $k=13$ families.
The full-target identities \eqref{c2c2c6:full-target-identities} give the rows at $k=24$ and $28\le k\le31$; the preceding families extend them to the stated ranges.

For the pairs \eqref{c2c2c6:exceptional-pairs}, Proposition~\ref{c2c2c6:assembly} proves that $C_{k,b}$ is exactly the set of cofactor classes admitting the required powers of $2$ and $3$.
Theorem~\ref{thm:prime-allocation}, applied to \eqref{c2c2c6:prime-moves}, gives exactly the classes of all tuples with signed cofactor norm $m$.
Their intersection proves necessity and sufficiency of \eqref{c2c2c6:finite-test}.
A witnessing prime word and profile anchor, followed by
Lemma~\ref{lem:unit-correction} and \eqref{c2c2c6:inverse}, construct
the $24$ integer coefficients with determinant $D$.
\end{proof}

For $B\subseteq H$, its translation stabilizer
$P_B=\{g\in H:B+g=B\}$ is a subgroup, and $B$ is a union of its
cosets. With $\pi_B\colon H\to H/P_B$, this gives, for every $A\subseteq H$,
\begin{equation}\label{c2c2c6:target-stabilizers}
 A\cap B\ne\varnothing\quad\Longleftrightarrow\quad
 \pi_B(A)\cap\pi_B(B)\ne\varnothing.
\end{equation}
Thus \eqref{c2c2c6:finite-test} can be tested in $H/P_{C_{k,b}}$.
The $27$ pairs give $18$ distinct targets. Their exact stabilizers are
provided in the supplement; for example, $|H/P_B|=16$ at $(k,b)=(24,2)$,
whereas $P_B=\{0\}$ at $(16,0)$ and $(22,0)$.
Retaining the original generators along a projected prime word gives
its full $H$-class in $B$, hence the same constructive lift.

\begin{corollary}[Four one-move tests]\label{c2c2c6:one-move}
For $k=28,30$ and $b=0$, the quotient $H/P_{C_{k,0}}$ is $\C{2}^6$;
for $k=29,31$, it is $\C{2}^5$.
In each case the projected target omits exactly one class $t$.
Thus \eqref{c2c2c6:finite-test} holds if and only if the projected baseline differs
from $t$, or if some move in \eqref{c2c2c6:prime-moves} has nonzero
projection and cost at most $v_q(|m|)$.
\end{corollary}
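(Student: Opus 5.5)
The argument splits into a finite computation and a formal reduction. The finite part computes, for $k=28,\dots,31$ and $b=0$, the target $B=C_{k,0}$ from \eqref{c2c2c6:C-kb}. Here $S_0=\{0\}$, so $B=\bigcup_{a+2r=k}(R_a+Q_r)$ with $a\in\{0,8\}\cup\Z_{\ge12}$ and $r\in\{0,8\}\cup\Z_{\ge10}$. The sets $R_a$ and $Q_r$ are generated by the finite rules of Appendix~\ref{c2c2c6:profile-rules}.

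In $H\simeq\C{2}^{10}\times\C{4}$, I would compute the translation stabilizer $P_B$ by testing each of the $4096$ elements $g$ for $B+g=B$. I would then verify that $|H/P_B|=64$ for $k=28,30$ and $|H/P_B|=32$ for $k=29,31$, and that $H/P_B$ has exponent two, for example by checking $2H\subseteq P_B$. Finally I would check that $\pi_B(B)$ has exactly $|H/P_B|-1$ elements, and record the missing class $t$. This is an exact certificate computation of the kind described in Section~\ref{sec:computation}.

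The formal part starts from \eqref{c2c2c6:target-stabilizers}. By that equivalence, \eqref{c2c2c6:finite-test} holds iff the projected sumset $\pi_B(c(m))+\sum_q\pi_B(P_q(\min(e,26)))$ is not contained in $\{t\}$. Every $P_q(e)$ contains the empty word $0$, so the projected sumset contains $\pi_B(c(m))$. Hence the test succeeds whenever the projected baseline differs from $t$.

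Suppose instead that $\pi_B(c(m))=t$. If some move $\mu$ at a prime $q$ with $q^e\parallel|m|$ has cost $\le e$ and $\pi_B(\mu)\ne0$, then the single-move word gives $t+\pi_B(\mu)\ne t$, and the test succeeds. Conversely, suppose every move of cost at most $v_q(|m|)$ has zero projection. Any word of total cost $\le e$ consists only of moves each of cost $\le e$, so all of its moves project to zero and the word projects to zero. The projected sumset is then $\{t\}$, which is disjoint from $\pi_B(B)$.

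This gives the stated equivalence. For the constructive claim, I would lift the witnessing one-move word (or the empty word) with its original generators, as in the remark following \eqref{c2c2c6:target-stabilizers}. The main obstacle is the certified computation of $P_B$ and the omitted class, since it requires enumerating the profile unions $R_a+Q_r$ exactly. The reduction itself uses only that costs are positive and that the set of admissible moves is monotone in the budget.
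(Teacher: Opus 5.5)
Your proposal is correct and takes the same route as the paper: a finite target computation, then the one-move argument from the omitted class. The paper records complements of sizes $64,128,64,128$, each a single coset of the stabilizer, which is equivalent to your stabilizer enumeration and count of $\pi_B(B)$. You should state explicitly, as the paper does, that every move costs at most two, so a move of cost at most $v_q(|m|)$ already lies within the cap $\min(v_q(|m|),26)$ of \eqref{c2c2c6:finite-test}.
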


\begin{proof}
The full target calculation gives complements of sizes $64,128,64,128$,
each a single stabilizer coset. From the omitted class, one available
nonzero move suffices; if all available moves project to zero, no word
leaves it. Every move costs at most two.
\end{proof}


\section{Comparison of the three determinant sets}\label{sec:comparison}

The pure powers of two give a short comparison that retains the sign
restrictions of the two noncyclic groups.

\Needspace{15\baselineskip}
\begin{corollary}\label{cor:pure-two}
Among nonzero integers whose absolute value is a power of two,
the three determinant sets are as follows:
\begin{align*}
 S(\C{24})&:\quad \{-1,1\}\ \cup\ \{\pm2^j:j\ge8\},\\
 S(\C{2}\times\C{12})&:\quad
 \{1,2^8,2^{12}\}\ \cup\ \{(-2)^j:13\le j\le19\}
 \ \cup\ \{\pm2^j:j\ge20\},\\
 S(\C{2}\times\C{2}\times\C{6})&:\quad
 \{1,2^8,2^{12}\}\ \cup\ \{(-2)^j:14\le j\le31\}
 \ \cup\ \{\pm2^j:j\ge32\}.
\end{align*}
\end{corollary}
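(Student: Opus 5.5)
The corollary specializes the three classification theorems to $m=\pm1$ and $b=0$, so $D=\pm2^k$. The plan is to read off, group by group, which valuation rows admit $m=\pm1$. For $m=\pm1$ the prime-allocation sums are empty, so every cofactor test collapses to a single baseline class. In the cyclic case this class is $0$; in the noncyclic cases it is $c(\pm1)$, respectively $h_{j_0}(\pm1)$.

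For $\C{24}$ I use Theorem~\ref{cy24:main}. The value $k=0$ gives $\pm1$. The rows $1\le k\le4$ are excluded, and $k\ge8$ is always attained with either sign. For $k\in\{5,6,7\}$ we have $\mathcal R(1)=\{0\}$, and $0\notin B'\supseteq B$ by \eqref{cy24:targets}. Hence $\pm2^5,\pm2^6,\pm2^7$ are excluded, since membership is sign-independent.

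For $\C{2}^2\times\C{6}$ I use Theorem~\ref{c2c2c6:classification} together with Proposition~\ref{c2c2c6:T-arithmetic}, noting that $T(\pm1)$ reduces to the $b=0$ row.
\begin{itemize}
\item $k=0$ requires $D\equiv1\pmod8$, so only $1$ survives, and $T(1)$ holds via $A=1,\beta=1$.
\item $k=8$ requires $n\equiv1\pmod4$, so only $n=1$, and $T(1)$ holds.
\item $k=12$ requires $T(n)$. For $n=1$ this is true. For $n=-1$ it requires $\chi_3(-1)\kappa=\kappa\in\mathcal R_{Q_{16}}(1)=\{0\}$, which fails because $\kappa=[17]\neq0$ in $Q_{16}$. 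I expect to justify this from the appendix generators, or directly: $17\equiv 1\pmod 8$, but $17\not\equiv\pm1,\pm\omega^{\pm1}\pmod{8\lambda}$.
\item For the nine listed $k$, the test $T((-1)^kn)$ holds exactly when $(-1)^kn=1$, that is $D=(-2)^k$, by the same computation.
\item $k\ge32$ is always attained.
\item The remaining values $k=13,16,20,22,24,26,28,\dots,31$ with $b=0$ are among the exceptional pairs and require the finite test with $m=\pm1$: is $\theta_{0,1}(\pm1)\in C_{k,0}$? The claimed answer is $D=(-2)^k$ throughout $14\le k\le31$, while $k=13$ is excluded in both signs.
\end{itemize}

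For $\C{2}\times\C{12}$ I use Theorem~\ref{c2c12:classification} in the same way. The cases $k=0$ and $1\le k\le7$ are immediate from \eqref{c2c12:two-tail}, with $k=0$ giving $h_{j_0}(\pm1)\in T_0=\{0\}$. Whether $h_{j_0}(-1)=0$ is decided by the odd congruence $D\equiv1\pmod 8$ noted after that theorem. The values $k\ge22$ are unrestricted. For $8\le k\le21$ the criterion becomes $h_{j_0}(\pm1)\in T_k$. The explicit signed example for $-2^{13}$ confirms the case $k=13$ negative.

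\textbf{Main obstacle.} The real work is evaluating the membership $h_{j_0}(\pm1)\in T_k$ (respectively $\theta_{0,1}(\pm1)\in C_{k,0}$) for the exceptional $k$. This is a finite computation in the quotient of order $512$ (respectively $4096$) against the tabulated sets $T_k$ and $C_{k,0}$. Two pieces of the claimed answer must come out of it: that $T_k$ and $C_{k,0}$ are empty at $k=9,10,11$ (the $\C{2}\times\C{12}$ answer skips $9$–$11$), and that $T_k$ contains both baseline classes from $k=20$ on. I would first compute $\delta=h_{j_0}(-1)$, which should be the nonzero sign class. I would then check, using the appendix representatives, that $T_k\cap\{0,\delta\}$ equals $\{0\}$ at $8,12$, equals the class of $(-1)^k$ for $13\le k\le19$, and equals both classes for $k\ge20$. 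The analogous check with $C_{k,0}$ gives the $\C{2}^2\times\C{6}$ claims.

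Where possible, I would supply explicit sign-realizing polynomials instead of relying on the set lookup. Examples are the templates \eqref{c2c2c6:rational-templates} with $A=\beta=1$, and the $-2^{13}$ example. I would trust the certified tables for the exclusions.
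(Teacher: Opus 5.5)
Your route is the paper's own. You specialize each classification to $b=0$, $m=\pm1$, and note that there are no prime moves, so only the two signed baseline classes need testing. You then combine the finite intersections of Appendices~\ref{c2c12:two-data} and~\ref{c2c2c6:pure-two-data} with the valuation exclusions, the odd congruence and the high-valuation constructions. Your $\C{24}$ and $\C{2}^2\times\C{6}$ arguments are sound. This includes the check that $T(-1)$ fails: directly, $\Norm(\beta)=1$ and $\beta\equiv1\pmod{8O}$ force $\beta=1$ and $A=-1$, which contradicts $A\equiv\beta\pmod\lambda$.

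One step, however, fails as you state it. You expect $T_k$ to be empty for $k=9,10,11$ in the $\C{2}\times\C{12}$ case, but it is not. Table~\ref{c2c12:initial-two} gives $|T_9|=|T_{10}|=8$ and $|T_{11}|=24$, and Theorem~\ref{c2c12:classification} excludes only $1\le k\le7$. Indeed, any supplied tuple $v$ with $h(v)\in T_9$ yields $2^9\nu(v)\in\Sdet{\C{2}\times\C{12}}$ by Lemma~\ref{c2c12:assembly}, so these valuations genuinely occur. Emptiness at $9,10,11$ holds only for $\C{2}^2\times\C{6}$, by Lemma~\ref{c2c2c6:local-valuations}. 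The exclusion of $\pm2^9,\pm2^{10},\pm2^{11}$ from $\Sdet{\C{2}\times\C{12}}$ therefore rests on a finer fact: neither $0$ nor $h_{j_0}(-1)$ lies in $T_9$, $T_{10}$ or $T_{11}$. Your checklist should test both baseline classes against $T_k$ for every $8\le k\le21$, not only at $k=8,12$ and $k\ge13$. The paper records exactly this finite check at the end of Appendix~\ref{c2c12:two-data}. For $k\le21$, the positive exponents are $0,8,12,14,16,18,20,21$ and the negative ones are $13,15,17,19,20,21$.
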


\begin{proof}
For $\C{24}$, take $b=0$ and $m=1$ in Theorem~\ref{cy24:main}.
Then $\mathcal R(1)=\{0\}$ meets none of the critical target sets.
For the noncyclic groups, cofactors $1$ and $-1$ admit no prime moves,
so membership reduces to testing the two signed baseline classes in
the local sets. Appendix~\ref{c2c12:two-data} gives these intersections
through exponent $21$, and Appendix~\ref{c2c2c6:pure-two-data} gives
the second group's finite intersections. The valuation exclusions,
odd congruences, and high-valuation constructions complete both lists.
\end{proof}

Thus $-2^{13}$ distinguishes the two noncyclic groups, and
$-2^{20}$ distinguishes their sign thresholds. The odd sets admit
a stronger comparison.

Pinner and Smyth~\cite[Theorem~3.1]{PinnerSmyth2020} show that
$S(\C{2}^3)\subsetneq S(\C{2}\times\C{4})$, while both odd subsets
equal $8\Z+1$. Theorem~\ref{thm:odd-inclusion} shows that adjoining
$\C{3}$ separates these odd subsets; its proof transfers normalized
pairs and does not follow from the subgroup relation
in~\cite[Theorem~1.4]{YamaguchiDedekind2024}.

\begin{theorem}\label{thm:odd-inclusion}
Put $E=\C{2}\times\C{2}\times\C{6}$ and
$G=\C{2}\times\C{12}$. Then
\[
 S_{\mathrm{odd}}(E)\subsetneq S_{\mathrm{odd}}(G).
\]
For every integer $r\ge0$,
\[
 D_r=3^4\cdot5\cdot13\cdot73^{r+1}
 \in S_{\mathrm{odd}}(G)\setminus S_{\mathrm{odd}}(E).
\]
\end{theorem}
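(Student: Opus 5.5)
The inclusion should follow directly from the normalized pair, and the difference should come from comparing Proposition~\ref{c2c2c6:T-arithmetic} with the criterion of Theorem~\ref{c2c12:classification}.

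\emph{Inclusion.} Let $D$ be odd with $D\in\Sdet{E}$. By Proposition~\ref{c2c2c6:normalized-pair}, $D\equiv1\pmod8$ and $T(D)$ holds, witnessed by some $D=A\Norm(\beta)$ with $\beta\equiv1\pmod{8O}$ and $A\equiv\beta\pmod\lambda$. Since $\Norm(\beta)\equiv1\pmod8$, we get $A\equiv1\pmod8$. Writing $A=1+8a$ and $\beta=1+8b+8c\omega$, the pair satisfies the hypotheses of \eqref{eq:normalized-pair-construction}, which holds for every abelian $P$ of order $8$. Taking $P=\C{2}\times\C{4}$, so that $P\times\C{3}\simeq G$, the element $F=1+J_P(r+sz+tz^2)$ has $D_G(F)=A\Norm(\beta)=D$. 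Hence $S_{\mathrm{odd}}(E)\subseteq S_{\mathrm{odd}}(G)$. Properness then follows from the family $D_r$.

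\emph{Exclusion of $D_r$ from $E$.} Here $D_r\equiv1\pmod8$, since $81\equiv65\equiv73\equiv1\pmod8$, so Proposition~\ref{c2c2c6:normalized-pair} reduces membership to $T(D_r)$. With $b=4$ and $m=5\cdot13\cdot73^{r+1}$, the row $b=2,3,4$ of \eqref{c2c2c6:T-table} asks whether $\mathcal R_{Q_8}(m)$ meets $\{-\ell,-2\ell,-3\ell\}$. The prime $5\equiv2\pmod3$ is inert in $O$, so its only move costs $2>v_5(m)=1$ and it contributes only $\{0\}$. The primes $13$ and $73$ split. At $13$ I allow the empty word or one of the classes $[\pi_{13}],[\bar\pi_{13}]$. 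At $73$, because $r$ is arbitrary and the budget cap $8$ applies, I allow the full subgroup of $Q_8\simeq\C{4}\times\C{2}$ generated by $[\pi_{73}]$ and $[\bar\pi_{73}]$.

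The plan is to compute these classes explicitly from the generators in Appendix~\ref{c2c2c6:quotient-data}, together with $\ell=[\lambda]$. I expect the $73$-subgroup to be small, probably trivial or of order two, since $73\equiv1\pmod{8}$ and $73\equiv1\pmod3$. I then check that no element of $\{0,[\pi_{13}],[\bar\pi_{13}]\}$ plus that subgroup equals $-y\ell$ for $y=1,2,3$. This finite check is the main obstacle. The choice of generators must be handled carefully modulo $\mu_6$, and the check must be done uniformly in $r$, which the subgroup formulation and the budget cap ensure.

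\emph{Membership of $D_r$ in $G$.} I use \eqref{c2c12:criterion} with $k=0$ and $b=4$, which asks that $\mathcal P(m)\cap C_{0,4}\ne\varnothing$. In $G$ the Gaussian components $d=4,12$ provide degree-one moves at $5$, since $5\equiv1\pmod4$, and also at $13$ and $73$. Rational components give further moves at cost one. The extra freedom is therefore the move at $5$ that $E$ lacks.

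To make the witness independent of $r$, I place $73^{r}$ in the distinguished rational position $j_0$, which is legitimate by Theorem~\ref{thm:prime-allocation}. Since $73\equiv1\pmod{24}$, its residue is trivial in every local factor of the conductor quotient, so $h_{j_0}(73)=0$. This reduces the question to the single value $3^4\cdot5\cdot13\cdot73$. I then exhibit one explicit prime word, for instance a Gaussian prime above $5$ in a $d=4$ or $d=12$ position together with suitable placements of $13$ and $73$. I verify that its class lies in $C_{0,4}$ using the anchors of Appendix~\ref{c2c12:three-data}. Lemma~\ref{c2c12:quotient-lemma} then produces integer coefficients, and multiplying by the factor $73^r$ kept in position $j_0$ preserves integrality and yields $D_r$ for every $r\ge0$.
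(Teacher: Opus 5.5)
Your inclusion argument is exactly the paper's. Your exclusion argument is the paper's unique-factorization argument routed through Proposition~\ref{c2c2c6:T-arithmetic}, and the finite check you flag as the main obstacle is in fact immediate. Since $9+\omega\equiv-\omega^2$ and $8-\omega\equiv-\omega\pmod{8O}$, both primes above $73$ have class $0$ in $Q_8$, so the $73$-part is trivial for every $r$. The norm induces a homomorphism $Q_8\to(\Z/8\Z)^\times$ with $\Norm(\lambda)=3$ and $\Norm(4+\omega)=\Norm(3-\omega)=13\equiv5$. Hence $-\ell$ and $-3\ell$, with residue $3$, avoid $\{0,[4+\omega],[3-\omega]\}$, whose residues are $1,5,5$. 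The remaining candidate $-2\ell=[\lambda^2]=[3]$ has residue $1$, so it could only equal $0$; but it is nonzero because $3\not\equiv\zeta\pmod{8O}$ for every $\zeta\in\mu_6$. This is precisely the paper's conclusion that $\beta\bmod 8O$ lies in $3\mu_6$.

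The genuine gap is the membership $D_0=3^4\cdot5\cdot13\cdot73\in\Sdet{G}$. You propose to pick a prime word and check that its class lies in $C_{0,4}=S_4$. However, the component maps $h_j$ and the set $S_4$ exist only through the machine-computed quotient of order $512$. Your proposal neither names a word nor computes a class, so no witness is produced.

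Your suggested placement is also partly infeasible. Since $5\not\equiv1\pmod{12}$, the primes above $5$ in $\Q(\zeta_{12})$ have residue degree two and cost $2>v_5(m)$. Only the Gaussian positions $(\sigma,4)$ can take the $5$; the component $d=12$ is not Gaussian.

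The paper bypasses the quotient with the explicit element $F=1+AB^6-B^6+B^{11}+B^3$. In the order $d=1,2,3,4,6,12$, its component norms are $(3,-1,3,5,1,1)$ at $A=1$ and $(1,-3,1,13,3,73)$ at $A=-1$, computed by reduction modulo $\Phi_d$. This gives $D_G(F)=384345=D_0$. You need such a witness, or an actual class computation.

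Once a witness is in hand, your device of multiplying the $j_0$ entry by $73^r\equiv1\pmod{24}$ is valid, since the change lies in the conductor $24\Z$. It coincides with the paper's $F_r=F+\frac{73^r-1}{8}J_G$, which multiplies the trivial character value $3$ by $73^r$.
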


\begin{proof}
Write $O=\Z[\omega]$ and $\lambda=1-\omega$ as before.
By Proposition~\ref{c2c2c6:normalized-pair}, an odd determinant of $E$
has the form $A\Norm(\beta)$, where
\[
 A\equiv1\pmod8,\qquad \beta\equiv1\pmod{8O},\qquad
 A\equiv\beta\pmod\lambda.
\]
The construction \eqref{eq:normalized-pair-construction} realizes
this pair in $\Z[P\times\C{3}]$ for $P=\C{2}\times\C{4}$,
proving the inclusion.

For strictness, let $G=\langle A,B:A^2=B^{12}=1,\ AB=BA\rangle$
and put
\[
 F=1+AB^6-B^6+B^{11}+B^3.
\]
Evaluation at $A=1,-1$ gives
$1+B^{11}+B^3$ and $1-2B^6+B^{11}+B^3$.
Their cyclotomic component norms are
\[
\begin{array}{c|rrrrrr|r}
 d&1&2&3&4&6&12&\text{product}\\ \hline
 A=1 &3&-1&3&5&1&1&-45\\
 A=-1&1&-3&1&13&3&73&-8541 .
\end{array}
\]
These values follow by reduction modulo $\Phi_d(B)$.
Consequently $D_G(F)=384345=3^4\cdot5\cdot13\cdot73$.
Since $73\equiv1\pmod8$, the polynomial
\[
 F_r=F+\frac{73^r-1}{8}J_G
\]
has integer coefficients. Adding the multiple of $J_G$ changes only
the trivial character value, from $3$ to $3\cdot73^r$.
Thus $D_G(F_r)=D_r$ for every $r\ge0$.

It remains to exclude every possible normalized pair for $D_r$.
Suppose $D_r=A\Norm(\beta)$ with $\beta\equiv1\pmod{8O}$ and
$A\equiv\beta\pmod\lambda$.
The $3$-adic valuation forces $1\le y=v_\lambda(\beta)\le3$.
The prime five is inert in $O$ and occurs only once in $D_r$, so it
does not divide $\Norm(\beta)$. The other primes split as
\[
 13=(4+\omega)(3-\omega),\qquad
 73=(9+\omega)(8-\omega).
\]
Unique factorization gives
\[
 \beta=\varepsilon\lambda^y\pi(9+\omega)^j(8-\omega)^k,
 \qquad
 \pi\in\{1,4+\omega,3-\omega\},\quad j+k\le r+1,
\]
with $\varepsilon\in\mu_6$. Its norm modulo eight is either $3^y$
or $5\cdot3^y$. The required residue one forces $y=2$ and $\pi=1$.
But $9+\omega$ and $8-\omega$ reduce modulo $8O$ to the global
units $1+\omega$ and $-\omega$.
Since $\lambda^2=-3\omega$, it follows that
$\beta\bmod8O$ lies in $3\mu_6$.
None of these six residues is one, a contradiction.
\end{proof}

Normalization permits the transfer between the two groups, while the
family $D_r$ shows that the transfer is not reversible.


\section{Exact computations and reproducibility}\label{sec:computation}

The proofs use full-unit certificates (Lemma~\ref{lem:fields}), the
rational coset formula, and exhaustive finite-ring calculations
(Appendices~A--C).
The recorded environment is SageMath~10.9, Python~3.12.13 and
PARI/GP~2.17.3~\cite{Sage,PARI}; full-unit certification uses unconditional
\texttt{bnfcertify} with flag zero. Class number one follows from
Masley's theorem, independently of this computation. The supplementary \texttt{classify.py}
returns a decision and, for each admitted integer, $24$ coefficients.
The file \texttt{proof\_computation\_index.md} links each mathematical
output to its generator, fresh command and certificate; the dispatcher
\texttt{rebuild\_proof\_data.py all} runs all proof-data stages.
Exact certificates and source hashes accompany the programs.

Separate regression tests compare reconstructed coefficients with
determinants of the original $24\times24$ integer matrices, including
both signs, valuation boundaries and representative family parameters.
These detect implementation and transcription errors. Completeness
follows from the unit-correction arguments, the rational coset formula,
the quadratic and support calculations with their proved padding rules, and
Theorem~\ref{thm:prime-allocation}.

Table~\ref{tab:implementation-bounds} compares the prime budgets.
A larger retained cap gives the same reachable set by
\eqref{eq:two-group-budget}. For the retained split/inert caps, the
supplement proves stabilization by exhausting the conductor residue types and checking
closure under every move. Thus the comparison covers every exponent.
The public third-group test uses the target quotients
in~\eqref{c2c2c6:target-stabilizers}; its four one-move cases preserve the
membership decision, without requiring the whole reachable set.

\begin{table}[tb]
\centering
\caption{Prime budgets in the text and code. The notation $4/2$ means
split/inert in $\Q(\omega)$. Program and certificate paths are given in
\texttt{implementation\_correspondence.md}.}
\label{tab:implementation-bounds}
\begin{tabular}{@{}lccp{4.1cm}@{}}
\toprule
Predicate & Text & Code & Equivalence\\
\midrule
\eqref{cy24:prime-test} & $10$ & $30$ & Zero-sum bound
\eqref{eq:two-group-budget}\\
\eqref{c2c12:prime-predicate} & $22$ & $1022$ & Zero-sum bound
\eqref{eq:two-group-budget}\\
\eqref{c2c2c6:finite-test}, public & $26$ & $26$ &
Exact target preimages\\
Four one-move cases & $26$ & $2$ &
Corollary~\ref{c2c2c6:one-move}\\
\eqref{c2c2c6:finite-test}, retained & $26$ & $4/2$ &
Complete residue-type graphs\\
\bottomrule
\end{tabular}
\end{table}

The correspondence file also covers the smaller normalized-pair tests.


\section*{Statements and declarations}

\paragraph{Data and code availability.}
Online Resource 1 (version 1.0.0) is available at
\url{https://github.com/chatchawanpan-dev/order24-group-determinants/releases/tag/v1.0.0}.
It contains the exact arithmetic programs, generated certificates,
coefficient constructors and reproducibility instructions.

\paragraph{Funding.}
The author received no funding for this work.

\paragraph{Competing interests.}
The author declares no competing interests.

\paragraph{Use of generative AI.}
OpenAI's Codex assistant was used to develop the exposition, assist
with mathematical arguments, write and review computational code,
and prepare the manuscript. Its suggestions were checked through
the stated proof reviews and exact arithmetic computations.


\appendix

\section{Finite residue data for the cyclic case}\label{cy24:residue-data}

\subsection{Component rings and homomorphisms}

Write $R_{d,p}$ for the $p$-primary factor of $\OO_d/J_d$.
In Table~\ref{cy24:rings}, $z$ is the primitive root in the displayed
ring. In particular, at three the map from $\OO_{3e}$ to
$\OO_e/3$ sends $\zeta_{3e}$ to $z=\xi_e$, as specified in the
integral gluing. The symbols $\OO_1$ and $\OO_2$ both mean $\Z$.
\begin{table}[htbp]
\centering
\caption{The full conductor residue rings. The last column is
$|(R_{d,2}\times R_{d,3})^\times|$.}\label{cy24:rings}
\begin{tabular}{cllr}
\toprule
$d$ & $R_{d,2}$ & $R_{d,3}$ & Unit order\\
\midrule
1 & $\Z/8$ & $\F_3$ & 8\\
2 & $\Z/8$ & $\F_3$ & 8\\
3 & $\OO_3/8$ & $\F_3$ & 96\\
4 & $\OO_4/4$ & $\OO_4/3$ & 64\\
6 & $\OO_6/8$ & $\F_3$ & 96\\
8 & $\OO_8/2$ & $\OO_8/3$ & 512\\
12 & $\OO_{12}/4$ & $\OO_4/3$ & 1536\\
24 & $\OO_{24}/2$ & $\OO_8/3$ & 12288\\
\bottomrule
\end{tabular}
\end{table}

The product $G=\prod_{d,p}R_{d,p}^\times$ has order $2^{52}3^4$.
For rational two-primary factors use generators $3,5$; for every
$\F_3$ factor use generator $2$. Their images in $H$ are zero.
All remaining generators and their images are given in
Table~\ref{cy24:map}. Entries in each row are ordered alike.
For compactness, $abc$ denotes the triple $(a,b,c)$, with the last
coordinate read modulo four. The component map $\psi_d$ is the sum
of its two local maps after reducing an element of $\OO_d$.

\begin{table}[htbp]
\centering\small
\caption{Complete local generators and their images in $H$.
The presentation types are defined in \eqref{cy24:presentations}.}
\label{cy24:map}
\begin{tabular}{cccll}
\toprule
$d$ & $p$ & Type & Ordered generators & Ordered images\\
\midrule
3&2&A&$1+3z,\ 1+7z,\ 2+z$&$112,\ 012,\ 012$\\
4&2&B&$z,\ 2+z$&$002,\ 002$\\
4&3&C&$1+z$&$103$\\
6&2&A&$1+z,\ 1+5z,\ 2+3z$&$010,\ 110,\ 010$\\
8&2&B&$z^3,\ 1+z^2+z^3$&$002,\ 002$\\
8&3&D&$z^3,\ z^2+z^3$&$002,\ 111$\\
12&2&E&$z^2+z^3,\ z^2+3z^3,\ z$&$103,\ 101,\ 002$\\
&&&$z+z^3,\ z+2z^2$&$000,\ 100$\\
12&3&C&$1+z$&$101$\\
24&2&F&$z^7,\ z^6+z^7$&$002,\ 111$\\
&&&$z^4+z^5+z^7,\ z^3+z^6+z^7$&$013,\ 111$\\
24&3&D&$z^3,\ z^2+z^3$&$002,\ 113$\\
\bottomrule
\end{tabular}
\end{table}

Here are complete multiplicative presentations. All generators commute;
$g_1,g_2,\ldots$ occur in their table order.
\begin{equation}\label{cy24:presentations}
\begin{array}{c|l}
A&g_1^{12}=1,\quad g_2^2=g_1^2,\quad g_3^2=g_1^{10}\\[2pt]
B&g_1^4=1,\quad g_2^2=g_1^2\\[2pt]
C&g_1^8=1\\[2pt]
D&g_1^8=g_2^8=1\\[2pt]
E&g_1^{12}=1,\quad g_2^2=g_1^2,\quad
  g_3^2=g_1g_2,\quad g_4^2=g_1^9g_2,\quad g_5^2=g_1g_2\\[2pt]
F&g_1^{12}=1,\quad g_2^4=g_1^8,\quad
  g_3^2=g_1^6g_2^2,\quad g_4^2=g_1^{10}g_2^2.
\end{array}
\end{equation}
The rational two-primary factors have $g_1^2=g_2^2=1$, and the
$\F_3$ factors have $g_1^2=1$. Polynomial multiplication verifies
the relations. Reducing exponents from last to first gives normal
forms in the indicated ranges, for example $12\cdot2^4=192$ forms
for type $E$. Their polynomial residues are distinct and exhaust the
unit counts. The displayed images satisfy the relations, so each map
is well defined and is evaluated by adding the images of a residue's
unique normal form.

\subsection{The quotient kernel}

Let $E\subset G$ be the image of $\prod_d\OO_d^\times$; the full
unit generators are those of Lemma~\ref{lem:fields}. We give explicit
generators for the second subgroup $L$, the image of $(A/J)^\times$.
These are units of a finite integral order; their polynomial lifts
need not have determinant $\pm1$ over $\Z$.

At two put $x=X^{16}$, $y=X^9$ and $\delta=y-1$. In the product
of two-primary rings, $8=0$ and $\delta^4=0$. For
\begin{equation}\label{cy24:theta}
 \theta\in\{1+x+x^2,\ 2-x-x^2,\ x(2-x-x^2)\},
 \quad 0\le r<3,\quad0\le s<4,\quad r+s>0,
\end{equation}
take the unit $1+\theta2^r\delta^s$, together with $x$.
Identity images may be omitted. The resulting subgroup is denoted $L_2$.
Every one of these polynomials is the identity at the three-primary
factors, since $x=1$ and every $\theta=0$ there.

For the two-primary order image $A_2$, the ideal $I=(2,\delta)$
satisfies $I^6=0$ and $A_2/I=\F_2\times\F_4$, whose units are
generated by $x$. Since three is invertible modulo eight, the three
$\theta$ span the scalar and quadratic summands of
$\Z/8[x]/(x^3-1)$; hence $\theta2^r\delta^s$ span the successive
$I$-layers. Formula~\eqref{eq:unit-filtration} proves $L_2=A_2^\times$.
The maximal ring has order $2^{42}$ and the order index is $2^{21}$,
so $|A_2|=2^{21}$ and $|L_2|=3\cdot2^{18}$.

At three the order image is $\F_3[y]/(y^8-1)$. Use its factorization
\begin{equation}\label{cy24:three-factors}
 y^8-1=(y-1)(y+1)(y^2+1)(y^2+y-1)(y^2-y-1)
       \quad\text{in }\F_3[y].
\end{equation}
In these factors choose the primitive units $2,2,1+y,y,y$,
respectively. For each factor, polynomial CRT gives a unique polynomial
$H_j$ of degree below eight which is this unit in that factor and one
in the other four. Lift its coefficients to $\{0,1,2\}$ and use
\begin{equation}\label{cy24:three-lifts}
 1+16\bigl(H_j(X^9)-1\bigr),\qquad 1\le j\le5.
\end{equation}
These polynomials are the identity at two and generate the full
three-primary order-unit subgroup $L_3$, of order $2^2\cdot8^3=2^{11}$.
Consequently $L=L_2L_3$ and $|L|=3\cdot2^{29}$.

\begin{lemma}\label{cy24:finite-kernel}
The sum $h$ of the component maps in Table~\ref{cy24:map} is
surjective and has kernel $EL$.
\end{lemma}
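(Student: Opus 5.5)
Three things must be checked: that $h$ is a well-defined homomorphism on $G$, that it kills $E$ and $L$, and that its kernel is no larger than $EL$. I would dispose of well-definedness using the presentations \eqref{cy24:presentations}. Each local factor $R_{d,p}^\times$ is presented by the listed commuting generators and relations. For every row of Table~\ref{cy24:map}, substitute the displayed images into each relation and confirm that it holds in $H=\C{2}^2\times\C{4}$. For example, in type A we need $12\,h(g_1)=0$, $2h(g_2)=2h(g_1)$ and $2h(g_3)=10\,h(g_1)$; these are immediate from the listed triples. The rational two-primary factors and the $\F_3$ factors map to zero, so they impose nothing. Hence $h$ is a homomorphism on $G=\prod R_{d,p}^\times$.

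Surjectivity I would read off directly from the table. The images $100$ (from $d=12$, $p=2$), $010$ (from $d=6$, $p=2$) and $001$ (from $d=12$, $p=2$: $103-002-100=001$, or via $101$ at $d=12$, $p=3$) generate $H$.

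Next I would show $EL\subseteq\ker h$. For $E$, reduce each global unit generator of Table~\ref{tab:field-units} into $R_{d,2}\times R_{d,3}$, write it in the normal form of the relevant presentation, and sum the images; each sum must vanish. Since $z^{d}$-torsion is included, this is a finite list of about a dozen reductions. For $L$, do the same for the generators of $L_2$ in \eqref{cy24:theta} together with $x$, and for the five lifts \eqref{cy24:three-lifts}. Each such generator is a polynomial in $X$. It must be evaluated in all eight components, reduced into the local rings, and its image summed across components. This step is routine but long, and it is where transcription errors would surface.

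The main obstacle is the reverse inclusion, and I would settle it by counting. We have $|G|=2^{52}3^4$ and $|H|=16$, so given surjectivity and $EL\subseteq\ker h$ it suffices to show $|EL|=|G|/16=2^{48}3^4$. We know $|L|=3\cdot2^{29}$ from the filtration argument \eqref{eq:unit-filtration} preceding the lemma. Since $|EL|=|E||L|/|E\cap L|$, I would compute $|E|$ as the order of the subgroup of $G$ generated by the reduced global unit generators, via a Smith normal form of their exponent vectors in the chosen presentation coordinates. I would then compute $|E\cap L|$, or more simply compute the subgroup generated by $E$ and $L$ together by a single Smith normal form on all generators and read off the index $16$. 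The $3$-part is easy, since $\mu$-torsion and $L_2$ (containing $x$ of order $3$) supply the factor $3^4$. The real work is the $2$-primary rank computation, which I would do exactly as the paper's certificates do. A conceptual backup is to compare with the index formula $[M:A]=2^{21}3^8$ and the known unit indices, but I would rely on the explicit normal-form computation.
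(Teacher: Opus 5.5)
Your proposal is correct and follows essentially the paper's route: check $EL\subseteq\ker h$ by substituting the field units and the generators \eqref{cy24:theta}--\eqref{cy24:three-lifts}, read surjectivity off the table images, and obtain $[G:EL]=16$ from a single Smith normal form, then conclude by counting. In that matrix, remember to include the relation columns from \eqref{cy24:presentations} alongside the $E$- and $L$-generator columns; omitting the rational signs $\pm1$ at $d=1,2$ is harmless, since the images of $X$ and $-1$ in $L$ already supply them modulo the nonrational roots of unity.
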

\begin{proof}
Substitution of the full field units and the polynomials
\eqref{cy24:theta}--\eqref{cy24:three-lifts} in the local normal
forms gives zero under $h$. Surjectivity follows, for example, from
the images $100$, $010$ and $103$ in the table.

For the index calculation, order the local factors by increasing $d$, with two preceding three,
and use the generator order in Table~\ref{cy24:map}, inserting the
rational and $\F_3$ generators specified above. There are $33$
ambient generators. Form an integer matrix with the following columns:
the exponent vectors of the relations \eqref{cy24:presentations};
the normal-form vectors of the thirteen full field-unit generators;
the normal-form vectors of $x$ and all $33$ units in
\eqref{cy24:theta}; and the five normal-form vectors in
\eqref{cy24:three-lifts}. The matrix is thus completely specified by
polynomial multiplication in the displayed rings, including every
column whose image is the identity. Integer elimination gives its
nonzero Smith diagonal
\[
 \underbrace{1,\ldots,1}_{30},\ 2,\ 2,\ 4.
\]
Equivalently, the gcd of its maximal minors is $16$.
One can reproduce this finite calculation by reducing each column to
the stated normal-form ranges; no algebraic norm search is involved.
It proves $[G:EL]=16$. The already proved inclusion $EL\subseteq\ker h$
and surjectivity to the sixteen-element group give equality.
\end{proof}

This calculation also explains why one map suffices for both primes.
For the odd three-critical cofactor ring, delete the three-primary
factors in components $1$ and $3$. The associated order-unit image
is the projection of $L$: the $y-1$ generator in
\eqref{cy24:three-lifts} disappears. The kernel of this projection
is $(\F_3^\times)^2$, and both its factors have zero image under
$h$. By Lemma~\ref{cy24:finite-kernel} it is already contained in $EL$.
The induced obstruction quotient is therefore again $H$, with the
same component maps. This identifies the two tests canonically through
the projection of their conductor rings.

\subsection{Compatible representatives and support tables}

The following polynomial supplies the odd critical representative:
\begin{equation}\label{cy24:odd-anchor}
\begin{split}
 W(X)={}&2+X^2+2X^3+X^4+X^9-X^{10}-2X^{11}\\
        &-X^{13}-X^{14}-X^{15}-X^{16}+X^{18}-X^{20}
          +X^{21}+X^{22}+X^{23}.
\end{split}
\end{equation}
Its rational values are $3$ and $1$, and
$W(\zeta_3)=(1-\zeta_3)(5+3\zeta_3)$. All its other field
components have norm one. These identities follow by reduction modulo
the eight cyclotomic polynomials. Thus $D_{24}(W)=171$, and division
by $S_3$ gives a compatible cofactor class
$\psi_3(5+3\zeta_3)=012=c$.

For each two-profile $r$ in Lemma~\ref{cy24:support}, form the following
finite set of representatives. Choose a tuple $u$ from
\eqref{cy24:1024}. At two, put these values in components $1,2,4,8$
and put one in components $3,6,12,24$. At three put $S_r^{-1}$ in
every component. Componentwise CRT gives a unit $\beta\in G$.
Keep it precisely when
\begin{equation}\label{cy24:profile-test}
 T_8(2^{r_1}u_1,2^{r_2}u_2,
            (1-i)^{r_4}u_4,(1-\zeta_8)^{r_8}u_8)
                  \equiv0\pmod8.
\end{equation}
All operations are independent of the chosen lifts: multiplication by
$S_r$ takes a difference in $J$ back into $J$. The values $h(\beta)$
are precisely the profile image. This prescription provides compatible
representatives directly, without a list of machine-chosen anchors.

\begin{table}[htbp]
\centering
\caption{All nonempty two-profile images at valuations five through seven.
Every unlisted positive profile of the indicated total, with
$r_1+r_2+r_4\ge4$, has empty image.}
\label{cy24:profiles}
\begin{tabular}{ccrl}
\toprule
$k$ & $(r_1,r_2,r_4,r_8)$ & Kept tuples in \eqref{cy24:1024} & Classes\\
\midrule
5&$(1,2,1,1)$&256&$\{010\}$\\
5&$(2,1,1,1)$&256&$\{012\}$\\
6&$(1,3,1,1)$&256&$\{010\}$\\
6&$(3,1,1,1)$&256&$\{012\}$\\
7&$(1,1,2,3)$&512&$\{111,113\}$\\
7&$(1,1,3,2)$&512&$\{101,103\}$\\
7&$(1,4,1,1)$&256&$\{010\}$\\
7&$(4,1,1,1)$&256&$\{012\}$\\
\bottomrule
\end{tabular}
\end{table}

Table~\ref{cy24:profiles} is obtained by the finite test
\eqref{cy24:profile-test}, over the $3$, $9$ and $19$ possible
profiles. To reproduce the calculation, range $u_1,u_2$ over
$1,3,5,7$, and range $u_4,u_8$ over the eight type-$B$ normal forms
in Table~\ref{cy24:map}. Evaluate the eight expressions in
\eqref{cy24:T8}, keep exactly those divisible by eight, and compute
their classes by CRT and Table~\ref{cy24:map}. This exhausts $1024$
tuples per profile; the local normalization in the proof of
Proposition~\ref{cy24:cofactor} is what makes this finite list cover
arbitrary integral cofactors. Taking unions gives $B_5=B_6=B$ and
$B_7=B'$.

The seed for the odd two-exponent tail is
\begin{equation}\label{cy24:even-anchor}
 F_9(X)=2-X+X^2-X^3+X^4+X^9+X^{15}
                         +X^{20}-X^{21}+X^{22}-X^{23}.
\end{equation}
Its component norm vector in order $\mathcal D$ is
$(4,8,1,4,1,4,1,1)$, with $F_9(-1)=8$. Hence its determinant is
$512$, as used in \eqref{cy24:even-tail}. 


\section{Finite residue data for \texorpdfstring{$\C{2}\times\C{12}$}{C2 x C12}}
\label{c2c12:data}

We give exact finite definitions and generation rules for the local
class sets in Section~\ref{c2c12:section}, independently of numerical
encodings of $H$. Section~\ref{sec:computation} and the supplementary
computation index give the regeneration commands and certificates.

\subsection{Inverse evaluation and conductor units}
\label{c2c12:inverse-data}

For each fixed sign of $A$, write the tensor components as
\[
 a,b\in\Z,\quad u+iv\in\OO_4,\quad
 \beta,\delta\in\Z[\omega],\quad r+is\in\Z[\omega,i].
\]
The $d=6$ component $c+dt_6$ gives $\delta=c-d\omega$.
For the quartic component $c_0+c_1t+c_2t^2+c_3t^3$, the conventions
in~\eqref{c2c12:tensor} give
\[
 r=(c_0+c_2)+c_2\omega,\qquad s=-c_3+c_1\omega.
\]
Apply the $\C{2}\times\C{4}$ inverse below separately over $\Q$
and $\Q(\omega)$:
\begin{equation}
 \mathcal I_d=\frac18(n_++n_-,\ n_+-n_-),
 \label{c2c12:full-numerators}
\end{equation}
where $n_\pm$ are given by~\eqref{c2c12:local-inverse} and the
coefficient order is $(1,x,x^2,x^3,u,ux,ux^2,ux^3)$.
Write the outputs as $f_1$ and $f_2=f_{20}+f_{21}\omega$.
The common inverse~\eqref{eq:c3-inverse} gives
\begin{equation}
 q=\frac{f_1-f_{20}-f_{21}}3,\qquad
 F=(f_{20}+q)+(f_{21}+q)z+qz^2.
 \label{c2c12:full-inverse}
\end{equation}
Substitute $u=A$, $x=B^9$, $z=B^4$ to recover the original $24$
coefficients. Their integrality is the exact membership test for
$\mathcal A$.

The conductor and index formula~\eqref{eq:abelian-conductor} gives
Table~\ref{c2c12:conductor-table} and
\[
 |\operatorname{disc}(\mathcal M)|=2^{12}3^8,
 \qquad [\mathcal M:\mathcal A]=2^{30}3^8.
\]

Here are finite generators for the group $L$ in
\eqref{c2c12:quotient}.  At $2$, evaluate $u,x,z$ from
\eqref{c2c12:tensor} in the component residue rings.  Put
\[
 \Theta=\{1+z+z^2,\ 2-z-z^2,\ z(2-z-z^2)\}.
\]
The two-primary subgroup $L_2$ is generated by $z$ and
\begin{equation}
 \begin{gathered}
 1+\theta\,2^c(u-1)^a(x-1)^b,\\
 \theta\in\Theta,\quad 0\le c,a<3,\quad0\le b<4,\quad c+a+b>0.
 \end{gathered}
 \label{c2c12:L2-generators}
\end{equation}
Give these generators residue one at $3$, omitting identities.
For $I=(2,u-1,x-1)$, direct evaluation gives $I^8=0$ and quotient
$\F_2\times\F_4$, whose units are generated by $z$.
The elements of $\Theta$ span the cubic coefficient summands over
$\Z/8\Z$; their displayed monomials therefore span every $I$-layer.
The lifting argument~\eqref{eq:unit-filtration} proves completeness.
The integral image has $2^{30}$ elements, with three of eight
residue classes invertible, so $|L_2|=3\cdot2^{27}$.

At $3$, the integral image is the paired diagonal algebra
$\F_3^4\times\F_9^2$ in the pairs~\eqref{c2c12:three-pairs}.
For each of the four scalar pairs use the diagonal generator $2$;
for each of the two quadratic pairs use the diagonal generator $1+i$.
Give these six generators residue one at $2$.  They generate $L_3$,
of order $2^{10}$, and $L=L_2L_3$.

The generators of $E_+$ are equally explicit.  Pair the sign in the
distinguished rational component with each of the other three rational
signs.  In each Eisenstein component use a primitive sixth root; in
each Gaussian component use $i$; and in each quartic component use
$t$ and $1+t$. These are full by Lemma~\ref{lem:fields}, since
$(1+t)(t-1)=\omega$. The $13$ generators have residue image of
order $2^{21}3^6$.

For a canonical finite presentation, enumerate each residue-unit group
lexicographically in its power basis. Adjoin the first unit outside the
current subgroup, with the relation given by its least positive power
in that subgroup, until all units occur. Take the direct product of
these presentations and impose the displayed global and local unit
words. The rings have at most $256$ elements and their unit groups
at most $192$. Integer reduction gives nonunit diagonal entries
\begin{equation}
 2,2,2,2,2,4,4.
 \label{c2c12:invariant-factors}
\end{equation}
This proves Lemma~\ref{c2c12:quotient-lemma}; its relation words supply
the global correction in Lemma~\ref{lem:unit-correction}.

\subsection{Complete local fibres and the minima}
\label{c2c12:two-data}

At $2$, the decomposition by $z$ gives two copies of
$\C{2}\times\C{4}$, over $R_1=\Z_2$ and
$R_2=\Z_2[\omega]$, respectively.  In either copy order the components as
\begin{equation}
 (a_+,b_+,a_-,b_-,g_+,g_-),\qquad
 g_\pm=r_\pm+it_\pm.
 \label{c2c12:local-order}
\end{equation}
Put $\pi=1-i$ and $c=(3,3,3,3,4,4)$.  The conductor quotient of this
block is
\[
 W_d=(R_d/8R_d)^4\times(R_d[i]/4R_d[i])^2.
\]
Its integral image $\Lambda_d$ consists exactly of the residues for
which all coordinates of $n_++n_-$ and $n_+-n_-$ are divisible by $8$,
where
\begin{equation}
 n_\pm=(a_\pm+b_\pm+2r_\pm,\ a_\pm-b_\pm+2t_\pm,
          a_\pm+b_\pm-2r_\pm,\ a_\pm-b_\pm-2t_\pm).
 \label{c2c12:local-inverse}
\end{equation}
This condition is independent of the chosen lifts.

For $p\in\Z_{\ge0}^6$, put
$b_p=(2^{p_1},\ldots,2^{p_4},\pi^{p_5},\pi^{p_6})$.
Let $B_{d,p}$ be the full component tuple with $b_p$ in block $d$
and ones in the other block.  Its norm is $2^{d|p|}$.
Let $\mathcal A_{d,\bar p}$ be the set of classes
$h(\operatorname{CRT}(v,1))$ as $v\in W_d^\times$ ranges over
\begin{equation}
 b_pv\in\Lambda_d,
 \qquad \bar p_j=\min(p_j,c_j).
 \label{c2c12:fibres}
\end{equation}
Here and below, a block CRT tuple is the identity in the inactive block;
the two arguments specify its residues at $2$ and $3$.  The set in
\eqref{c2c12:fibres} depends only on $\bar p$.  Define
\begin{equation}
 \mathcal D_{d,p}=\mathcal A_{d,\bar p}
       +h\bigl(\operatorname{CRT}(1,B_{d,p}^{-1})\bigr).
 \label{c2c12:fixed-two-class}
\end{equation}
The inverse in this formula is taken only at $3$, where every entry of
$B_{d,p}$ is a unit.

The inverse congruences give the complete fibres. An inactive block
is an order unit because its augmentation is invertible modulo $2$.
Local order units normalize it, and every nonzero paired residue at $3$,
to one. This explains the opposite-prime shift
in~\eqref{c2c12:fixed-two-class}.

For a saturated coordinate the class sets have period two:
\begin{equation}
 \mathcal D_{d,p+2e_j}=\mathcal D_{d,p}\qquad(p_j\ge c_j).
 \label{c2c12:period-two}
\end{equation}
Indeed, the raw coordinate remains zero modulo its conductor at $2$.
For $j\le4$, its multiplier $4$ is one at $3$, so the cofactor is
unchanged. For $j=5,6$, multiply that cofactor component by the global norm-one
unit $i^{-1}$. At $3$ the corrected multiplier is
$\pi^2i^{-1}=-2=1$, and the cofactor class is unchanged.
The inverse operations prove equality. Thus this is a period of
classes modulo global units, not a period of the raw residue.

Put
\[
 \mathcal V_2=\prod_{j=1}^6\{0,\ldots,c_j+1\},
 \qquad \eta(p)=\begin{cases}1,&p_j\ge c_j\text{ for some }j,\\
                             0,&\text{otherwise}.
             \end{cases}
\]
For $n\ge0$, define
\begin{equation}
 \mathcal B_d(n)=
 \bigcup_{\substack{p\in\mathcal V_2\\
    n-|p|\in2\eta(p)\Z_{\ge0}}}
 \mathcal D_{d,p}.
 \label{c2c12:block-sets}
\end{equation}
Reduce each saturated coordinate to $c_j$ or $c_j+1$ using
\eqref{c2c12:period-two}; conversely, it absorbs every even excess.
When $\eta(p)=0$, no padding is allowed and $n=|p|$.
Write $R_n=\mathcal B_1(n)$ and $Q_n=\mathcal B_2(n)$.
An empty union is empty.  The quadratic index $n$ contributes $2n$ to
the valuation of the rational determinant.

The local inverse~\eqref{c2c12:local-inverse} has denominator $8$ and
evaluation determinant of absolute value $2^{10}$.  Its conductor is
$(8R_d)^4\times(4R_d[i])^2$.  The integral image therefore has
$2^{10d}$ residues.  A complete set for $d=1$ is obtained by evaluating
\begin{equation}
 \sum_{j=0}^7 a_j e_j,\qquad
 (e_0,\ldots,e_7)=(1,x,x^2,x^3,u,ux,ux^2,ux^3),
 \label{c2c12:residue-box}
\end{equation}
with
\[
 0\le a_0<8,\quad0\le a_1,a_4<4,\quad
 0\le a_2,a_3,a_5<2,\quad a_6=a_7=0.
\]
If the difference of two such evaluations is in the conductor, the
scalar and Gaussian congruences successively force the differences
$a_5,a_4,a_3,a_2,a_1,a_0$ to vanish.  The $1,024$ evaluations are
therefore distinct and exhaust the image.  For $d=2$, take independent
representatives for the coefficients of $1$ and $\omega$; this gives
all $1,048,576$ residues.

For every resulting residue $r$, record its clipped valuations
$\bar p_j=\min(v_{\varpi_j}(r_j),c_j)$, using
$\varpi_j=2$ in the first four positions and $\varpi_j=\pi$ in the last
two.  Set the clipped valuation of zero equal to $c_j$.
To recover every cofactor residue, enumerate the units $v_j$ of the
individual component quotient satisfying
\begin{equation}
 \varpi_j^{\bar p_j}v_j=r_j.
 \label{c2c12:fibre-equation}
\end{equation}
If $v_{j,0}$ is one solution, all solutions are exactly
\[
 v_{j,0}K_{j,\bar p_j},\qquad
 K_{j,e}=\{w\in W_{d,j}^\times:
                \varpi_j^e(w-1)=0\}.
\]
Thus a single product of solutions gives a starting class, and the
images of the six groups $K_{j,e}$ give every remaining class in that
fibre.  Taking their union over the residue box computes
$\mathcal A_{d,\bar p}$ exactly. Each component unit enumeration has
size at most $192$.

The primitive total is at most $26$. For $h\in H$ and $s\in\Z/2\Z$,
retain
\begin{equation}
 m_d(h,s)=\min\{|p|:p\in\mathcal V_2,\ \eta(p)=1,
              \ |p|\equiv s\pmod2,\ h\in\mathcal D_{d,p}\}.
 \label{c2c12:block-minimum}
\end{equation}
Also retain each exact pair $(h,|p|)$ with $\eta(p)=0$.
An empty minimum is infinity. Table~\ref{c2c12:block-records} gives
the complete record counts.

\begin{table}[ht]
\centering
\caption{Complete local-two records.  An exact record is a pair of a
class and a total; a padded record is a finite minimum in
\eqref{c2c12:block-minimum}.}
\label{c2c12:block-records}
\begin{tabular}{c r r r r}
\toprule
Base degree & clipped profiles & exact records & padded records & largest minimum\\
\midrule
$1$ & $102$ & $45$ & $64$ & $13$\\
$2$ & $149$ & $889$ & $256$ & $10$\\
\bottomrule
\end{tabular}
\end{table}

To combine the blocks with scalar period $4$, use each rational
padded record at its minimum cost and that cost plus $2$.
Use each quadratic padded record at its minimum relative cost.
Combine all pairs, adding their classes and taking total cost $a+2r$.
Two exact records give a rigid record of cost at most $42$.
If at least one record is padded, retain the least cost for its class
and total residue modulo $4$, denoted $m(h,s)$.
The extra rational $2$ represents both residues modulo $4$;
all further increments $4$ can be absorbed in a saturated coordinate,
adding $4$ in the rational block or $2$ in the quadratic block.
Thus, with $E_k$ the rigid exact set,
\begin{equation}
 T_k=E_k\cup\{h\in H:m(h,k\bmod4)\le k\}.
 \label{c2c12:combined-minimum}
\end{equation}
The $198,085$ primitive pairs give all $2,048$ class-and-remainder
minima; their largest value is $26$.

\begin{table}[ht]
\centering
\caption{The initial two-primary class sets.  The full sets, defined
by~\eqref{c2c12:combined-minimum}, are used in the membership test.}
\label{c2c12:initial-two}
\begin{tabular}{r r@{\qquad}r r@{\qquad}r r}
\toprule
$k$ & $|T_k|$ & $k$ & $|T_k|$ & $k$ & $|T_k|$\\
\midrule
$0$ & $1$ & $12$ & $85$ & $18$ & $152$\\
$1$--$7$ & $0$ & $13$ & $32$ & $19$ & $32$\\
$8$ & $12$ & $14$ & $92$ & $20$ & $332$\\
$9$ & $8$ & $15$ & $32$ & $21$ & $272$\\
$10$ & $8$ & $16$ & $152$ & $22$ & $512$\\
$11$ & $24$ & $17$ & $32$ & &\\
\bottomrule
\end{tabular}
\end{table}

If $k\ge23$ and $m(h,k\bmod4)>k$, congruence forces
$m(h,k\bmod4)\ge k+4>26$, a contradiction.
Together with $T_{22}=H$, this proves the full tail.
Testing the two classes $0$ and
$h_{j_0}(-1)$ in the same finite sets gives, for $k\le21$, the positive
exponents $0,8,12,14,16,18,20,21$ and the negative exponents
$13,15,17,19,20,21$.  These are the finite checks used in
Corollary~\ref{cor:pure-two}.

\subsection{The three-primary recurrence and reconstruction}
\label{c2c12:three-data}

At $3$, pair the components in the order
\begin{equation}
 \begin{aligned}
 &((0,1),(0,3)),\quad ((0,2),(0,6)),\quad ((0,4),(0,12)),\\
 &((1,1),(1,3)),\quad ((1,2),(1,6)),\quad ((1,4),(1,12)).
 \end{aligned}
 \label{c2c12:three-pairs}
\end{equation}
Their residue degrees are $w=(1,1,2,1,1,2)$.  In a pair of degree $w_j$,
use the raw factors $(3^{x_j},\lambda^{y_j})$, with
$\lambda=1-\omega$.  Their norm contribution is
$3^{w_j(x_j+y_j)}$.  A pair is either empty, with $x_j=y_j=0$, or
occupied, with $x_j,y_j\ge1$.  For an occupied pair choose a residue
ratio $g_j^{z_j}$, with $g_j=2$ in $\F_3$ and $g_j=1+i$ in $\F_9$.
Let $U(x,y,z)\in T$ have residues
\begin{equation}
 \begin{array}{c|cc}
 &\text{at }2&\text{at }3\\ \hline
 \text{lower entry}&3^{-x_j}&1\\
 \text{upper entry}&\lambda^{-y_j}&g_j^{z_j}
 \end{array}
 \label{c2c12:three-anchor}
\end{equation}
at occupied pairs, and entries one at empty pairs.  The inverse factors
in this formula are taken only at $2$.

Retain just the following finite options at each occupied position:
\begin{equation}
 \begin{cases}
 1\le x_j\le4,\quad1\le y_j\le4,\quad0\le z_j<2,&w_j=1,\\
 1\le x_j\le2,\quad1\le y_j\le4,\quad0\le z_j<8,&w_j=2.
 \end{cases}
 \label{c2c12:three-options}
\end{equation}
For each nonempty retained word put $\beta(x,y)=\sum_jw_j(x_j+y_j)$,
and set $S_0=\{0\}$.  For $b>0$, define
\begin{equation}
 S_b=\left\{h(U(x,y,z)):
 \begin{array}{l}
 (x,y,z)\text{ is a nonempty retained word},\\[-2pt]
 \beta(x,y)\le b,\quad\beta(x,y)\equiv b\pmod4
 \end{array}\right\}.
 \label{c2c12:three-set}
\end{equation}

At $3$, the two residues in each pair must agree.  They are both zero
precisely at occupied pairs.  At such a pair the unit cofactors have an
arbitrary nonzero ratio, whereas a paired local unit normalizes the
lower residue to one.  At $2$, the inverse raw factors in
\eqref{c2c12:three-anchor} cancel the three-primary factors.
The lower inverse has period $2$.  The upper class has period $4$,
because $\lambda^4=9\omega^2$: the factor $9$ is one in both
two-primary conductor rings, and $\omega$ is a global norm-one unit
whose upper residue at $3$ is one.  Thus every support reduces to
\eqref{c2c12:three-options}, preserving its class and its cost modulo
$4$.  Every nonempty retained support absorbs a further cost $4$ by
increasing one lower exponent by $4/w_j$.  This proves
\eqref{c2c12:three-set}.  An occupied degree-one pair realizes every
$b\ge2$; no occupied pair has cost one.

Combine the options~\eqref{c2c12:three-options} by the following
minimum recurrence.  At each position
include the empty option, with class zero and cost zero, as well as
the $32$ or $64$ occupied options.  Retain a minimum for each triple
$(h,r,\epsilon)\in H\times\Z/4\Z\times\{0,1\}$, where $\epsilon$
records whether a pair has been occupied.  Start with value zero at
$(0,0,0)$ and infinity elsewhere, and initialize each new stage to
infinity.  If an option has class $a$, cost
$c$, and occupied flag $\delta$, the update is
\begin{equation}
 M_j(h+a,r+c,\max(\epsilon,\delta))
  =\min\bigl\{M_j(h+a,r+c,\max(\epsilon,\delta)),
              M_{j-1}(h,r,\epsilon)+c\bigr\},
 \label{c2c12:three-recurrence}
\end{equation}
with the second coordinate reduced modulo $4$.  This recurrence is
just the minimum over all words ending in the chosen option.
It is proved by induction on the six positions.  A primitive word has
cost at most $56$, and every nonempty word admits the padding just
proved.  Consequently, for $b>0$,
\[
 S_b=\{h:M_6(h,b\bmod4,1)\le b\}.
\]
The two-primary image splits into the degree-one and degree-two
blocks, giving the normalized classes $R_a+Q_r$; the paired analysis
above gives $S_b$. The common CRT identity~\eqref{eq:crt-assembly},
followed by the union over $a+2r=k$, proves~\eqref{c2c12:allowed}.
A retained word and the stated padding operations give the raw factors
and a residue anchor. Lemma~\ref{lem:unit-correction} corrects the
supplied cofactor tuple, and~\eqref{c2c12:full-inverse} recovers its
$24$ coefficients; the anchor does not replace the supplied norms.


\section{Finite data and anchors for \texorpdfstring{$\C{2}\times\C{2}\times\C{6}$}{C2 x C2 x C6}}
\label{c2c2c6:data}

Character positions remain ordered as in Section~\ref{c2c2c6:section}.
Section~\ref{sec:computation} indexes the complete regenerations.

\subsection{The conductor quotient}
\label{c2c2c6:quotient-data}

We give generators for the local order-unit subgroup $L$.
Put $\delta_1=u-1$, $\delta_2=v-1$, and $\delta_3=w-1$.
At two, the integral order image $A_2$ in the component rings modulo eight is
\begin{equation}\label{c2c2c6:local-ring-presentation}
 \frac{(\Z/8\Z)[z,\delta_1,\delta_2,\delta_3]}
 {\bigl(z^3-1,\ \delta_i^2+2\delta_i,\ 4\delta_i,
       \ 2\delta_i\delta_j\ (i<j),\ \delta_1\delta_2\delta_3\bigr)}.
\end{equation}
To check this presentation, evaluate each displayed relation at $u,v,w=\pm1$.
For each of $1,z,z^2$, a normal form has one coefficient modulo eight, three modulo four, and three modulo two.
The source therefore has order $2^{36}$.
The Walsh transform has determinant of absolute value $2^{12}$ in each of the three coefficient positions.
Hence its image modulo eight also has order $2^{36}$, proving that the presentation is exact.

The ideal $I=(2,\delta_1,\delta_2,\delta_3)$ satisfies $I^3=0$ and
$A_2/I\simeq\F_2\times\F_4$.
The classes of
\[
 2z^j,\ \delta_i z^j\quad(0\le j<3,\ 1\le i\le3)
\]
give a basis of $I/I^2$, and those of
\[
 \begin{gathered}
 4z^j\quad(0\le j<3),\qquad
 2\delta_i z^j\quad(0\le j<3,\ 1\le i\le3),\\
 \delta_i\delta_l z^j\quad(0\le j<3,\ 1\le i<l\le3)
 \end{gathered}
\]
give the remaining $21$ basis elements of $I^2$.
These layer bases and \eqref{eq:unit-filtration} show that $A_2^\times$
has order $3\cdot2^{33}$ and is generated by $z$ and the $33$ elements
$1+3b$ from the displayed monomials.
The listed polynomials have three-primary conductor residue one.

At three, the order image is the diagonal paired copy of $\F_3^8$.
Its units are generated by changing the sign in a single pair.
An explicit polynomial for this change is $1+8J_h$, where
\[
 J_h=\sum_gW_{hg}u^{g_0}v^{g_1}w^{g_2}.
\]
It is one modulo eight and is minus one precisely at the selected pair modulo three.
The two local choices combine independently by the Chinese remainder theorem.
Thus
\begin{equation}\label{c2c2c6:group-orders}
 |\Gamma|=8^8\,96^8=2^{64}3^8,\qquad
 |E^+|=2^7 6^8=2^{15}3^8,\qquad |L|=3\cdot2^{41}.
\end{equation}
An element of $E^+\cap L$ lifts to an integral tuple of roots of unity.
If its group-ring coefficients are $a_g$, Fourier orthogonality gives
\[
 \sum_g|a_g|^2=\frac1{24}\sum_\chi|\chi(F)|^2=1.
\]
Thus $F=\pm g$ for one group element.
Conversely, these $48$ elements belong to $E^+\cap L$: a translation has sign $(-1)^{24-24/d}=1$ for $d=1,2,3,6$, and coefficient negation also has determinant one.
It follows from \eqref{c2c2c6:group-orders} that
\[
 |H|=\frac{|\Gamma|\,48}{|E^+|\,|L|}=4096.
\]
The seven paired rational sign changes and the eight individual copies of $1+\omega$ generate $E^+$.
Together with the $42$ generators of $L$ above, they give a presentation that both forms the quotient and recovers a unit correction.
A concrete ambient presentation uses $3,5$ modulo eight and $2$ modulo
three at each rational position. At each Eisenstein position use
\[
 u_1=1+3\omega,\qquad u_2=1+7\omega,\qquad u_3=2+\omega
 \quad\text{modulo }8O,
\]
with relations $u_1^{12}=1$, $u_2^2=u_1^2$, $u_3^2=u_1^{10}$,
and use $2$ modulo $\lambda$.
The $12\cdot2\cdot2=48$ normal forms exhaust $(O/8O)^\times$.
There are $56$ ambient generators. Adjoining the $15$ generators of
$E^+$ and the $42$ generators of $L$ gives an integer relation matrix
with $56$ rows and $113$ columns; denote this matrix by $M$.
Its Smith diagonal is
\begin{equation}
 \underbrace{1,\ldots,1}_{45},\quad
 \underbrace{2,\ldots,2}_{10},\quad4,
 \qquad H\simeq\C{2}^{10}\times\C{4}.
 \label{c2c2c6:invariant-factors}
\end{equation}
The certificate \texttt{rebuilt/c2c2c6/invariants.json}, regenerated by
\texttt{check\_c2c2c6\_invariants.py} in the supplement, supplies
unimodular matrices $U,V$ with $UMV$ equal to the displayed Smith form.
Every added generator of $E^+$ or $L$ comes from an actual global
component unit or integral group-ring polynomial, so a relation word
supplies its own lift.

\paragraph{The small Eisenstein quotients.}
In $Q_8$, $[1-\omega]$ and $[1+4\omega]$ have orders $4$ and $2$
and norm residues $3$ and $5$ modulo eight, so they are independent.
The $48$ ambient units and six distinct global-unit residues give
$Q_8\simeq\C{4}\times\C{2}$.
By CRT, $Q_{16}\to Q_8$ has order-two kernel $\langle[17]\rangle$.
The lifts $17-\omega$ and $9+4\omega$ have residue one modulo $\lambda$;
their fourth and second powers in the CRT ring are $\omega^2$ and $1$.
They give a complement to the kernel, proving $Q_{16}\simeq\C{4}\times\C{2}^2$.

\subsection{Local valuations and profile sets}
\label{c2c2c6:profile-rules}

\begin{lemma}\label{c2c2c6:local-valuations}
Let $R$ be $\Z_2$ or the unramified quadratic ring $\Z_2[\omega]$, and let its residue field be $k$.
For eight nonzero values $g_h$ satisfying $Wg\in8R^8$, the possible totals $\sum_hv_2(g_h)$ are
\[
 \{0,8\}\cup\Z_{\ge12}\quad(k=\F_2),\qquad
 \{0,8\}\cup\Z_{\ge10}\quad(k=\F_4).
\]
At rational total eight, the product divided by $2^8$ is one modulo four.
\end{lemma}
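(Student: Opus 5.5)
The plan is to pass from the eight values to their group-ring coefficients and read the valuations off a digit expansion in Reed--Muller codes. Since $W^{-1}=W/8$, the hypothesis $Wg\in8R^8$ says exactly that $g=Wa$ for some $a\in R^8$. That is, $g$ is the Walsh transform of an element of $R[\C{2}^3]$.

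\textbf{Step 1: digit description of the image.} I would first describe $W(R^8)$ by digits. Over $\Z_2$ I claim that every element of $W(R^8)$ can be written as
\[
 g=c_0+2c_1+4c_2+8c_3,
\]
with Teichm\"uller-type digits $c_i$ whose reductions lie in $\mathrm{RM}(i,3)\otimes k$. Here $\mathrm{RM}(i,3)$ is the binary Reed--Muller code of functions on $\F_2^3$ of degree at most $i$. For $R=\Z_2[\omega]$ I expect the same statement, with digits taken in $\{0\}\cup\mu_3$ and codes tensored with $\F_4$. I would prove this by induction on the filtration by $I=(2,\delta_1,\delta_2,\delta_3)$ from \eqref{c2c2c6:local-ring-presentation}. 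The point is that a monomial of degree $i$ in the $\delta_j$ evaluates to $(-2)^i$ times the indicator of a codimension-$i$ subspace. Only the first two digits will actually be used; I would verify them directly.
\begin{itemize}
\item $g\bmod 2$ is constant, namely the augmentation $q$.
\item If $q\equiv0\pmod2$, then $(g/2)\bmod2$ is an affine function $\F_2^3\to k$.
\end{itemize}

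\textbf{Step 2: the lower bounds.} If the constant $q$ is a unit, the total is $0$. Otherwise every $v_2(g_h)\ge1$. Let $S$ be the support of the affine function $(g/2)\bmod2$; it is exactly the set of positions with $v_2(g_h)=1$, and every position outside $S$ has valuation at least $2$.
\begin{itemize}
\item Over $\F_2$, a nonzero affine function has support of size $4$ or $8$. The possible configurations are $|S|=8$, giving total exactly $8$; $|S|=4$, giving total at least $4+2\cdot4=12$; and $S=\varnothing$, giving total at least $16$.
\item Over $\F_4$, an affine map $\F_2^3\to\F_4$ with two-dimensional $\F_2$-image vanishes on a coset of codimension two. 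Its support therefore has size $6$, and the total is at least $6+2\cdot2=10$. The other supports, of sizes $4$ and $8$, behave as over $\F_2$.
\end{itemize}
This excludes the totals $1,\dots,7$ in both cases, together with $9,10,11$ over $\F_2$ and $9$ over $\F_4$.

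\textbf{Step 3: the congruence at total eight.} At total $8$ we have $g_h=2u_h$ with all $u_h$ units. The condition $Wg\in8R^8$ becomes $Wu\in4R^8$. Write $u_h\equiv\tau(1+2\varepsilon_h)\pmod 4$ with a common $\tau$; over $\Z_2$ take $\tau=1$, over $\Z_2[\omega]$ take $\tau\in\mu_3$. The row of $W$ indexed by $0$ then gives $\sum_h\varepsilon_h\equiv0\pmod2$. Hence
\[
 \prod_hu_h\equiv\tau^8\Bigl(1+2\sum_h\varepsilon_h\Bigr)\equiv\tau^8\pmod 4,
\]
which is $1$ in the rational case, as the lemma asserts.

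\textbf{Step 4: realizing the claimed totals.} It remains to show that every claimed total occurs. Totals $0$ and $8$ come from $g=(1,\dots,1)$ and $(-2,\dots,-2)$. Total $12$ comes from the template $(4,4,4,4,2,2,2,2)$ with appropriate signs, as in \eqref{c2c2c6:rational-templates}. Every total $\ge13$ comes from the family $(2^{j},-8,4,4,2,2,2,2)$: its Walsh sums stay divisible by eight because the first entry only changes by multiples of $8$. Over $\F_4$, I would construct totals $10$ and $11$ explicitly from an affine function with support of size six, for instance $g_h=2\chi(h)$ with $\chi$ the corresponding map to $\{0\}\cup\mu_3$ plus a correction by $4$. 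The tail $\ge12$ is handled by the same padding at one coordinate.

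\textbf{Main obstacle.} I expect the main difficulty to be making the second-digit claim rigorous over $\Z_2[\omega]$, where carries between digits interact with the Teichm\"uller choices. If the layer argument becomes messy, my first fallback is to verify only the mod-$4$ statement directly from the presentation \eqref{c2c2c6:local-ring-presentation}. That suffices, since only the affinity of $(g/2)\bmod 2$ is used.
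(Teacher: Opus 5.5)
Your necessity argument is correct and follows the paper's route: $g\bmod2$ is constant, and $(g/2)\bmod2$ is affine. The paper obtains the affinity from the self-duality of the span of $1,h_0,h_1,h_2$, which is your first-order Reed--Muller digit. Over $\F_2$ a nonzero affine map has support of size $4$ or $8$, and over $\F_4$ size $6$ is also possible; the row of $W$ indexed by zero then gives the congruence modulo four at rational total eight. In Step~3, a common $\tau$ over $\Z_2[\omega]$ need not exist: a zero-free affine map to $\F_4$, such as $\omega+h_0$, need not be constant. This is harmless, because the lemma asserts the congruence only in the rational case.

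The gap is in Step~4, at rational total $13$. Your family $(2^j,-8,4,4,2,2,2,2)$ has total $j+11$, so total $13$ requires $j=2$. Then the row-zero Walsh sum is $4-8+4+4+2+2+2+2=12\not\equiv0\pmod8$. Your justification, that the first entry changes only by multiples of $8$, links only the members with $j\ge3$, and these give totals $\ge14$. In terms of $b=g/2$: the half-residue is the indicator of the plane $h_0=1$, and off that plane $b$ has valuations $(1,2,1,1)$ and sum $\equiv2\pmod4$. The four odd entries on the plane must therefore also sum to $2$ modulo $4$, which the all-ones choice violates. Total $13$ lies in the claimed $\F_2$ list, and the paper later relies on it through $R_{13}$ and the row $k=13$, so it must be realized.

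A sign change repairs it. For example, $(4,-8,4,4,2,2,2,-2)$ has all eight Walsh sums divisible by $8$. Alternatively, the paper's family $(-2^{a-10},4,4,4,-2,2,2,2)$ covers every $a\ge13$ at once, since its first entry is already $\equiv0\pmod8$ at $a=13$.

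The paper avoids such case-by-case checks by also proving the converse of your Step~1: $Wg\in8R^8$ holds exactly when $g/2$ has affine residue and $\sum_h g_h/2\equiv0\pmod{4R}$. Each template is then verified by a single parity count. Your sketch for the quadratic totals $10$ and $11$ works once the correction by $4$ is placed at a position where $g_h$ has valuation one.
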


\begin{proof}
All values have the same residue modulo two.
Thus either all are units or all are even.
For $g_h=2b_h$, the exact condition is
\begin{equation}\label{c2c2c6:affine-test}
 Wg\in8R^8
 \quad\Longleftrightarrow\quad
 \overline b\colon\F_2^3\longrightarrow k\text{ is affine},
 \qquad\sum_hb_h\equiv0\pmod{4R}.
\end{equation}
Subtracting each Walsh row from the row indexed by zero shows that the residue vector is orthogonal to $1,h_0,h_1,h_2$.
These four functions span their own orthogonal complement: their pairwise products have even sums, and their span has dimension four.
This proves \eqref{c2c2c6:affine-test}.

For a positive valuation vector $e$, the zero set of this affine map is $Z_e=\{h:e_h\ge2\}$.
A proper zero set is empty or an affine plane over $\F_2$; over $\F_4$ it may also be an affine line.
Conversely, choose an affine map with the prescribed zero set and lift its nonzero values to the positions with $e_h=1$.
At every other position take $b_h=2^{e_h-1}$.
The sum of these lifts is even; subtract it from one unit position.
This preserves its valuation and gives the required sum modulo four.
If $Z_e$ is all eight positions, the sole condition is that the nonzero residues at $e_h=2$ sum to zero.
Over $\F_2$ their number must be even; over $\F_4$ every number except one is possible, using equal pairs or $1,\omega,\omega^2$ followed by equal pairs.
These arguments prove every profile condition, including arbitrary exponents.

The smallest positive total is eight.
A nonempty proper zero set has at least four positions over $\F_2$ and two over $\F_4$, giving the next lower bounds twelve and ten.
All larger rational totals are supplied by
\[
 (4,4,4,4,2,2,2,2),\qquad
 (-2^{a-10},4,4,4,-2,2,2,2)\quad(a\ge13).
\]
The quadratic totals ten and $r\ge11$ are supplied by
\[
 (4,4,2,2,2\omega,2\omega,2\omega^2,2\omega^2),\qquad
 (-2^{r-8},4,-2,2,2\omega,2\omega,2\omega^2,2\omega^2).
\]
Their integrality follows from \eqref{c2c2c6:affine-test}.
Finally, at rational total eight, write $b_h=1+2d_h$.
The sum condition is $\sum_hd_h=0$ modulo two, which is equivalent to $\prod_hb_h=1$ modulo four.
\end{proof}

\paragraph{Rational classes as affine cosets.}
Put
\[
 \alpha=\theta_{0,1}(13),\quad \delta=\theta_{0,1}(11),\quad
 c_h=\theta_{h,3}(17),\quad c=c_0.
\]
The quotient presentation gives six independent order-two columns
$\alpha,\delta,c,L(e_0),L(e_1),L(e_2)$, where
\begin{equation}\label{c2c2c6:rational-component-relations}
 \begin{gathered}
 \theta_{h,1}(13)=\alpha,\quad \theta_{h,1}(11)=\delta,\quad
 \theta_{h,1}(17)=c_h,\\
 \theta_{h,1}(19)=c_h+\delta,\qquad c_h=c+L(h),
 \end{gathered}
\end{equation}
and $L$ is linear.
For the first two identities, use the integral tuples with $13$, or $-11$,
in two rational positions and one elsewhere.
The paired $17$ tuple is integral, and $17\cdot11\equiv19\pmod{24}$.
The rational tuple equal to $-17$ on an affine plane $A$ and one
elsewhere has Walsh increment $-18W1_A\in8\Z^8$ and three-residues one.
Removing its four signs proves $\sum_{h\in A}c_h=0$ and hence affinity.
Set
\[
 V=L(\F_2^3),\quad U=\langle\alpha,V\rangle,\quad
 K=\langle\alpha,c+\delta,V\rangle.
\]
Thus $|V|=8$, $|U|=16$, $|K|=32$, and $c\notin K$.

\begin{proposition}\label{c2c2c6:rational-cosets}
The complete rational class sets are
\begin{equation}\label{c2c2c6:rational-coset-formula}
 R_a=
 \begin{cases}
 \{0\},&a=0,\\
 U,&a=8,\\
 (c+K)\setminus(\delta+\langle\alpha\rangle),&a=13,\\
 K,&a\ge12\text{ even},\\
 c+K,&a\ge15\text{ odd},\\
 \varnothing,&\text{otherwise}.
 \end{cases}
\end{equation}
\end{proposition}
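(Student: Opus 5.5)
Since $R_a$ is defined by \eqref{c2c2c6:R-definition}, the plan is to reduce it to rational profile data and then compute classes with the relations \eqref{c2c2c6:rational-component-relations}. Fix a positive profile $e$ of total $a$. The Eisenstein entries are $\gamma_h=17^{e_h\bmod2}$, contributing $\sum_h (e_h\bmod2)c_h$. The rational entries are $u_h\in\{1,7,13,19\}$, the units that are one modulo three. Integrality is the Walsh condition on $2^{e_h}u_h$, which Lemma~\ref{c2c2c6:local-valuations} and \eqref{c2c2c6:affine-test} describe explicitly: either all $e_h=1$ with $\sum u_h\equiv0\pmod 8$ (this is the total $8$ case), or $Z_e=\{h:e_h\ge2\}$ is an affine plane or all of $\F_2^3$, with an affine residue condition and a sum condition modulo four. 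By Lemma~\ref{c2c2c6:local-valuations}, the empty set is forced for $a\notin\{0,8\}\cup\Z_{\ge12}$, and $R_0=\{0\}$ by definition.

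Next I would compute classes. Write $u_h=7^{\epsilon_h}13^{\phi_h}$ modulo $24$; note $7\equiv13\cdot19$. Then \eqref{c2c2c6:rational-component-relations} gives $\theta_{h,1}(7)=\alpha+c_h+\delta$. So the class of a tuple is
\[
\Bigl(\sum_h\phi_h+\sum_h\epsilon_h\Bigr)\alpha+\sum_h\epsilon_h(c+\delta)+\sum_h\epsilon_hL(h)+\sum_h(e_h\bmod 2)\bigl(c+L(h)\bigr).
\]

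For $a=8$ the $c_h$ terms vanish. The condition $u_h\equiv1\pmod 4$ on the product (from Lemma~\ref{c2c2c6:local-valuations}) means an even number of $u_h\in\{7,19\}$. With $\sum_h\epsilon_h$ even, the $c+\delta$ term drops and $\sum\epsilon_h L(h)$ ranges over $V$. Adding the free choice of $\alpha$ gives $U$. This case needs care, since the exact condition is $\sum u_h\equiv0\pmod8$ rather than just modulo four. I would check that $\phi$ can still be adjusted to realize both $\alpha$-parities, for instance by swapping $1\leftrightarrow 13$ at two positions, which preserves the sum modulo eight.

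For $a\ge12$ the profiles have $Z_e$ a plane $P$ or everything. The parity term $\sum(e_h\bmod2)c_h$ lies in $c\cdot(\text{parity of }a)+V$. The affine condition on residues of $u_h$ modulo four (whether $u_h\equiv 3$) imposes that $\epsilon$ restricted to the unit positions is the indicator of an affine subset. So $\sum\epsilon_h$ can be odd, which produces $c+\delta$. This yields the full $K$ for $a$ even and $c+K$ for $a$ odd, once $a$ is large enough for the exponents $e_h\ge2$ on $P$ to be chosen freely. I would show the full range by exhibiting profiles, as in the lemma's explicit tuples, together with pairs differing by a single generator of $K$.

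The main obstacle is $a=13$. Here the only profiles are a plane $P$ of exponents $2,2,2,3$ (up to placement) with the complementary plane all ones, or equivalently rigid small cases. The parity term is a single $c_{h_0}$, and the unit positions form a plane. I would enumerate which $(\epsilon,\phi)$ satisfy the modulo-four sum condition and show that exactly the classes $\delta+\langle\alpha\rangle$ shifted by $c$ are missed. The point is that achieving the $c+\delta$ parity together with a zero $V$-part forces an inconsistency in $\sum u_h$ modulo four. This finite check, $8\cdot 14$ plane choices times $4^8$ residue choices, I would confirm by direct enumeration against the presentation of $H$.
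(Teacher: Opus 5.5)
Your class bookkeeping matches the paper's. In your coordinates a profile tuple has class $Ec+X\alpha+Y(c+\delta)+L(Z)$, where $E=a\bmod 2$, $X=\sum_h(\phi_h+\epsilon_h)$, $Y=\sum_h\epsilon_h$, $Z=\sum_h\epsilon_h h+r$ and $r=\sum_h(e_h\bmod 2)h$. This already gives $R_a\subseteq K$ or $R_a\subseteq c+K$ according to the parity of $a$.

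\textbf{The gap is the integrality constraint on the $u_h$.} You never derive it, and you state it wrongly in two places. With $b_h=2^{e_h-1}u_h$, the affine part of \eqref{c2c2c6:affine-test} sees only $b_h\bmod 2$. So it merely forces $A=\{h:e_h=1\}$ to be empty, an affine plane, or everything. The residues $u_h\bmod 4$ enter only through $\sum_h b_h\equiv0\pmod4$. That condition is the single parity equation $\sum_{h\in A}\epsilon_h\equiv t\pmod 2$, where $t=\#\{h:e_h=2\}$.

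\emph{At $a=8$.} The condition is $\sum_h u_h\equiv0\pmod4$, not modulo $8$. Your first computation there is therefore correct; the modulo-$8$ worry is not. Under a modulo-$8$ condition the class $\alpha$ would never occur: seven entries $1$ and one entry $13$ give the sum $20$, which is allowed modulo $4$ but not modulo $8$. Your proposed swap $1\leftrightarrow13$ at two positions would not help, since it does not change the $\alpha$-parity.

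\emph{At $a\ge12$.} There is no requirement that $\epsilon|_A$ be the indicator of an affine subset. If that condition replaced the parity equation, then at $a=13$ you could take $\epsilon$ to be the indicator of the position $h_3$ carrying exponent $3$. This gives $(Y,Z)=(1,0)$, and the exclusion you must prove disappears. Your $a=13$ paragraph switches to the modulo-four sum condition, so the proposal is also internally inconsistent.

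\textbf{The missing step is to write the parity equation in moment coordinates.} Write $A=\{\ell=s\}$ with $\ell\ne0$ linear, and put $T=\sum_h\epsilon_h h$. Then $\sum_{h\in A}\epsilon_h=\ell(T)+(1+s)Y$, so the constraint becomes $(1+s)Y+\ell(Z)=t+\ell(r)$. Apart from this, $(Y,T)$ is arbitrary, because the columns $(1,h)$ at four affinely independent $h$ are independent, and $X$ is free.
\begin{itemize}
\item \emph{Surjectivity at $12,14,15$.} Use the outside patterns $(2,2,2,2)$, $(3,3,2,2)$, $(3,4,2,2)$ with $s=1$. The right side vanishes and $Y$ drops out. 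A nonzero $\ell$ with $\ell(Z)=0$ then realizes all of $K$ at $a=12,14$ and all of $c+K$ at $a=15$.
\item \emph{Larger totals.} Raise an exponent that is at least $3$ by $2$. Raising an exponent equal to $2$ would change $t$, so the off-plane exponents are not ``chosen freely.''
\item \emph{Exclusion at $13$.} The pattern $(3,2,2,2)$ is forced and the right side equals $s$. A suitable $(\ell,s)$ exists unless $(Y,Z)=(1,0)$. That case is exactly $\delta+\langle\alpha\rangle$, which already lies in $c+K$; it is not the translate of $\delta+\langle\alpha\rangle$ by $c$.
\end{itemize}

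Brute-force enumeration at $a=13$ would be an acceptable finite check only if run against this equation. As written, the proposal proves neither surjectivity for $a\ge12$ nor the exclusion at $a=13$.
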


\begin{proof}
For a positive profile $e$, write $u_h=13^{x_h}19^{y_h}$ with $x_h,y_h\in\F_2$.
With $A=\{h:e_h=1\}$ and $t=\#\{h:e_h=2\}$,
\eqref{c2c2c6:affine-test} becomes
\begin{equation}\label{c2c2c6:rational-parity}
 \sum_{h\in A}y_h=t\pmod2.
\end{equation}
Set $E=\sum_he_h\bmod2$, $X=\sum_hx_h$, $Y=\sum_hy_h$,
$T=\sum_hy_hh$, $r=\sum_h(e_h\bmod2)h$, and $Z=T+r$.
The normalized class is
\begin{equation}\label{c2c2c6:rational-moment-class}
 Ec+X\alpha+Y(c+\delta)+L(Z).
\end{equation}
The four moment columns at $0,e_0,e_1,e_2$ are independent, so $(Y,T)$
is arbitrary; $X$ is free.
Even totals therefore lie in $K$ and odd totals in $c+K$.
At total eight, $Y=r=0$, giving $U$.

For $A=\{\ell(h)=s\}$, $\ell\ne0$, the parity equation reads
$(1+s)Y+\ell(Z)=t+\ell(r)$.
The outside exponent patterns
\[
\begin{array}{c|cccc}
a&12&13&14&15\\ \hline
(e_h)_{h\notin A}&(2,2,2,2)&(3,2,2,2)&(3,3,2,2)&(3,4,2,2)\\
t+\ell(r)&0&s&0&1+s
\end{array}
\]
realize every moment at totals $12,14,15$: choose $s=1$ and a nonzero
$\ell$ annihilating $Z$.
At total thirteen the displayed pattern is forced.
Its plane union omits precisely $(Y,Z)=(1,0)$, hence the two stated classes.
The total-fourteen and total-fifteen profiles contain an exponent at least
three; any even increase preserves its zero residue modulo eight and
normalizing parity at three.
Lemma~\ref{c2c2c6:local-valuations} settles the other totals.
\end{proof}

Solving the moments supplies an anchor.
Lemma~\ref{lem:unit-correction} then lifts the prescribed cofactors,
preserving their absolute rational norms, quadratic norms and total sign.
\paragraph{Quadratic profile classes.}
For a quadratic profile $f$ of total at most fifteen, put $L_f=\{h:f_h=1\}$ and $t_f=\#\{h:f_h=2\}$.
The set $L_f$ is nonempty.
Among the $256$ affine maps $c\colon\F_2^3\to\F_4$, retain those whose support is $L_f$.
For each map, use the Teichm\"uller lift $\tau(c_h)\in\mu_3$ at its nonzero positions.
At positions with $f_h=2$, choose nonzero residues $d_h\in\F_4$
and use the phase $\tau(d_h)$; at positions with $f_h\ge3$, use phase one.
Writing the principal parts of the cofactors modulo four as $1+2v_h$, the single remaining equation is
\begin{equation}\label{c2c2c6:quadratic-linear-equation}
 \sum_{h\in L_f}c_hv_h=\sum_{f_h=2}d_h\quad\text{in }\F_4.
\end{equation}
To obtain it, the sum of the low Teichm\"uller lifts is zero modulo four: each nonzero affine value occurs four or eight times, or all three nonzero values occur twice.
The right side of \eqref{c2c2c6:quadratic-linear-equation} ranges over $\{0\}$, $\F_4^\times$, or $\F_4$, according as $t_f$ is zero, one, or at least two.

The full inverse image of this binary linear equation is essential.
In each component lift the binary basis of $\F_4$ by $3$ and $1+2\omega$, and generate the kernel of reduction modulo four by $5$ and $1+4\omega$.
Give each lift three-residue one by the Chinese remainder theorem.
Choose a pivot in $L_f$.
Solving for its two binary coordinates gives a basis of fourteen vectors for the homogeneous equation on $\F_4^8$.
Lift these vectors componentwise and adjoin all sixteen component generators $5,1+4\omega$.
They generate the entire inverse image of the homogeneous equation, since
\[
 (1+2O/8O)^8/(1+4O/8O)^8\simeq(\F_4^8,+).
\]
For each allowed right side, multiply by one particular lifted solution.
The Teichm\"uller phases do not alter the $H$-class, since they are global units; they are retained to construct the raw anchor satisfying \eqref{c2c2c6:quadratic-linear-equation}.
Finally translate by $\sum_{f_h\text{ odd}}\theta_{h,3}(17)$.
This gives exactly $Q_f$, with a representative for every resulting class.

For the quadratic calculation, reduce each exponent at least five by two
until all entries lie in $\{1,2,3,4\}$. This preserves the raw residues
modulo eight, the positions of exponents one and two, and the exponent
parities. Conversely, an entry three or four absorbs any even excess;
a profile containing only one and two contributes only at its exact total.
Thus these $4^8$ primitive profiles give every required $Q_r$.

\subsection{Support shifts and assembly}
\label{c2c2c6:support-rule}

The identity $\lambda^4=9\omega^2$ implies $\eta_h(y+4)=\eta_h(y)$: its extra inverse residue modulo eight is the global unit $\omega$, whose residue modulo $\lambda$ is one.
Also $2d_h=2\kappa_h=0$.
For each of the eight positions, list the seventeen options
\[
 (x,y,z)=(0,0,0),\qquad
 x\in\{1,2\},\quad y\in\{1,2,3,4\},\quad z\in\{0,1\}.
\]
The nonempty options have cost $x+y$ and shift $(x\bmod2)d_h+\eta_h(y)+z\kappa_h$.
Successively add the eight lists.
For each state, cost parity, and flag indicating nonempty support, retain a word of least cost.
If $M(s,p)$ is the least cost for a nonempty final word, then
\begin{equation}\label{c2c2c6:S-minimum}
 S_b=\{s:M(s,b\bmod2)\le b\}\quad(b>0).
\end{equation}
The recurrence is exact by induction on the positions: appending the same option cannot make a larger previous cost preferable.
Reduction of the positive exponents proves necessity of \eqref{c2c2c6:S-minimum}.
For sufficiency, add the even difference to one positive $x$.
The empty word is treated separately and cannot be padded.
Every primitive word costs at most $48$, so this is a fixed finite calculation for all $b$.

\begin{proposition}[Anchor assembly]\label{c2c2c6:assembly}
Let $8\le k\le31$ be an allowed $2$-adic valuation and let $b\ge0$.
A supplied tuple $C=(u_h,\gamma_h)$ of integral components with norms prime to six can be completed by powers of $2$, $3$, and $\lambda$, and a unit in $E^+$, precisely when
$\theta(C)\in C_{k,b}$.
The resulting determinant is $2^k3^b\prod_hu_h\Norm(\gamma_h)$.
\end{proposition}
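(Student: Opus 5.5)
We prove Proposition~\ref{c2c2c6:assembly} by applying Lemma~\ref{lem:unit-correction}, and then computing $h(Y_r)$ through the Chinese remainder identity~\eqref{eq:crt-assembly}. The raw factor $r$ runs over the tuples whose entries are $2^{e_h}3^{x_h}$ in the rational positions and $2^{f_h}\lambda^{y_h}$ in the Eisenstein positions.

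First we fix the shape of the raw factors. Suppose a completed tuple lies in $\Z[G]$ with determinant $2^k3^b\prod_hu_h\Norm(\gamma_h)$. At two, Lemma~\ref{c2c2c6:local-valuations} shows that the rational valuations $e$ form a profile of total $a$ and the Eisenstein valuations $f$ form a profile of total $r$, with $a+2r=k$ and $a,r$ in the stated ranges. At three, the paired congruence $a_h\equiv\beta_h\pmod\lambda$ forces $x_h,y_h$ to be both zero or both positive. Since class number one holds, every completing tuple is, up to a global unit, $r\cdot C$ for such an $r$ multiplied by an element of $M^\times$. The signed units are absorbed into $E^+$, because the total sign is prescribed by $C$. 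Hence the admissible classes form the union over these $r$ of $h(Y_r)$, and Lemma~\ref{lem:unit-correction} gives both directions.

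Next we split $Y_r$ by~\eqref{eq:crt-assembly} into a two-primary part and a three-primary part. The two-primary part further splits by position type.

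The rational part $r_2$ consists of the $2^{e_h}$. Its compatible unit residues at two, shifted by $r_{2,3}^{-1}=2^{-e_h}$ at three, are exactly~\eqref{c2c2c6:R-definition}. To see this, use the local order units, which include $z$, the $1+3b$ generators, and the sign-change polynomials $1+8J_h$. These normalize the Eisenstein residues to one at two and the three-residues to those of $2^{-e_h}$. The last step uses $2^{-1}\equiv 2$, $2^{2}\equiv1$ and $17\equiv 2\pmod 3$, which gives the factor $\gamma_h=17^{e_h\bmod 2}$. The rational unit residues then reduce to $\{1,7,13,19\}$ modulo $24$ after fixing the three-residue.

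In the same way, the Eisenstein profile gives~\eqref{c2c2c6:Q-definition}. Because the $\C{2}^3$ Walsh condition decouples rational and Eisenstein positions, the two contributions add, giving $R_a+Q_r$.

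At three, each occupied pair contributes a lower inverse $3^{-x_h}$ at two, whose class is $(x_h\bmod2)d_h$ since $3^{-1}\equiv 19$ modulo $8$ in the relevant residue. It also contributes an upper inverse, which is $\eta_h(y_h)$. Finally, there is a free ratio in $\F_3^\times$ at three, which is the $z_h\kappa_h$ term. This reproduces the shift~\eqref{c2c2c6:support-shift}, and so $S_b$.

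The main obstacle is the exactness of each local step, namely that \emph{every} compatible cofactor residue is captured rather than just one anchor. I would handle this as in Proposition~\ref{cy24:cofactor}. Two compatible seeded residues for the same $r$ differ by an element of $L$. This holds at two because the ratio satisfies the Walsh congruence with unit entries. It holds at three because the nonzero paired residues agree once the occupied ratio is fixed. So each compatible set is a single $L$-coset, shifted by an explicit anchor.

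It then remains to realize every profile. Lemma~\ref{c2c2c6:local-valuations}, Proposition~\ref{c2c2c6:rational-cosets}, the quadratic linear equation~\eqref{c2c2c6:quadratic-linear-equation} and the padding rule supply anchors for all totals in the allowed ranges with $k\le31$. Assembling these gives $\theta(C)\in C_{k,b}$ exactly. The determinant formula follows because unit corrections in $E^+$ preserve the signed norm product.
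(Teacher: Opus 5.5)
Your outline follows the paper's proof. The Walsh conditions at two split into rational and Eisenstein blocks, and paired order units at three normalize these blocks to the fibres \eqref{c2c2c6:R-definition} and \eqref{c2c2c6:Q-definition}. The opposite-prime inverses $3^{-x_h},\lambda^{-y_h}$ and the free occupied ratios give \eqref{c2c2c6:support-shift}, and everything is assembled by \eqref{eq:crt-assembly}.

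The gap is in the paragraph you call the main obstacle. At two, two cofactor residues compatible with the same raw factor need \emph{not} differ by an element of $L$. Multiplication by $2^{e_h}$ only retains $u_h$ modulo $2^{3-e_h}$, so the ratio need not satisfy the Walsh congruence modulo $8$. Take the raw factor $2$ at all eight rational positions, and take $u=(1,\dots,1)$ and $u'=(1,\dots,1,13)$ with the normalized three-residues of \eqref{c2c2c6:R-definition}. Both satisfy $W(2u),W(2u')\in8\Z^8$. But $Wu'\equiv4\pmod 8$ in every coordinate, so $u'/u\notin L$. By \eqref{c2c2c6:rational-component-relations} its class is $\theta_{h,1}(13)=\alpha\ne0$ for $h=(1,1,1)$. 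In fact this single total-eight profile produces all sixteen classes of $R_8=U$. So one anchor plus one $L$-coset per profile would undercount $C_{k,b}$.

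The analogy with Proposition~\ref{cy24:cofactor} fails for the same reason. There the single-coset argument is used only for the odd stratum, whose raw factor is a unit at two. The two-critical profiles are enumerated exhaustively, and some of them give two classes.

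Exactness at two is closed by the observation you made earlier. $R_e$ and $Q_f$ are defined as the classes of \emph{all} compatible normalized residues, so the block splitting and the paired normalization give $B_2=R_e+Q_f$ directly, with no anchor argument. A coset argument is legitimate only at three, and only after fixing the binary ratios $z_h$. That is where the paper uses it: multiplying the cofactor two-parts by $3^{x_h}$ and $\lambda^{y_h}$ and resetting the field three-residue to $2^{e_h-f_h}$ gives a bijection that shifts the class by exactly \eqref{c2c2c6:support-shift}.

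Finally, you do not need to realize profiles through Lemma~\ref{c2c2c6:local-valuations} and Proposition~\ref{c2c2c6:rational-cosets} for this proposition. $C_{k,b}$ is already the union of these complete fibre sets, and unrealizable profiles contribute the empty set.
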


\begin{proof}
At two, the rational and quadratic order factors split independently.
At three, paired order units normalize the rational cofactor residues
to one; the field residues are then $2^{e_h-f_h}=2^{e_h+f_h}$ modulo
$\lambda$. Split this normalized tuple into a rational part with
three-residues $(1,2^{e_h})$ and a quadratic part with three-residues
$(1,2^{f_h})$, placing one in the other two-primary components.
Their classes are exactly $R_e$ and $Q_f$, respectively, and their
products give every compatible two-primary class. A block of total
zero is an order unit and contributes class zero.

Now choose a support $(x_h,y_h)$ and binary choices $z_h$.
If $C_0$ is an unramified cofactor anchor, define a new one by
\begin{equation}\label{c2c2c6:CRT-anchor}
\begin{array}{ll}
 u'_h\equiv3^{-x_h}(C_0)_{h,1}\pmod8,
   &u'_h\equiv1\pmod3,\\
 \gamma'_h\equiv\lambda^{-y_h}(C_0)_{h,3}\pmod{8O},
   &\gamma'_h\equiv2^{e_h-f_h}(-1)^{z_h}\pmod\lambda.
\end{array}
\end{equation}
Here $z_h=0$ at unoccupied positions.
The raw two-parts remain unchanged after multiplication by
$(2^{e_h}3^{x_h},2^{f_h}\lambda^{y_h})$.
The raw three-parts match at unoccupied positions and both vanish at occupied ones.
The class difference is exactly \eqref{c2c2c6:support-shift}.
Conversely, multiplication of the cofactor two-parts by $3^{x_h}$ and
$\lambda^{y_h}$, followed by resetting the field three-residue to
$2^{e_h-f_h}$, removes precisely this shift.
Thus the CRT identity \eqref{eq:crt-assembly}, with these profile and
support classes, gives \eqref{c2c2c6:C-kb}.
Lemma~\ref{lem:unit-correction} corrects the supplied tuple without
changing its signed norm product, and \eqref{c2c2c6:inverse} recovers
the coefficients.
\end{proof}

The complete class sets give
\begin{equation}\label{c2c2c6:full-target-identities}
\begin{aligned}
 R_8+Q_8+S_b&=H &&(b=3,4),\\
 R_a+Q_8+S_b&=H &&(12\le a\le15,\ 2\le b\le4).
\end{aligned}
\end{equation}
The supplement verifies all $4096$ classes in each sumset and retains
anchors for their constructive lifts.

\subsection{Constructions for the two tails}
\label{c2c2c6:tails}

We prove $2^{13}3^bm\in\Sdet{G}$ for $b\ge6$ and signed $m$ prime to six.
Fix $v\ne0$ in $\F_2^3$.
The identities $\lambda^2=-3\omega$ and $\lambda^4=9\omega^2$ give
$\eta_h(2)=\theta_{h,3}(13)$ and $\eta_h(4)=0$ after removing global-unit phases.
Putting $13$ in two field positions and one elsewhere gives an integral
tuple, so $\theta_{h,3}(13)$ is independent of $h$; it is killed by two
since $13^2\equiv1\pmod{8\lambda O}$.
Thus the following supports have the stated shifts:
\begin{equation}\label{c2c2c6:tail13-supports}
\begin{array}{c@{\quad}l@{\quad}l}
b&\text{nonzero }(h;x_h,y_h,z_h)&\text{shift}\\ \midrule
6&(0;1,2,0),\ (v;1,2,0)&L(v)\\
6&(v;2,4,1)&c+L(v)\\
7&(v;3,4,0)&c+\delta+L(v)\\
7&(0;1,2,0),\ (v;2,2,1)&\delta+L(v).
\end{array}
\end{equation}
Since $(\Z/24\Z)^\times=\langle13,11,17\rangle$, the scalar baseline
$c(m)$ lies in $\langle\alpha,\delta,c\rangle$ and has vector moment zero.
For either parity of $b$, choose the row whose $c$-coefficient modulo
$K$ is opposite to that of $c(m)$.
The residual class then has $E=1$ and $Z=v\ne0$, hence belongs to $R_{13}$.
Increase a positive $x_h$ by any even excess.
Proposition~\ref{c2c2c6:assembly} now realizes $2^{13}3^bm$ with
the prescribed signed scalar $m$.

For the high-two family, write $A=2^am$ with $m$ signed and odd, and define
\begin{equation}\label{c2c2c6:high-rational}
 R(A)=
 \begin{cases}
 (4m,4,4,4,2,2,2,2),&a=12,\\
 (-2^{a-10}m,4,4,4,-2,2,2,2),&a\ge13.
 \end{cases}
\end{equation}
Its product is $A$, and its Walsh transform is divisible by eight.
The residues at positions $1,\ldots,7$ are respectively
$(1,1,1,2,2,2,2)$ and $(1,1,1,1,2,2,2)$ modulo three.
Choose $\beta=2\gamma$ from
\begin{equation}\label{c2c2c6:high-quadratic}
\begin{array}{c@{\quad}c@{\quad}l}
 a&\text{support at three}&\gamma\\ \midrule
 12&\varnothing&(2\epsilon,2,-1,-1,\omega,\omega,\omega^2,\omega^2),\\
 a\ge13&\varnothing&(2\epsilon,2\omega,-1,-1,-\omega^2,\omega^2,\omega,\omega),\\
 12&\{0\}&(2\lambda,2\omega^2,-1,-1,\omega,\omega,\omega^2,\omega^2),\\
 a\ge13&\{0\}&(2\lambda,2,-1,-1,-\omega,\omega,\omega^2,\omega^2).
\end{array}
\end{equation}
In the first two rows $\epsilon\in\{1,-1\}$.
Each vector $2\gamma$ has Walsh transform divisible by eight.
Its norm product is $2^{20}$ in the unoccupied rows and $3\cdot2^{20}$ in the occupied rows.
At positions $1,\ldots,7$, its residues modulo $\lambda$ are those of \eqref{c2c2c6:high-rational}; its position-zero residue is $\epsilon$ or zero.

Given $D$ with $v_2(D)\ge32$ and $v_3(D)\ne1$, take
\[
 A=D/2^{20}\quad\text{if }3\nmid D,\qquad
 A=D/(3\cdot2^{20})\quad\text{if }9\mid D.
\]
Then $a\ge12$.
Use the matching row of \eqref{c2c2c6:high-quadratic}, choosing $\epsilon$ to match $R(A)_0$ when needed.
The tuple $(R(A),2\gamma)$ satisfies all the gluing congruences and has determinant $D$.

\subsection{The pure-two baseline classes}\label{c2c2c6:pure-two-data}

Put $c_-=c(-1)=\theta_{0,1}(-1)$.
The profile rules above give the finite intersections used in
Corollary~\ref{cor:pure-two}:
\[
\begin{array}{c@{\qquad}c}
 k&C_{k,0}\cap\{0,c_-\}\\ \midrule
 8,\ 12&\{0\}\\
 13&\varnothing\\
 14\le k\le31,\ k\text{ even}&\{0\}\\
 14\le k\le31,\ k\text{ odd}&\{c_-\}.
\end{array}
\]


\bibliographystyle{amsplain}
\bibliography{references}
\end{document}